  \newcommand{\calC}{\mathcal{C}}
  \newcommand{\calL}{\mathcal{L}}
  \newcommand{\calP}{\mathcal{P}}
  \newcommand{\calX}{\mathcal{X}}
  \newcommand{\FF}{\mathbb{F}}
  \newcommand{\PP}{\mathbb{P}}
  \newcommand{\RR}{\mathbb{R}}
  \newcommand{\ZZ}{\mathbb{Z}}
 \newcommand{\btau}{{\boldsymbol \tau}}
  \newtheorem{theorem}{Theorem}[section]
\newtheorem*{diamtheorem}{Theorem~\ref{diam}}
  \newtheorem*{gentheorem}{Theorem~\ref{gen}}
\newtheorem*{atotheorem}{Theorem~\ref{ato}}
  \newtheorem*{twisttheorem}{Theorem~\ref{twist}}
  \newtheorem{proposition}[theorem]{Proposition}
  \newtheorem{lemma}[theorem]{Lemma}
  \newtheorem*{conjecture*}{Conjecture}
  \theoremstyle{definition}
  \newtheorem{definition}[theorem]{Definition}
  \newtheorem*{claim*}{Claim}
  \newtheorem*{question*}{Question}
  \newtheorem*{answer*}{Answer}
  \newtheorem*{application*}{Application}
  \theoremstyle{remark}
  \newtheorem*{remark*}{Remark}
  \DeclareMathOperator{\diam}{diam}
  \newcommand{\Out}{\ensuremath{\operatorname{Out}}\xspace}
  \newcommand{\Teich}{{Teichm\"uller }}
  \renewcommand{\a}{\alpha}
  \newcommand{\la}{\langle} 
  \newcommand{\ra}{\rangle}
\renewcommand{\a}{\alpha} 
\renewcommand{\t}{\tau}
  \newcommand{\param}{{\mathchoice{\mkern1mu\mbox{\raise2.2pt\hbox{$
  \centerdot$}}
  \mkern1mu}{\mkern1mu\mbox{\raise2.2pt\hbox{$\centerdot$}}\mkern1mu}{
  \mkern1.5mu\centerdot\mkern1.5mu}{\mkern1.5mu\centerdot\mkern1.5mu}}}
  \newcommand{\ttau}{{\widetilde \tau}}
\newcommand{\bS}{{\boldsymbol S}}
\newcommand{\ba}{{\boldsymbol a}}
\begin{document}


\title {Fully irreducible Automorphisms of the Free Group\\
 via Dehn twisting
in $\sharp_k(S^2 \times S^1)$}

  \author   {Funda G\"ultepe}
\address{Department of Mathematics\\
University of Illinois at Urbana-Champaign\\ Urbana, IL 61801}
\email{fgultepe@illinois.edu}
\urladdr{http://www.math.illinois.edu/~fgultepe/}

\begin{abstract} By using a notion of a geometric Dehn twist in $\sharp_k(S^2 \times S^1)$, we prove that when projections of two $\ZZ$-splittings to the free factor complex are far enough from each other in the free factor complex, Dehn twist automorphisms corresponding to the $\ZZ$ splittings generate a free group of rank $2$. Moreover, every element from this free group is either conjugate to a power of one of the Dehn twists or it is a fully irreducible outer automorphism of the free group. We also prove that, when the projections of $\ZZ$--splittings are sufficiently far away from each other in the intersection graph, the group generated by the Dehn twists have automorphisms either that are conjugate to Dehn twists or are atoroidal fully irreducibles.
\end{abstract}

\maketitle
\section{introduction}

Due to their dynamical properties, \textit{fully irreducible} outer automorphisms are important to understand the dynamics and geometric structure of $\Out(F_k)$ and its subgroups.(\cite{LevLust1}, \cite{CLAYP}, \cite{bestclay1}). Just like pseudo Anosov surface homeomorphisms, fully irreducibles are characterized to be the class of automorphisms no power of which fixes a conjugacy class of a proper free factor of $F_k$. Since their dynamical properties and their role in $\Out(F_k)$ is similar to those of pseudo Anosov mapping classes for the mapping class group, to construct fully irreducibles it is natural to seek ways similar to those of pseudo Anosov constructions. In this work, we will provide such construction using \textit{Dehn twist automorphisms}, by composing powers of Dehn twists from the free group of rank $2$ that they generate. This is inspired by the work of Thurston on pseudo Anosov mapping classes of mapping class group of a surface (\cite{Thurston1}) yet we use the similar ping pong methods Hamidi-Tehrani uses in his generalization of Thurston's to Dehn twists along multicurves (\cite{HTEH02}).

Constructing fully irreducible automorphisms by composing certain (possibly powers of) other automorphisms and locating free groups is not new to the study of automorphisms of the free group. For instance, Clay and Pettet in \cite{CLAYP} constructed fully irreducibles by composing elements of a free group of rank $2$ which was generated by powers of two Dehn twist automorphisms. However, the powers of the Dehn twists used to generate the free group were not uniform but depended on the twists, one needed to take a different power for each pair of Dehn twists to obtain a free group.

In their work Clay and Pettet studied Dehn twists algebraically, as outer automorphisms of the free group and they used algebraic tools to study them. As a result their construction produced the nonuniform powers of twists. In this paper, our goal is constructing fully irreducible automorphisms by studying Dehn twist automorphisms differently. We first change the model for $\Out(F_k)$ from the $1$--dimensional one to the $3$--dimensional one, $M=\sharp_k(S^2 \times S^1)$. This way, we were able to understand Dehn twists geometrically, using essential imbedded tori in $M$. This approach resulted in a more geometric construction of fully irreducibles.

More specifically, we will prove the following theorem using \textit{geometric Dehn twists}.

\begin{theorem}\label{twist} 
Let $T_1$ and $T_2$ be two $\mathbb{Z}$--splittings of the free group $F_k$ with rank $k>2$ and $\alpha_1$ and $\a_2$ be two corresponding free factors in the free factor complex $FF_k$ of the free group $F_k$. Let $D_{1}$ be a Dehn twist fixing $\alpha_1$ and $D_{2}$ a Dehn twist fixing $\alpha_2$, corresponding to $T_1$ and $T_2$, respectively. Then there exists a constant $N=N(k)$ such that whenever $d_{FF_k}(\alpha_1, \a_2) \geq N$,
\begin{enumerate}
\item $\langle D_{1}, D_{2}\rangle \simeq F_2$.
\item All elements of $\langle D_{1}, D_{2}\rangle$ which are not conjugate to the powers of $D_{1}, D_{2}$ in $\langle D_{1}, D_{2}\rangle$  are fully irreducible.
\end{enumerate}

\end{theorem}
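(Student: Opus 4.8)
The plan is to run a ping-pong argument for the action of $\langle D_1,D_2\rangle$ on the free factor complex $FF_k$. Two facts of Bestvina and Feighn form the backbone: $FF_k$ is Gromov hyperbolic, and an element of $\Out(F_k)$ is fully irreducible precisely when it acts loxodromically on $FF_k$. Write $\alpha_i$ for the vertex of $FF_k$ fixed by the geometric Dehn twist $D_i$ (it is fixed because $D_i$ preserves the $\ZZ$-splitting $T_i$ and hence the free factors it carries). The hypothesis $d_{FF_k}(\alpha_1,\alpha_2)\ge N$ will be used to make the ping-pong regions around $\alpha_1$ and $\alpha_2$ disjoint and, more importantly, to make the relevant projections robust \emph{uniformly in the exponents} of the twists; this uniformity is exactly the point where the argument improves on the algebraic one of Clay and Pettet, in which the required power of each twist depended on the pair.

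The geometric input I would isolate is a coarse projection $\pi_i$ associated to $T_i$, defined on free factors in general position with respect to $T_i$, together with two properties proved using the model $M=\sharp_k(S^2\times S^1)$ and the fact that $D_i$ is realized by a diffeomorphism of $M$ supported near the essential torus $\mathcal{T}_i$ dual to $T_i$. First, equivariance and displacement: $D_i$ advances $\pi_i$ by a definite amount, and, crucially, the amount by which $D_i^n$ (for $n\ne 0$) moves a free factor $\beta$ is governed by how much $\beta$ runs across $\mathcal{T}_i$, which is large exactly when $\beta$ is $FF_k$-far from $\alpha_i$. Second, a Behrstock-type inequality coming from hyperbolicity: if $d_{FF_k}(\alpha_1,\alpha_2)$ is large then $\pi_1(\alpha_2)$ and $\pi_2(\alpha_1)$ have bounded diameter, and any $FF_k$-geodesic that stays far from $\alpha_i$ has $\pi_i$-image of bounded diameter (bounded geodesic image). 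Establishing the first property with constants depending only on $k$, and in particular with the displacement estimate valid for \emph{every} nonzero power, is in my estimation the main obstacle; this is precisely where the three-dimensional model is indispensable, since it is what lets one see that $D_i$ only disturbs the part of a free factor that genuinely crosses $\mathcal{T}_i$.

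Granting this, fix a basepoint, choose $R=R(k)$ larger than all the bounds above, and then choose $N=N(k)$ so large that, whenever $d_{FF_k}(\alpha_1,\alpha_2)\ge N$, the cone neighborhoods $X_1$ of $\alpha_1$ and $X_2$ of $\alpha_2$ (free factors $\beta$ for which a geodesic from the basepoint to $\beta$ passes within $R$ of $\alpha_i$) are disjoint and the bounded-geodesic-image constant is comfortably beaten. A now-standard computation in the hyperbolic space $FF_k$ then yields, for every $n\ne 0$, the inclusions $D_1^n(FF_k\setminus X_1)\subseteq X_1$ and $D_2^n(FF_k\setminus X_2)\subseteq X_2$: a free factor $\beta\notin X_1$ is far from $\alpha_1$, so the Behrstock inequality bounds the $\pi_1$-projection of the geodesic $[\beta,\alpha_1]$ while $\beta$ crosses $\mathcal{T}_1$ a great deal; the displacement estimate then forces $D_1^n\beta$ into $X_1$, and symmetrically with the indices reversed. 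The classical ping-pong lemma now gives $\langle D_1,D_2\rangle\cong F_2$, which is (1).

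For (2), let $w\in\langle D_1,D_2\rangle$ be an element that is not conjugate inside $\langle D_1,D_2\rangle$ to a power of $D_1$ or of $D_2$; equivalently, its cyclic reduction is a nonempty word alternating between nonzero powers of $D_1$ and of $D_2$. Iterating the ping-pong inclusions along a basepoint $x_0\in FF_k\setminus(X_1\cup X_2)$ shows that the orbit $\{w^m x_0\}_{m\in\ZZ}$ visits $X_1$ and $X_2$ alternately and advances a definite $FF_k$-distance in each syllable; hence $w$ has positive stable translation length on $FF_k$ and is loxodromic (alternatively, one builds a bi-infinite broken geodesic through translates of $\alpha_1$ and $\alpha_2$ and verifies it is a quasi-geodesic using the largeness of $N$). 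By the Bestvina--Feighn characterization, $w$ is a fully irreducible outer automorphism of $F_k$. Since the excluded elements are exactly those conjugate to powers of $D_1$ or $D_2$, this proves (2).
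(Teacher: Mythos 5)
Your proposal correctly identifies the overall framework (hyperbolicity of $FF_k$, Bestvina--Feighn characterization of fully irreducibles as loxodromics, a ping-pong argument driven by the 3-manifold model), but it hinges on a tool that is exactly the thing the paper says is unavailable, and your hedge around this point is where the argument has a genuine gap.

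You propose a coarse projection $\pi_i$ associated to $T_i$ and then invoke the two standard consequences of such a projection: a Behrstock-type inequality and the Bounded Geodesic Image Theorem, together with a displacement estimate ``$D_i^n$ advances $\pi_i$ by a definite amount uniformly in $n$.'' This is the subfactor-projection template (Bestvina--Feighn, Sapir--Taylor), and you candidly flag that establishing it with $k$-only constants, valid for \emph{every} nonzero power, is ``the main obstacle'' and ``precisely where the three-dimensional model is indispensable.'' The difficulty is that the relevant projection would be a projection \emph{relative to the core curve $a$ of the torus}, i.e.\ an annulus-projection analogue for a rank-one subgroup, and the paper explicitly records that no such thing exists: subfactor projection is not defined for rank-one free factors, so there is no $\pi_i$, no Behrstock inequality, and no BGIT in this setting. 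You cannot plug the hole by asserting the 3-manifold model furnishes it; the model is used for something else.

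What the paper actually does is avoid the projection entirely. It replaces the ``$\pi_i$-image has large diameter'' step by the \emph{relative twisting number} $tw_a(\cdot,\cdot)$ of Clay--Pettet, defined via the Guirardel core, and reinterprets it in $\widetilde M$ in terms of separating sets of ends. The geometric content of the 3-manifold model is Theorem~\ref{twistlemma}: $tw_\tau(G, D_\tau^n(G)) \ge n-1$, proved by tracking ends of $\widetilde M$ across lifts of the normal torus. Large relative twisting is then fed into the Clay--Pettet folding-path theorem (Theorem~\ref{clayp2}), which produces a folding path from $G$ to $D_T^n(G)$ on which $a$ becomes short; the Bestvina--Feighn lemma converts a short loop into closeness to $\alpha$ in $FF_k$; and, since projected folding paths are uniform quasigeodesics, this bounds the diameter of the almost fixed set $F_C$ (Theorem~\ref{diam}). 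Finally the ping-pong itself is not the Schottky ``cone-neighborhood'' version you sketch, where one needs $D_i^n(FF_k\setminus X_i)\subseteq X_i$; it is the Kapovich--Weidmann ``elliptic-type'' ping-pong (Theorem~\ref{gen}), which works directly with quasi-projections onto the quasiconvex almost fixed sets and shows that the concatenated path of bridge segments is a local, hence global, quasigeodesic. This formulation only needs boundedness of the almost fixed sets and their mutual separation, which is exactly what Theorem~\ref{diam} delivers and what the nonexistent projection would have given you in a stronger form. So the substantive missing step in your write-up is the replacement of the projection/Behrstock/BGIT package by the relative-twisting-plus-folding-path package; without that substitution the argument does not close.
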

Now we would like to give the definitions necessary to understand the statement of the Theorem \ref{twist} and explain the ideas used in its proof.

\medskip \noindent {\bf Splittings and $\Out(F_k)$ Complexes:}

 A \textit{Dehn twist automorphism} is an element of $\Out(F_k)$ defined by using $\mathbb{Z}$--splittings of $F_k$ either as an amalgamated free product $F_k=A\ast_{\la c \ra}B$ or as an HNN extension of the free group $F_k= A\ast_{\la c \ra}$. More precisely, it is induced by the following automorphisms in each case:
\begin{align*}
  A*_{\la c \ra} B:& \, a \mapsto a \text{ for } a \in A & \quad
  A*_{\la tct^{-1} = c' \ra}:& \, a \mapsto a \text{ for } a \in A \\
  & \, b \mapsto cbc^{-1} \text{ for } b \in B & & \, t \mapsto tc
\end{align*}
Given a $\mathbb{Z}$--splitting of $F_k$ as $F_k=A_1\ast_{\la c_1 \ra}B_1$ at least one of $A_1$, $B_1$ is a proper free factor. In HNN extension case $F_k= A_1\ast_{\la c_1 \ra}$ the stable letter is a proper free factor. By Bass--Serre theory, each $\mathbb{Z}$--splitting of $F_k$ gives rise to a tree whose quotient with respect to the action of the free group is a single edge. The edge stabilizer is $\mathbb{Z}$ and  vertex stabilizers in the amalgamated case are $A_1$ and $B_1$ while in the HNN case it is $A_1$. We will coarsely \textit{project} each splitting onto the vertex which is a proper free factor. In the amalgamated case we consider the Dehn twist automorphism corresponding to the $\mathbb{Z}$--splitting which fixes this free factor and in the HNN case the Dehn twist automorphism will be the one fixing the vertex stabilizer. We study the action of this Dehn twist on the \textit{free factor complex} $FF_k$ of the free group $F_k$ of rank $k$ and we determine under which conditions the compositions of the Dehn twist automorphisms give fully irreducible automorphisms. The free factor complex is a simplicial complex whose vertices are conjugacy classes of proper free factors of $F_k$ and the adjacency between two vertices corresponding to two free factors $A$ and $B$ is given whenever $A < B$ or $B< A$. This complex was first introduced by Hatcher and Vogtmann in \cite{H3} as a \textit{curve complex} analog for $\Out(F_k)$ and in this work we will use its geometric properties due to its hyperbolicity, which is given in \cite{BestHyp} by Bestvina and Handel.

There are several geometrically distinct hyperbolic simplicial complices $\Out(F_k)$ acts on by simplicial automorphisms which are considered to be analogs to the curve complex for the mapping class group. Contrary to the case with the curve complex and the action of the mapping class group on it, it is not always possible to identify fully irreducible elements with respect to the way they act on a curve complex analog. For example, an element of $\Out(F_k)$ might act hyperbolically on a curve complex analog yet it may not be fully irreducible. In this work the free factor complex was used since loxodromic action of an automorphism on free factor complex completely characterizes being fully irreducible for a free group automorphism. Thus, to identify fully irreducibles in a group generated by two Dehn twists, it is enough to have a loxodromic action.  

As \textit{geometric} Dehn twist we mean the following. For each equivalence class of a $\ZZ$--splitting, by Lemma \ref{CR}, there is an associated homotopy class of a torus in $M$. More specifically, an amalgamated free product gives a separating torus in $M$ whereas an HNN extension corresponds to a non-separating torus. Hence, each Dehn twist automorphism corresponds to a Dehn twist along the torus given by the $\ZZ$--splitting. The Dehn twist along a torus will be called a geometric Dehn twist.

\medskip \noindent {\bf Dehn twists and their almost fixed sets:}
To prove the main theorem we use a \textit{ping pong argument for elliptic type subgroup} since Dehn twists have fixed points in the free factor complex. To define such argument one needs to construct so called ping pong sets. Thus we need to know first that the points of the free factor complex which are not moved too far away by a power of a Dehn twist are manageable. More precisely, let $\phi\in \Out(F_k)$ and let
\[F_C(\phi)= \{x \in FF_k: \exists n \neq 0\,\,\text{such that}\,\, d(x, \phi^n(x)) \leq C \}
 \]
be its almost fixed set in $FF_k$.
The following theorem is the main ingredient in the elliptic type ping pong argument.

\begin{theorem} \label{diam} Let $T$ be a $\mathbb{Z}$--splitting of the free group $F_k$ with $k >2$,  and $D_{T}$ denote a corresponding Dehn twist. Then, for all sufficiently large constants $C$, there exists a $C'=C'(C,k)$ such that the diameter of the almost fixed set
\[
  F_C= \{x \in FF_k: \exists n \neq 0\,\, \text{such that}\,\,d(x, D_{T}^n(x)) \leq C \}
\]
corresponding to $ \langle D_{T} \rangle$ is bounded above by $C'$.
\end{theorem}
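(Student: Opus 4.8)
The plan is to pass from the free factor complex $FF_k$ to a more rigid object on which the Dehn twist $D_T$ acts with a visible fixed structure, namely the $\ZZ$-splitting $T$ itself (equivalently, the associated torus in $M$, or the Bass--Serre tree), and then use a distance-estimate/projection argument to show that almost-fixed points cluster. Concretely, first I would recall that $D_T$ fixes the conjugacy class of the vertex group that is a proper free factor, call it $\alpha$; more generally $D_T$ fixes (up to conjugacy) every free factor carried by a vertex group of $T$, and also fixes the cyclic edge group $\la c\ra$. The key geometric fact to invoke is that the subsurface-projection-style machinery for free groups: there is a coarsely Lipschitz projection $\pi_T$ from (most of) $FF_k$ to some geometry associated to the splitting $T$ — here one wants the ``intersection number'' or the distance in a relative factor complex / the arc-and-curve-type complex of the splitting — with respect to which $D_T$ acts by a large translation on any free factor that has nontrivial projection and is ``transverse'' to $T$. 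This is the analogue of: a Dehn twist $\tau_c^n$ along a curve $c$ moves the $Y$-subsurface projection of any curve hitting $c$ by roughly $|n|$.

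The main steps, in order, would be: (1) Identify the ``bad'' locus $B(T)\subseteq FF_k$ of free factors $x$ that fail to have nontrivial transverse data with respect to $T$ (those carried by a vertex group, roughly), and observe that $D_T$ fixes $B(T)$ pointwise up to conjugacy, so $B(T)\subseteq F_C$ automatically; the real content is bounding $\diam_{FF_k} B(T)$ and controlling points just outside it. (2) For $x\notin B(T)$, show a twisting inequality of the form $d_{W}(x, D_T^n x)\succ |n| - c_0$ for the relevant auxiliary geometry $W$ attached to $T$, using that $D_T$ restricted to the ``$T$-direction'' is an honest power of a twist along the edge curve $c$; this is where one imports the Behrstock-style or Hamidi-Tehrani-style estimate. (3) Combine this with a Lipschitz-retraction (or the Bestvina--Handel distance bound / the bounded geodesic image property for the projection $\pi_T$) to convert a bound $d_{FF_k}(x,D_T^n x)\le C$ into a bound on $d_W(x, D_T^n x)$, hence into a bound on $|n|$, hence — rerunning step (2) — into the statement that $\pi_T(x)$ lies in a set of diameter depending only on $C$ and $k$. (4) Finally, use hyperbolicity of $FF_k$ together with the fact that the fibers of $\pi_T$ over bounded sets have bounded diameter ``in the direction transverse to $T$'' (equivalently: $F_C$ cannot fellow-travel a long geodesic, since along such a geodesic $\pi_T$ changes a lot by the contraction property) to conclude $\diam_{FF_k} F_C \le C'(C,k)$. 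A clean way to package (3)--(4) is the standard contracting-orbit argument: if $F_C$ had large diameter it would contain two points $x,y$ far apart, each with $d(x,D_T^{n_x}x), d(y,D_T^{n_y}y)\le C$; applying large powers of $D_T$ to a geodesic $[x,y]$ and using that $D_T$ acts with a contracting/translating behavior transverse to its fixed set forces a contradiction with $\delta$-hyperbolicity.

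I would expect the main obstacle to be step (2)–(3): making precise the ``auxiliary geometry $W$ attached to the $\ZZ$-splitting $T$'' and proving the twisting lower bound $d_W(x,D_T^n x)\succ|n|$ together with a bounded-geodesic-image statement relating $d_W$ to $d_{FF_k}$. In the surface setting this is exactly Behrstock's inequality plus the fact that $d_Y(\tau_c^n x, \tau_c^n y)$ grows linearly in $n$; for free groups one needs the analogous technology for vertex/edge groups of a splitting — either via the Cayley graph / Whitehead graph model, via the action on the associated tree and the ``twisting number'' of a free factor around the edge, or by pushing everything into the $3$-manifold $M=\sharp_k(S^2\times S^1)$ and using the torus dual to $T$ together with an annular-projection analogue. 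Whichever model is chosen, the crux is a uniform (independent of the twisting power) bound of the form: if $d_{FF_k}(x, D_T^n x)\le C$ then the ``$T$-transverse complexity'' of $x$ is at most $f(C)$; granting that, the diameter bound follows from hyperbolicity by a routine argument, with $C'$ depending only on $\delta$, on $C$, and on the coarse-Lipschitz constants, all of which depend only on $k$.
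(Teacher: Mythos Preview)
Your outline has a genuine gap, and it is precisely the one you yourself flag as ``the main obstacle'': the auxiliary geometry $W$ attached to the $\ZZ$--splitting $T$, together with a coarse projection $\pi_T\colon FF_k\to W$ satisfying a Behrstock-type inequality and a bounded geodesic image property, is not available. The edge group of $T$ is rank $1$, and the subfactor projection machinery of Bestvina--Feighn and Sabalka--Savchuk is explicitly undefined for rank-$1$ free factors; there is no known annular-complex analogue in the free group setting. Your proposal lists this as something to be ``made precise'' or ``imported,'' but in fact it cannot be imported, and the paper says so in the introduction. Without it, steps (2)--(4) of your plan have no content: you have no object $W$, no inequality $d_W(x,D_T^n x)\succ |n|$, and no way to convert $d_{FF_k}(x,D_T^n x)\le C$ into control over $x$.

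The paper circumvents this by leaving $FF_k$ entirely and working in outer space $CV_k$. Given $x\in F_C$, one lifts to $G\in CV_k$ with $\pi(G)=x$. The substitute for the missing annular projection is the \emph{relative twisting number} $tw_a(G,D_T^n(G))$ of Clay--Pettet, defined via the Guirardel core (equivalently, via intersections of spheres with a lift of the torus $\tau$ in $\tM$). Theorem~\ref{twistlemma} gives the linear lower bound $tw_a(G,D_T^n(G))\ge n-1$ by a direct geometric argument about ends of $\tM$. One then invokes a theorem of Clay--Pettet: large relative twist forces the existence of a folding path from $G$ to $D_T^n(G)$ along which the edge element $a$ becomes short ($\ell_{G_t}(a)\le 1/(n-3)$). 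A lemma of Bestvina--Feighn converts shortness of $a$ at $G_t$ into a bound $d_{FF_k}(\alpha,\pi(G_t))\le A/(n-3)+B$. Since the projected folding path is an unparametrized quasigeodesic in $FF_k$ of diameter at most $C$, the triangle inequality then pins $x=\pi(G)$ within $A+C+H+13$ of $\alpha$. The whole argument takes place upstairs in $CV_k$ and never needs a projection defined on $FF_k$ itself.
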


\medskip \noindent {\bf Relative twisting and distances along paths:}
Now, to prove that the almost fixed sets of Dehn twists have bounded diameter, one needs to understand distances between points in the free factor complex. However we cannot assume that there is a geodesic between two points in the free factor complex which is appropriate for our calculation purposes since we do not know what these geodesics are. But it is known that the \textit{folding paths} in outer space give rise to geodesics in outer space and their projections to free factor complex are quasigeodesics. To prove the Theorem \ref{diam}, we have proved that there is a folding path whose projection to the free factor complex is at a bounded distance from the given free factor. To achieve this one would need an analog of the \textit{annulus projection} and to be able to calculate distances on an annulus complex. Then using a version of \textit{Bounded Geodesic Image Theorem} of \cite{MM2} one would conclude that whenever the number of twists is more than the universal constant given in this theorem, the quasi geodesic between a point and its twisted image has a vertex which does not intersect the core curve of the annulus. However, we do not have the main tool for the purpose, which is an analog for annulus projection, since the subfactor projection is not defined for free factors of rank one (\cite{BestSF}, \cite{ST1}.)

To calculate distances without using a projection  we refer to a  theorem of Clay and Pettet in \cite{CP2}
in which they give a pairing $tw_a(G,G{'})$ named \textit{relative twisting number}
between two graphs $G, G{'} \in CV_k$ relative to some nontrivial $a\in F_k$,
which is defined by means of the Guirardel core. 
Using this pairing, they obtain a condition on
the graphs $G, G{'} \in CV_k$ that, when satisfied, enables them construct a connecting
geodesic between them, traveling through thin part of $CV_k$.

\medskip \noindent {\bf Relative twisting along tori in $\displaystyle{\sharp_k(S^2 \times S^1)}$:}
We have used the interpretation of \textit{the relative twisting number} pairing $tw_a(G,G{'})$ for two spheres relative to an element of the free group, which is in our case the generator of the core (longitude) curve of a torus. Then the relative twist is a number which calculates distances between two spheres which are intersecting the same torus along its core curve. Mimicking annulus projection, the relative twisting number might be interpreted as number of intersections between \textit{projections} of some spheres in $M$ onto a torus (yet we do not make a formal definition of such a projection). With relative twisting number we were able to calculate a lower bound for the twisting number between a sphere and its Dehn twisted image, relative to a torus hence in some sense relative to a rank-$1$ free factor (related to its core curve). Afterwards, a lemma of \cite{CP2} by Clay and Pettet guarantees the existence of a geodesic between the corresponding points in the outer space along which core curve gets short. Using a Bestvina-Feighn lemma (\cite{BestHyp}) we project this geodesic to the free factor complex and using the distance calculations we show easily that the almost fixed set of a Dehn twist automorphism has a bounded diameter. This completes the preparation for ping pong with elliptic type group as it is given by Kapovich and Weidmann in \cite{KWeid1}. Now we have ping pong sets that we have control over.

The main argument which also finishes the proof of our main result is encoded in the following theorem.
\begin{theorem}\label{gen}
Let $G$ be a group acting on a $\delta$-hyperbolic metric space $X$ by isometries and $\phi_1, \phi_2 \in G$. Suppose $C>100 \delta$ and the almost fixed sets  $\calX_C(\phi_1)$ and $\calX_C(\phi_2)$ of $\langle \phi_1 \rangle$ and $ \langle \phi_2 \rangle$ respectively have diameters bounded above by a constant $C{'}$. Then there exists a constant $C_1$ such that, whenever  $d_X(\calX_C(\phi_1),\calX_C(\phi_2)) \geq C_1$,
\begin{enumerate}
\item $\langle \phi_1, \phi_2 \rangle \simeq F_2$ and,

\item every element of $\langle {\phi_1}, {\phi_2}\rangle $ which is not conjugate to the powers of $\phi_1, \phi_2$ in $\langle {\phi_1}, {\phi_2}\rangle $ acts loxodromically in $X$.
\end{enumerate}

\end{theorem}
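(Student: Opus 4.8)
The plan is to run a ping-pong argument adapted to elements that are \emph{elliptic} rather than loxodromic, in the spirit of Kapovich--Weidmann. Since $\phi_1$ and $\phi_2$ each have an almost fixed set of bounded diameter, the natural ``ping-pong sets'' are not the usual north-south dynamical neighborhoods but rather coarse half-spaces determined by nearest-point projection onto the segment joining $\calX_C(\phi_1)$ and $\calX_C(\phi_2)$. First I would fix a geodesic $[p_1,p_2]$ with $p_i$ a nearest point of $\calX_C(\phi_i)$, set $m$ to be its midpoint, and define $Y_i = \{x \in X : (x \mid p_j)_m \le 10\delta\}$ for $\{i,j\}=\{1,2\}$ — i.e.\ the points whose projection to $[p_1,p_2]$ lies on the $\calX_C(\phi_i)$ side. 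These are disjoint up to a controlled collar, and their union misses only a bounded neighborhood of $m$. The key geometric input is then: if $d_X(\calX_C(\phi_1),\calX_C(\phi_2)) \ge C_1$ for $C_1$ large compared to $C$, $C'$, and $\delta$, then for every nonzero $n$ one has $\phi_i^n(Y_j) \subseteq Y_i$ when $j \ne i$. This is where the hypotheses are used: a point $x \in Y_j$ has a long geodesic to $\calX_C(\phi_i)$ that passes near $m$; if $\phi_i^n(x)$ failed to land in $Y_i$, then $x$ and $\phi_i^n(x)$ would both project near $m$, forcing $x$ (by thinness of triangles and the definition of $F_C$) to be within bounded distance of $\calX_C(\phi_i)$, contradicting $x \in Y_j$ once $C_1$ exceeds that bound. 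I expect the bookkeeping of constants here — tracking how $10\delta$ collars interact with the diameter bound $C'$ and the threshold $C$ defining $F_C$ — to be the main obstacle; everything else is formal.

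Granting this invariance, part (1) follows from the standard ping-pong lemma: any nonempty reduced word $w = \phi_{i_1}^{n_1}\cdots\phi_{i_\ell}^{n_\ell}$ with $\ell \ge 2$ and $i_t \ne i_{t+1}$ maps $Y_{i_\ell}^{c}$ (a base point in the complementary half-space) into $Y_{i_1}$, hence is nontrivial; and a word of length $1$ is a nonzero power of a $\phi_i$, which is nontrivial in $G$ provided $\phi_i$ has infinite order — and it does, since if $\phi_i^n=\mathrm{id}$ then $\calX_C(\phi_i)=X$ has infinite diameter, contradicting the hypothesis. (If one only wants an abstract $F_2$ one should also note $\phi_1,\phi_2$ generate, so $\langle\phi_1,\phi_2\rangle \cong F_2$.) This also shows the only elements that can fail to act loxodromically are, up to conjugacy in $\langle\phi_1,\phi_2\rangle$, the powers of $\phi_1$ and of $\phi_2$: indeed a reduced word that is not conjugate into $\langle\phi_1\rangle$ or $\langle\phi_2\rangle$ is, after cyclic reduction, \emph{cyclically} reduced of length $\ge 2$.

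For part (2), I would promote the ping-pong to a dynamical estimate. Let $w$ be cyclically reduced of syllable length $\ell \ge 2$; pick a basepoint $x_0$ lying deep in $Y_{i_1}$ (say, within $O(\delta)$ of $p_{i_1}$). By the invariance property applied syllable by syllable, $w^k(x_0)$ lies in $Y_{i_1}$ for all $k$, and moreover each application of a full period pushes the projection coordinate strictly forward by at least $d_X(\calX_C(\phi_1),\calX_C(\phi_2)) - O(C') - O(\delta) \ge C_1/2 > 0$. Hence $d_X(x_0, w^k(x_0)) \ge k \cdot (C_1/2) - O(\delta)$, so $w$ has positive translation length and acts loxodromically — here I would cite the standard fact that in a $\delta$-hyperbolic space an isometry with a ``progressing'' orbit along a quasigeodesic axis is loxodromic (e.g.\ via the criterion that $\lim d_X(x_0,w^k x_0)/k > 0$). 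Conjugation by an element of $\langle\phi_1,\phi_2\rangle$ changes neither loxodromicity nor the stated exception class, so the theorem follows. The only subtlety worth checking carefully is that the ``forward push'' does not stall when consecutive syllables involve the \emph{same} index after cyclic permutation — but cyclic reducedness forbids exactly that, so the argument closes.
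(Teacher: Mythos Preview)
Your argument and the paper's are both in the Kapovich--Weidmann ``elliptic ping-pong'' spirit, but the execution differs. The paper does not build half-space ping-pong sets at all: instead it picks nearest points $p_i\in\calX_C(\phi_i)$, shows directly that $d(p_i,\phi_i^n p_i)\ge C>100\delta$ for every $n\neq 0$ (because otherwise a point of $(p_i,p_j]$ just past $p_i$ would land in $F_C(\phi_i)$ and beat $p_i$ as nearest point), and then invokes \lemref{3} to see that each three-piece path $[p_j,p_i]\cup[p_i,\phi_i^n p_i]\cup\phi_i^n[p_i,p_j]$ is a $(1,30\delta)$--quasigeodesic. Concatenating these along a reduced word and applying the local-to-global principle for quasigeodesics yields a single global quasigeodesic, from which both freeness and positive translation length (hence loxodromicity) drop out simultaneously. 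Your route separates these two conclusions and trades the local-to-global lemma for the classical ping-pong lemma plus a displacement estimate; this is a legitimate alternative and arguably more elementary, while the paper's version is shorter and avoids tracking half-spaces.

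One point in your sketch needs tightening. The justification of the key inclusion $\phi_i^n(Y_j)\subseteq Y_i$ is not quite right as written: from ``$x$ and $\phi_i^n(x)$ both project near $m$'' you cannot conclude $d(x,\phi_i^n(x))\le C$, so you do not get $x\in\calX_C(\phi_i)$ that way. What actually drives the inclusion is the displacement bound $d(p_i,\phi_i^n p_i)\ge C$ (the same fact the paper isolates): if $x\in Y_j$ then its nearest-point projection to $\calX_C(\phi_i)$ lies within $O(\delta)$ of $p_i$; by $\phi_i$--equivariance the projection of $\phi_i^n(x)$ lies within $O(\delta)$ of $\phi_i^n(p_i)$; hence if $\phi_i^n(x)$ were still in $Y_j$ its projection would also be $O(\delta)$--close to $p_i$, forcing $d(p_i,\phi_i^n p_i)=O(\delta)<C$, a contradiction. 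Relatedly, your threshold $10\delta$ in the definition of $Y_i$ is too tight---it must absorb $C'$ as well as $\delta$, since $\phi_i^n$ can move $p_i$ by up to $C'$ inside $\calX_C(\phi_i)$. Once you make the displacement lemma explicit and adjust the constant, your argument goes through.
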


Finally, we \textit{project} Dehn twists to \textit{intersection graph} $\calP_k$. Its vertex set consists of marked roses up to equivalence and there are two types of edges between vertices. The first is that whenever two roses share an edge with the same label, corresponding vertices are connected by an edge in $\calP_k$. Second one is obtained whenever there is a marked surface with one boundary component such that the element of the fundamental group represented by the boundary crosses each edge of both roses twice. This simplicial complex is closely related to the \textit{intersection graph} introduced by Kapovich and Lustig (\cite{KaLu2}) and it is hyperbolic. (\cite{MannTh}).

A fully irreducible automorphism is called \textit{geometric} if it is induced by a pseudo-Anosov homeomorphism of a surface with one boundary component. A fully irreducible automorphism is \textit{atoroidal} if no positive power of it preserves the conjugacy class of a nontrivial element of $F_k$. Moreover, only non-geometric fully irreducible automorphisms are atoroidal by a theorem of Bestvina-Handel (\cite{BH1}). The important feature of the intersection graph for us is that the atoroidal fully irreducibles act loxodromically on this graph (\cite{MannTh}).

We obtain the following theorem.

\begin{theorem}\label{ato} Let $T_1$ and $T_2$ be two $\mathbb{Z}$--splittings of $F_k$ with $k>2$ with corresponding free factors $\alpha_1$ and $\alpha_2$ and let  $D_{1}$ and $D_{2}$ be two Dehn twists corresponding to $T_1$ and $T_2$, respectively.  Then there exists a constant $N_2=N_2(k)$ such that whenever $d_{\calP_k}(\sigma(\alpha_1), \sigma(\alpha_2)) \geq N_2$ , $\langle D_{_1}, D_{_2}\rangle \simeq F_2$ and all elements from this group which are not conjugate to the powers of $D_{1}, D_{2}$ in $\langle D_{1}, D_{2}\rangle$  are atoroidal fully irreducible.
\end{theorem}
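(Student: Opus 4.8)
The plan is to re-run the same argument scheme behind Theorem~\ref{twist}, replacing the free factor complex $FF_k$ by the intersection graph $\calP_k$. Since $\calP_k$ is Gromov hyperbolic (\cite{MannTh}), the abstract ping-pong statement Theorem~\ref{gen} applies to the action of $\Out(F_k)$ on $\calP_k$ as soon as its hypotheses are verified for $\phi_1 = D_1$ and $\phi_2 = D_2$: namely that the almost fixed sets $\calX_C(D_1)$ and $\calX_C(D_2)$ in $\calP_k$ have bounded diameter, and that they can be forced far apart. Granting these, Theorem~\ref{gen} gives $\langle D_1, D_2\rangle \simeq F_2$ together with the statement that every element of $\langle D_1, D_2\rangle$ not conjugate to a power of $D_1$ or $D_2$ acts loxodromically on $\calP_k$; the theorem then follows by upgrading ``loxodromic on $\calP_k$'' to ``atoroidal fully irreducible.''

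First I would establish the $\calP_k$-analog of Theorem~\ref{diam}: for a $\ZZ$-splitting $T$ with Dehn twist $D_T$ and associated torus in $M$ (Lemma~\ref{CR}), with core curve generated by $a \in F_k$, the set $\calX_C(D_T) \subset \calP_k$ has diameter bounded by some $C' = C'(C,k)$. The mechanism should be the same as for $FF_k$: use the Clay--Pettet relative twisting number $tw_a$ to bound $d_{\calP_k}$ from below between a marked rose (a point of $CV_k$) and its image under a large power of $D_T$. When $tw_a(G, D_T^n G)$ is large, \cite{CP2} produces a geodesic in $CV_k$ from $G$ to $D_T^n G$ along which $a$ gets very short; projecting this thin geodesic to $\calP_k$ (via a Bestvina--Feighn type projection, \cite{BestHyp}, or directly using that the relative twisting of two spheres relative to the torus counts intersections of their projections onto it, in the spirit of \cite{KaLu2}), one locates on it a rose not carrying $a$, which pins the endpoints near the $D_T$-fixed rose $\sigma(\alpha_T)$. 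Running this over all nonzero powers shows $\calX_C(D_T)$ lies within bounded distance of $\sigma(\alpha_T)$, hence has bounded diameter. I expect this to be the main obstacle: as in the proof of Theorem~\ref{diam}, the ``annulus projection'' onto the core curve is unavailable since the subfactor projection is undefined for rank-one factors (\cite{BestSF}, \cite{ST1}), so the relative-twisting / short-curve substitute must be carried out and the relevant bounded-geodesic-image input proved by hand for $\calP_k$.

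The rest is assembly. The Dehn twist $D_i$ fixes the marked rose $\sigma(\alpha_i)$ up to bounded error for all powers, so $\sigma(\alpha_i) \in \calX_C(D_i)$; since $\diam \calX_C(D_i) \le C'$, the hypothesis $d_{\calP_k}(\sigma(\alpha_1), \sigma(\alpha_2)) \ge N_2$ gives $d_{\calP_k}(\calX_C(D_1), \calX_C(D_2)) \ge N_2 - 2C'$. Fixing $C > 100\delta$ with $\delta = \delta(\calP_k)$, letting $C_1 = C_1(k)$ be the resulting constant of Theorem~\ref{gen}, and choosing $N_2 = N_2(k) \ge C_1 + 2C'$, Theorem~\ref{gen} applies with $X = \calP_k$ and yields $\langle D_1, D_2\rangle \simeq F_2$ together with loxodromic action on $\calP_k$ for every $g \in \langle D_1, D_2\rangle$ not conjugate to a power of $D_1$ or $D_2$.

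Finally I would identify these $g$ as atoroidal fully irreducibles. This uses the converse of the fact quoted from \cite{MannTh}: an element of $\Out(F_k)$ acts loxodromically on $\calP_k$ if and only if it is an atoroidal fully irreducible. Concretely, a reducible outer automorphism preserves a power of a proper free factor system and a geometric fully irreducible preserves a boundary curve, and in either case the corresponding sub-rose or boundary data is coarsely fixed in $\calP_k$, so the orbit is bounded; combined with the Bestvina--Handel dichotomy that the non-geometric fully irreducibles are exactly the atoroidal ones (\cite{BH1}), loxodromicity on $\calP_k$ forces $g$ to be both fully irreducible and atoroidal. Since the $D_i$ are themselves Dehn twists (hence reducible), this is consistent with excluding their powers and conjugates, and the theorem follows.
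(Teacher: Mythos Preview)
Your proposal is correct and follows essentially the same approach as the paper: establish the $\calP_k$-analog of Theorem~\ref{diam} by the same Clay--Pettet/folding-line/short-curve argument (the paper transfers Lemma~\ref{L1} via the Lipschitz map $\sigma\colon FF_k\to\calP_k$, noting that distances in $\calP_k$ are dominated by those in $FF_k$, and that projections of folding paths are quasigeodesics in $\calP_k$ by \cite{KaRa},\cite{MannTh}), then apply Theorem~\ref{gen} with $N_2 = 2\diam\{\calP_C\}+C_1$ and invoke the \cite{MannTh} characterization of loxodromics on $\calP_k$ as atoroidal fully irreducibles.
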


\subsection*{Acknowledgements} I would like to thank Ilya Kapovich for asking the question which led to the writing of this paper. I am especially indebted to Chris Leininger for his constant support and for suggesting a shorter proof for the main theorem. I am very grateful for Matt Clay for giving me permission to use his proof in this paper. Many thanks also to Mark C. Bell for reading the preprint and making many useful suggestions. Lastly I would like to specifically thank the anonymous referee whose suggestions and corrections improved the paper substantially.
\section{Preliminaries}

\subsection{Sphere systems and normal tori}

Let $M=\sharp_k(S^2 \times S^1)$. Then $\Out(F_k)$ is isomorphic to the mapping class group $MCG(M)$ of $M$ up to twists about $2$-spheres in $M$ (\cite{LAUD}). $M$ can be described as follows: we remove the interiors of $2k$ disjoint 3-balls from $S^3$ and identify the resulting 2-sphere boundary components in pairs by orientation-reversing diffeomorphisms, creating $k$ many $S^2\times S^1$ summands.

 Associated to $M$ is a rich algebraic structure coming from the essential $2$-spheres that $M$ contains. A \textit{sphere system} is a collection of isotopy classes of disjoint and non-trivial 2-spheres in $M$ no two of which are isotopic. 


We call a collection $\Sigma$ of disjointly imbedded essential, non-isotopic $2$-spheres in $M$
\textit{a maximal sphere system} if every  complementary component of $\Sigma$ in $M$ is a $3$-punctured $3$-sphere.

A fixed maximal sphere system $\Sigma$ in $M$ gives a description of the universal cover $\widetilde{M}$ of $M$ as follows. Let $\PP$ be the set of  $3$-punctured $3$-spheres in $M$ given by a maximal sphere system $\Sigma$ and regard $M$ as obtained from copies of $P$ in $\PP$ by identifying pairs of boundary spheres. Note that a pair might both be contained in a single $P$, in which case the image of $P$ in $M$ is a once-punctured $S^2\times S^1$. To construct $\widetilde{M}$, begin with a single copy of $P$ and attach copies of the $P$ in $\PP$ inductively along boundary spheres, as determined by unique path lifting. Repeating this process gives a description of $\widetilde{M}$ as a treelike union of copies of the $P$. We remark that $\widetilde{M}$ is homeomorphic to the complement of a Cantor set in $S^3$.

To be able to define a concept of Dehn twist, we need to use the correspondence between the equivalence classes of $\mathbb{Z}$--splittings of $F_k$ and homotopy classes of \textit{essential} tori in $M$, which is given in Lemma \ref{CR}.

For us, a torus in $M$ is an imbedded torus in $M$ so that the image of the fundamental group of torus is a cyclic group isomorphic to $\ZZ$ in $\pi_1(M)$. Moreover, we  consider \textit{essential} tori only, the tori which do not bound a solid torus in $M$. There are two types of essential tori in $M$, depending on the type of the splitting of the free group they correspond to. Namely, for an amalgamated free product we have a separating torus in $M$ and a non-separating one for an HNN extension of the free group. Two examples could be seen in the Figure \ref{fig:MT}.
\begin{figure}
\centering{\includegraphics[scale=0.50]{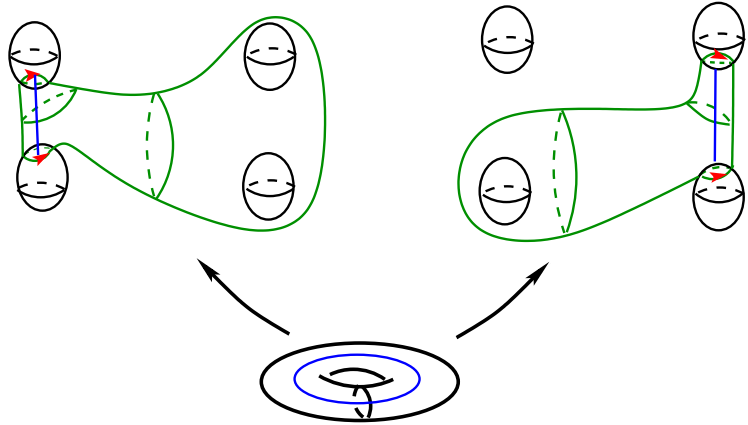}}
\caption{Imbeddings with the given identifications correspond to a separating (on the left) and a non-separating (on the right) tori in $\sharp_4 (S^2\times S^1)$.}
\label{fig:MT}
\end{figure}

As an analog of a geodesic representative from a homotopy class of a curve, among the representatives of a torus in a homotopy class, it is necessary to identify one which intersects the spheres in a maximal sphere system of $M$ minimally. To this end, the \textit{normal form} for tori is defined. Following Hatcher's normal form for sphere systems in \cite{H1}, a normal form for tori defined in \cite{F1} so that the each intersection of the torus with each complementary $3$-punctured $3$-sphere is a disk, a cylinder or a pants piece. And according to \cite{F1}, if a torus $\tau$ is in \textit {normal form} with respect to a maximal sphere system $\Sigma$, then the intersection number of $\tau$ with any $S$ in $\Sigma$ is minimal among the representatives of the homotopy class $\btau$. Here, by \textit{intersection number} we mean the number of components of intersection between the spheres of a maximal sphere system and the torus.

In this work, we will implicitly use the following existence theorem form \cite {F1}.

\begin{theorem}\cite{F1}
Every imbedded essential torus in $M$ is homotopic to a normal torus and the homotopy process does not increase the intersection number with any sphere of a given maximal sphere system $\Sigma$.
\end{theorem}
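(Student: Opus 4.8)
The plan is to follow Hatcher's normalization procedure for sphere systems (\cite{H1}) and adapt it to a single torus; the genuinely new input is the way essentialness of $\tau$ — meaning $\pi_1(\tau)\to\pi_1(M)$ has image $\cong\ZZ$, hence nontrivial kernel because the free group $F_k$ contains no copy of $\ZZ^2$ — tightly constrains how the curves of $\tau\cap\Sigma$ can lie on $\tau$.

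First I would homotope $\tau$ to be transverse to $\Sigma$, so that $\tau\cap\Sigma$ is a finite disjoint union of embedded circles, each lying on one sphere $S\in\Sigma$ and on $\tau$, and each component of $\tau\cap P$ (for a complementary $3$-punctured $3$-sphere $P$) is a compact subsurface of $\tau$ with boundary on $\partial P$. The crucial observation is that every circle $c$ of $\tau\cap\Sigma$ is either null-homotopic on $\tau$ or isotopic on $\tau$ to the meridian $m$, where $\langle m\rangle=\ker(\pi_1(\tau)\to\pi_1(M))$: indeed $c$ lies on a $2$-sphere $S\in\Sigma$, so it bounds a disk in $S$ and is null-homotopic in $M$, hence $[c]$ lies in this kernel $\cong\ZZ$, and an embedded essential simple closed curve on $T^2$ represents a primitive class, forcing $[c]=0$ or $[c]=\pm m$. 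Passing to the universal cover $\widetilde M$ (a treelike union of copies of $P$ in which every sphere of $\widetilde\Sigma$ is separating), a lift $\widetilde\tau$ is a properly embedded copy of $S^1\times\RR$ whose intersection circles with $\widetilde\Sigma$ either bound disks on $\widetilde\tau$ or are cores of that cylinder; in particular $\tau$ cannot be disjoint from $\Sigma$, since the compact manifold $P$ admits no properly embedded $S^1\times\RR$.

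Next I would run the innermost-disk reductions, arranged so that $|\tau\cap S|$ never increases for any $S\in\Sigma$. Pick $c\subset\tau\cap\Sigma$ innermost on $\Sigma$: it bounds a disk $D\subset S$ with $\mathrm{int}\,D\cap\tau=\emptyset$. If $c$ is null-homotopic on $\tau$ it bounds a disk $E\subset\tau$; replacing $E$ by a copy of $D$ pushed slightly off $S$ is a homotopy of $\tau$ (a push across $D\cup E$) that keeps $\tau$ embedded and essential, leaves every $|\tau\cap S'|$ unchanged or smaller, and strictly decreases $\sum_{S\in\Sigma}|\tau\cap S|$, since $D\subset S$ misses every other sphere and the pushed-off copy misses $S$. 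If instead all circles of $\tau\cap\Sigma$ are meridians, they are pairwise parallel on $\tau$; then one removes an innermost ``bigon'' (two such circles on the same sphere cobounding an annulus in $\tau$ and an annulus in that sphere with disjoint interiors) by pushing the annular piece of $\tau$ across the sphere — again a homotopy not raising any $|\tau\cap S'|$ — and the tree structure of $\widetilde M$ together with Alexander's theorem applied to the pieces inside $P\subset S^3$ guarantee such a configuration exists unless $\tau$ is already normal. Since $\sum_S|\tau\cap S|$ is a nonnegative integer strictly dropping at each reduction, the process terminates; when it stops, a short argument identifies every component of $\tau\cap P$ as one of the standard pieces (a disk, a cylinder, or a pants piece), so $\tau$ is normal, and since no $|\tau\cap S|$ ever increased, the intersection number was never increased.

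The hard part is the meridian case of the reduction step: a circle isotopic to $m$ cannot be removed by a disk-exchange keeping $\tau$ a torus, because surgering the torus along such a circle produces a sphere. One must therefore show that whenever a meridian intersection obstructs normality there is nevertheless a reduction — a compression or isotopy manufactured from the tree structure of $\widetilde M$ and from Alexander's theorem inside the $S^3$-pieces — that increases $|\tau\cap S|$ for no $S$. Making this precise, and verifying that the terminal pieces are exactly the standard disks, cylinders, and pants, is the technical core of \cite{F1}; a secondary point is to check that every move can be arranged to keep $\tau$ embedded (hence essential) throughout, so that one genuinely produces a \emph{homotopy} of $\tau$ onto an embedded normal torus.
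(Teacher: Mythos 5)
First, note that the paper does not prove this statement at all: it is quoted verbatim as an existence theorem imported from \cite{F1} (``we will implicitly use the following existence theorem from \cite{F1}''), so there is no in-paper argument to compare yours against. Judged on its own, your sketch follows the right general strategy (transversality, the observation that every intersection circle is either inessential on $\tau$ or a meridian because it dies in $\pi_1(M)$ and embedded curves on $T^2$ are primitive, innermost reductions, termination by a decreasing complexity), and that part is sound. But there are two genuine gaps.

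The first is in the basic disk-swap move. You replace a disk $E\subset\tau$ by a parallel copy $D'$ of an innermost disk $D\subset S$ and call this ``a homotopy of $\tau$ (a push across $D\cup E$).'' In $M=\sharp_k(S^2\times S^1)$ the group $\pi_2(M)$ is highly nontrivial, and the two maps of $E$ rel $\partial E=c$ differ exactly by the class of the sphere $E\cup D'$ in $\pi_2(M)$. Unless that sphere is inessential, the swap changes the homotopy class of the torus, and nothing in your argument rules this out; this is precisely the point where one must work in the universal cover (or otherwise control which side of $S$ the disk $E$ lies on) to see that $E$ and $D'$ cobound a ball-like region, so that the move really is a homotopy. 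The second gap is the one you flag yourself: the entire meridian/termination step --- producing a complexity-decreasing move when no inessential circle is innermost, and verifying that the terminal pieces of $\tau\cap P$ are exactly disks, cylinders, and pants --- is deferred to ``the technical core of \cite{F1}.'' Since that is where essentially all the work of the theorem lives (your null-homotopic-circle reductions alone do not yield normality, because normal form still permits meridional boundary circles and even inessential ones on disk and pants pieces), the proposal is an outline of the standard approach rather than a proof. A smaller point: for the annulus push you assert that no $|\tau\cap S'|$ increases; this requires choosing the cobounding annuli innermost on both $\tau$ and $S$, which you should state explicitly.
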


\subsection{Geometric models, complexes and projections}

Given the  free group $F_k$ on $k$ letters, the associated space of the marked metric graphs $CV_k$ which are homotopy equivalent to $F_k$ with total volume is $1$ introduced first by Culler and Vogtmann in \cite{Vogt1}. A marked metric graph is an equivalence class of pair of a metric graph $\Gamma$ and a marking, which is a homotopy equivalence with a rose. Outer space might be thought as an analogue to \Teich space for the mapping class group. For the details we refer the reader to \cite{Vogt1} and \cite{VogtSurvey}.

The \textit{free factor complex} of a free group is defined first by Hatcher and Vogtmann in \cite{H3} as a simplicial complex whose vertices are conjugacy classes of proper free factors and adjacency is determined by inclusion. It is hyperbolic by \cite{BestHyp}.

We will use the coarse projection $\pi: CV_k\rightarrow FF_k$ defined as follows: for each proper subgraph $\Gamma_0$ of a marked graph $G$ that contains a circle, we take its image in $FF_k$ as the conjugacy class of the smallest free factor containing $\Gamma_0$. Now by \cite{BestHyp}, for two such proper subgraphs $\Gamma_1$ and $\Gamma_2$, $d_{FF_k}(\pi(\Gamma_1),\pi(\Gamma_2) )\leq 4$ hence we have a coarsely defined map.

Another hyperbolic $\Out(F_k)$-- graph we refer to is the \textit{free bases graph} $FB_k$ given by Kapovich-Rafi (\cite{KaRa}).  For $k\geq 3$, this graph has vertices the free bases of $F_k$ up to equivalence, (two bases are equivalent if their Cayley graphs are $F_k$-equivariantly isometric) and whenever two bases representing the vertices have a common element these vertices are connected by an edge. What is useful for us is that $FB_k$ and the $FF_k$ are quasi isometric (\cite{KaRa}).

\textit{The intersection graph} $\calP_k$ has vertex set consisting of marked roses up to equivalence. Two roses are connected by an edge if either they have a common edge with the same label, or there is a marked surface with one boundary component and the representative of this component crosses each edge of both roses twice. There is a Lipschitz map between $FB_k$ and $\calP_k$, constructed by thinking of each basis of $F_k$ as its corresponding rose marking and observing that $\calP_k$ shares the edges and vertices of $FB_k$ and has some additional edges between roses.

\subsection{Tori in $M$ and $\ZZ$--splittings of the free group}
\vfill
In this section we will establish the correspondence between an equivalence class of a $\ZZ$--splitting and  a homotopy class of a torus in $M$.
Consider an imbedded essential torus $\tau$ in $M$. There is a simplicial tree associated to this torus.
To obtain this simplicial tree we take a neighborhood of each lift in the set of lifts  $\widetilde{\tau}$  of $\tau$ and we take a vertex for each complementary component. Two complementary components are adjacent if they bound the neighborhood of the same lift. We will denote this tree by $T_{\btau}$ and as correspondence between this tree and the torus $\tau$  we will understand the $F_k$--equivariant map $\widetilde{M} \rightarrow T_{\tau}$ which sends each complementary component of a neighborhood of a lift to a vertex and shrinks each such neighborhood to an edge. The tree constructed this way is referred to as the \textit{Bass-Serre} tree corresponding to $\tau$. Recall that by Bass--Serre theory, the action of $F_k$ on $T_\t$ gives a single--edged graph of groups decomposition of $F_k$, and hence a $\ZZ$--splitting of the free group $F_k$.

The next lemma gives the existence of an equivalence class of a $\ZZ$--splitting for each homotopy class of a torus in $M$ and its proof is based on the notion of the \textit{ends} of  $\widetilde{M}$.

An end of a topological space is a point of the so called \textit{Freudenthal compactification} of the space. More precisely,
\begin{definition} Let $X$ be a topological space. For a compact set $K$, let $C(K)$ denote the set of components of complement $X-K$. For $L$ compact with $K\subset L$, we have a natural map $C(L) \to C(K)$. These compact sets define a \textit{directed system} under inclusion. The set of ends $E(X)$ of $X$ defined to be the inverse limit of the sets $C(K)$.
\end{definition}

The space $\widetilde{M}$ is non-compact and it has infinitely many ends. We denote the set of ends of $\widetilde{M}$ by $E(\widetilde{M})$. It is homeomorphic to a Cantor set, in particular, it is compact. For a maximal sphere system $\Sigma$ in $M$, the set $E(T_{\Sigma})$ of ends of the Bass-Serre tree  $T_{\Sigma}$ of $\Sigma$ is identified with the set  $E(\widetilde{M})$. By analyzing this set we were able to prove the following.

\begin{lemma}\label{tree} Let $T_{\t_1}$ and $T_{\t_2}$ be two Bass-Serre trees corresponding to two tori $\t_1$ and $\t_2$, respectively. If $\t_1$ and $\t_2$ are homotopic, $T_{\t_1}= T_{\t_2}$ and hence $\t_1$ and $\t_2$ have equivalent $\mathbb{Z}$--splittings.
\end{lemma}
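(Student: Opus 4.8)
The plan is to reduce the statement to a fact about the Cantor set of ends $E(\widetilde{M})$. First note that the second assertion is formal: once $T_{\t_1}$ and $T_{\t_2}$ are $F_k$--equivariantly isomorphic, Bass--Serre theory reads off one and the same one--edge graph--of--groups decomposition of $F_k$ from each, up to equivalence, so $\t_1$ and $\t_2$ have equivalent $\ZZ$--splittings. Hence the task is to show $T_{\t_1}=T_{\t_2}$ as $F_k$--trees. To set up, recall that for a torus $\t$ with $\pi_1$--image $\la c\ra\isom\ZZ$ the preimage $\widetilde{\t}\subset\widetilde{M}$ is a disjoint union of properly embedded copies of $\RR\times S^1$, permuted $F_k$--equivariantly and indexed by $F_k/\la c\ra$. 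Viewing $\widetilde{M}$ as sitting in $S^3$ with complement the Cantor set $\mathcal C:=E(\widetilde{M})$, and using $H_1(\widetilde{M})=0$ (since $\widetilde{M}$ is simply connected), each component $P$ of $\widetilde{\t}$ is two--sided and separates $\widetilde{M}$; being properly embedded it then cuts $\mathcal C$ into two nonempty clopen pieces, the two ``sides'' of $P$. This produces an $F_k$--invariant family $\calW(\t)$ of walls on $\mathcal C$, one per component of $\widetilde{\t}$; and since distinct components of $\widetilde{\t}$ are disjoint, these walls are pairwise nested. Throughout I would identify $\mathcal C$ with $E(T_\Sigma)$ for a fixed maximal sphere system $\Sigma$, so that $\mathcal C$ is a fixed $F_k$--set.

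Next I would recover $T_\t$ from $(\mathcal C,\calW(\t))$ by the standard tree/wallspace duality: since the walls are pairwise nested, the wallspace is dual to a tree whose vertices are the complementary regions of $\widetilde{M}\setminus\widetilde{\t}$ --- that is, the vertices of $T_\t$ --- whose edges are the walls, and carrying the evident $F_k$--action. Hence $T_\t$, as an $F_k$--tree, is determined by the $F_k$--wallspace $(\mathcal C,\calW(\t))$, and it suffices to prove $\calW(\t_1)=\calW(\t_2)$.

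For this, a homotopy $H\from\t\times I\to M$ from $\t_1$ to $\t_2$ lifts to an $F_k$--equivariant homotopy $\widetilde{H}$ carrying $\widetilde{\t}_1$ onto $\widetilde{\t}_2$ component by component; this already forces $\la c_1\ra$ and $\la c_2\ra$ to be conjugate in $F_k$ and pins down the equivariant bijection between the components. Since $\t\times I$ is compact, the tracks of $H$ have uniformly bounded length, so with respect to a fixed metric pulled back from $M$ the lift $\widetilde{H}$ moves every point of $\widetilde{\t}_1$ a uniformly bounded distance. One then argues that a uniformly bounded homotopy of a properly embedded separating surface cannot change the clopen partition of $\mathcal C$ it induces: for $\xi\in\mathcal C$, the side containing $\xi$ is detected by which complementary region a fixed proper ray to $\xi$ is eventually trapped in, and a bounded perturbation of the surface alters only a bounded neighborhood, hence not this ``eventual'' side. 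Matching components equivariantly gives $\calW(\t_1)=\calW(\t_2)$, and the previous paragraph then yields $T_{\t_1}=T_{\t_2}$.

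I expect the serious work to lie entirely in this last step: the point--set claim that a bounded --- but a priori wild --- $F_k$--equivariant homotopy between the \emph{non-compact} cylinders lifting $\t_1$ and $\t_2$ preserves the induced clopen partition of $E(\widetilde{M})$. This is exactly where the ends of $\widetilde{M}$ must enter, and it is where I would work with a concrete model, namely $\widetilde{M}$ presented as a treelike union of $3$--punctured $3$--spheres with the two tori put in normal form with respect to a maximal sphere system, rather than with an abstract $\widetilde{M}$. The set--up and the tree/wallspace dictionary in the first two paragraphs should, by comparison, be routine.
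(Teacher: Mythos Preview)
Your plan is essentially the paper's own argument, recast in wallspace language: encode $T_\t$ by the $F_k$--equivariant family of partitions of $E(\widetilde M)$ cut out by the components of $\widetilde\t$, then show that a homotopy of tori does not change these partitions. Two points deserve sharpening when you carry this out, and both are handled explicitly in the paper.

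First, the partitions are not quite partitions of all of $E(\widetilde M)$: each component $P\cong S^1\times\RR$ of $\widetilde\t$ is stabilized by a conjugate $\la c\ra$ and limits on the two fixed ends $\xi^\pm$ of $c$, which lie on \emph{both} sides of $P$. The paper removes all such axis endpoints before forming the partition tree; you should too, and your ``eventual side of a proper ray'' test only makes sense for $\xi\neq\xi^\pm$.

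Second, your sentence ``a bounded perturbation of the surface alters only a bounded neighborhood, hence not this eventual side'' is not an argument as it stands: the cylinder $P$ is unbounded, so its $D$--neighborhood is unbounded, and merely knowing $P_1$ and $P_2$ lie in a common bounded neighborhood of the axis does not by itself pin down a bijection between their sides. What your bounded--tracks observation actually buys is that the lifted homotopy $\widetilde H:S^1\times\RR\times I\to\widetilde M$ has image missing every end $\xi\neq\xi^\pm$. The paper then runs a parity argument: take a proper arc $\gamma$ from $p$ to $q$ crossing $P_1$ once, make $\widetilde H$ transverse to $\gamma$, and observe that the preimage is a properly embedded $1$--manifold in $S^1\times\RR\times I$; since exactly one boundary point lies on $S^1\times\RR\times\{0\}$, an odd number lie on $S^1\times\RR\times\{1\}$, so $P_2$ separates $p$ from $q$ as well. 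Your boundedness replaces the paper's appeal to normal forms to justify that $\widetilde H$ never crosses $p$ or $q$ (indeed it is a cleaner justification), but the parity step itself is the missing idea in your sketch.
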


\begin{proof}
Let $\t$ be an embedded essential torus. We claim that each lift of $\t$ is two sided. For if it is not 2 sided, then there is a non-trivial (non-homotopic to a point) loop which intersects a lift once and connects an end of $\widetilde{M}$ to itself. Now by the loop theorem, this loop bounds a disk in $\widetilde{M}$. Then, the projection of this disk to $M$ bounds a disk in $M$, which means that the torus bounds a solid torus in $M$. This contradicts the fact that $\tau$ is essential.

 Hence each lift divides $\widetilde{M}$ into two disjoint  parts. A transverse orientation on the torus gives an orientation on the spheres on each such parts and hence there is  a labeling on the valence-$2$ vertices corresponding to these spheres. It is clear that each lift $\sf L = S^1
\times \RR$ of $\t$ defines a decomposition of the set of ends of $T_{\Sigma}$ into two sets $\sf L^+$
and $\sf L^-$ where $\sf L^+\cap \sf L^{-}$ consists of two endpoints, corresponding to the axis of the lift $\sf L$.  For each torus, let us consider all the endpoints corresponding to the axes of all lifts and eliminate them from  the set of ends $E(\widetilde{M})$ of $\widetilde{M}$. Let us denote the  remaining set  $\tilde{E}(\widetilde{M})$.

Now, for each lift $L$, we have a partition ($\sf L^+, \sf L^{-}$) of the set $\tilde{E}(\widetilde{M})$. Since the lifts are disjoint, any two lifts $L_1$ and $L_2$  satisfy either
$\sf L^+_1 \subset \sf L^+_2$ or $ \sf L^+_1\subset \sf L^-_2$.

We construct a tree corresponding to the  set of partitions as follows: Given a  pair of set of partitions ($\sf L_1^+, \sf L_1^{-}$) ($\sf L^+_2, \sf L_2^{-}$) with $ \sf L^+_1 \subset \sf L^+_2$ or $\sf L^+_1\subset \sf L^-_2$, whenever there is no collection of ends $(\sf Z^+,\sf Z^-)$ satisfying $ \sf L^+_1 \subset \sf Z^+\subset \sf L^+_2$ or $\sf L^+_1\subset \sf Z^-\subset \sf L^-_2$ we take an edge between them. For each maximal subset of   $\tilde{E}(\widetilde{M})$ which is not separated by any lift, we take a vertex.  Since the partitions of ends do not intersect, we have a tree.

Now, since for each lift we have a partition of the ends, there is an isomorphism between the tree
given by the partitions and the Bass Serre tree  $T_{\t}$ corresponding to $\t$. More precisely, we define a map between two trees by
taking the ``edge-midpoint'' vertices of  $T_{\t}$  to the set of partitions (i.e. the components of $\widetilde{\t}$).
and the components of $\widetilde{M} - \widetilde{\t}$, i.e.
vertices of the  $T_{\t}$, are mapped to the collections of lifts
(topologically, the frontier components of these components), having
the property that if $L_1$ and $L_2$ are two of them, then (assuming that
we select the notation so that $ \sf L^+_1 \subset \sf L^+_2$ there is no $L_3$
in the collection for which $ \sf L^+_1 \subset \sf L^+_3\subset \sf L^+_2$ or
$ \sf L^+_1 \subset \sf L^-_3 \subset \sf L^+_2$

Now we claim that for a homotopy of embedded tori in $M$, the initial and final tori
determine the same partition of the ends in  $\tilde{E}(\widetilde{M})$, and hence they have the same partition tree, and the same Bass Serre tree as a result.

To see this, let $\t_1$ be homotopic to $\t_2$. To show that we get the
same partitions of the endpoints from the lifts of $\t_1$ and from the
lifts of $\t_2$, we need to show that if two endpoints are separated
by a component $L$ of $\widetilde{\t_1}$, and $L$ is homotopic to $L'\in \widetilde{\t_2}$ then they are separated by $L^\prime$ too.

Let $p$ and $q$ be two endpoints separated by $L$. Fix an arc between them
that crosses $L = S^1 \times \RR$ in one point. During the homotopy, although no longer imbedded, $L$ moves in $\widetilde{M}$,
in particular, it does not touch any endpoint. So assuming that the homotopy is
transverse to the arc, its inverse image in $S^1\times \RR\times I$
consists of circles and arcs properly imbedded in $S^1 \times \RR\times I$.
Note that if the homotopy could cross an endpoint of the arc, then an
arc of the inverse image could fail to be properly imbedded in
$S^1\times \RR \times I$. But this does not happen since the homotopy between the two tori induces a homotopy between normal representatives of each tori, corresponding to a fixed maximal sphere system in $M$. By \cite{F1} such a homotopy is \textit{normal} at each stage hence cannot cross an endpoint.

Back to the inverse image of the homotopy between two tori, since only one endpoint of the inverse image of the arc is
in $L$, there must be an odd number of endpoints in $L^\prime$ (i. e. the arc
crosses $L^\prime$ an odd number of times) and therefore $L^\prime$ also separates $p$
and $q$.
\end{proof}

Recall that given a $\mathbb{Z}$--splitting of $F_k$, an associated \textit{Dehn twist automorphism} of $F_k$ is defined in two following ways.

\begin{align*}
  A*_{\la c \ra} B:& \, a \mapsto a \text{ for } a \in A & \quad
  A*_{\la tct^{-1} = c' \ra}:& \, a \mapsto a \text{ for } a \in A \\
  & \, b \mapsto cbc^{-1} \text{ for } b \in B & & \, t \mapsto tc
\end{align*}
On the left is the definition when the $\mathbb{Z}$--splitting is given by an amalgamated product $F_k=A\ast_{\la c \ra}B$ and on the right is the definition when the $\mathbb{Z}$ splitting is an HNN extension  $A\ast_{\la c \ra}$ of the free group $F_k$. Note that the Dehn twist automorphism in the amalgamated case is defined up to conjugacy since it is possible to reverse the roles of $A$ and $B$.

Before we give the last lemma in this section, we give two theorems relating $\ZZ$--splittings to free splittings which  will be used in the proof.
\begin{theorem}[Shenitzer \cite{shenitzer}]\label{she}
Suppose that a free group $\mathbb{F}$ is an amalgamated free product $\FF=A*_BC$ where $B$ is cyclic. Then $B$ is a free factor of $A$ or a free factor of $C$.
\end{theorem}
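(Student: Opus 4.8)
The statement is classical, and I would organize a proof as follows. If $B$ is trivial the splitting is an ordinary free product and $B$ is vacuously a free factor of $A$ (and of $C$), so assume $B=\langle w\rangle\cong\ZZ$. Subgroups of free groups being free, $A$ and $C$ are free, and since in our application $\FF=F_k$ is finitely generated (the general case reduces to this) I may take $A$ and $C$ finitely generated as well. The only structural input needed is the standard equivalence: for a finitely generated free group $G$ and $g\in G$, the subgroup $\langle g\rangle$ is a free factor of $G$ if and only if $g$ is \emph{primitive} (belongs to a free basis of $G$), if and only if $\rk\!\big(G/\langle\!\langle g\rangle\!\rangle\big)=\rk(G)-1$. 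Hence it suffices to prove that $w$ is primitive in $A$ or in $C$; equivalently, that killing the generator of $B$ inside one of the two vertex groups drops its rank by exactly one.

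The idea is to extract this from a rank count. Write $w_A\in A$ and $w_C\in C$ for the generator of $B$ viewed in each factor, and set $\bar A=A/\langle\!\langle w_A\rangle\!\rangle$, $\bar C=C/\langle\!\langle w_C\rangle\!\rangle$. Since $H_1(\bar A)=H_1(A)/\langle[w_A]\rangle$ has free rank at least $\rk A-1$, we get $\rk\bar A\ge\rk A-1$, and likewise $\rk\bar C\ge\rk C-1$. On the other hand, quotienting $\FF=A*_BC$ by the normal closure of $B$ yields a surjection $\FF\twoheadrightarrow\bar A*\bar C$, so Grushko's theorem gives $\rk\FF\ge\rk\bar A+\rk\bar C$; and additivity of the Euler characteristic on the amalgam,
\[1-\rk\FF=\chi(\FF)=\chi(A)+\chi(C)-\chi(B)=(1-\rk A)+(1-\rk C),\]
gives $\rk\FF=\rk A+\rk C-1$. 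Combining the three, $\rk\bar A+\rk\bar C\le\rk A+\rk C-1$, so the two nonnegative ``defects'' $\rk\bar A-(\rk A-1)$ and $\rk\bar C-(\rk C-1)$ add up to at most $1$; hence at least one of them vanishes. Say $\rk\bar A=\rk A-1$.

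It then remains to invoke the one-variable statement that $\rk\!\big(A/\langle\!\langle w_A\rangle\!\rangle\big)=\rk(A)-1$ forces $w_A$ to be primitive in $A$ — equivalently, that a one-relator quotient of a free group which is again free of rank one less must have a primitive relator. This is classical one-relator group theory; alternatively it can be seen by running Whitehead's algorithm, using that a length-minimal non-primitive element has a connected Whitehead graph with no cut vertex, and that such a relator keeps the quotient's rank equal to $\rk A$. With $w_A$ primitive in $A$, the subgroup $B=\langle w\rangle$ is a free factor of $A$, which is the claim. The main obstacle is exactly this final step: the reduction above cleanly isolates it, but the implication ``rank drops by one $\Rightarrow$ relator primitive'' is not formal and is where the substance of the theorem lies — which is why in the body of the paper we simply cite \cite{shenitzer}.
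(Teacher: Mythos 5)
The paper offers no proof of this statement: Theorem~\ref{she} is cited directly from Shenitzer's 1955 article and used as a black box, so there is no in-paper argument to compare against. Your proposal should therefore be judged on its own.

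Your rank-counting reduction is correct and clean. Passing to $\FF/\langle\!\langle B\rangle\!\rangle\cong\bar A*\bar C$, applying Grushko, using $\chi(\FF)=\chi(A)+\chi(C)-\chi(B)$ with $\chi(\ZZ)=0$ to get $\rk\FF=\rk A+\rk C-1$, and bounding $\rk\bar A\ge\rk A-1$ and $\rk\bar C\ge\rk C-1$ via abelianization all go through; the two nonnegative defects sum to at most $1$, so one vanishes. This is a genuinely different route from a folding/Bass--Serre argument: it is purely algebraic and isolates all the depth into a single one-variable statement, which is a pleasant reorganization.

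The issue is the word \emph{equivalently} in the last paragraph. The statement you actually need is: if $\rk\bigl(A/\langle\!\langle w\rangle\!\rangle\bigr)=\rk A-1$ then $w$ is primitive. You claim this is equivalent to Magnus's theorem that a one-relator quotient of $F_n$ which is \emph{free} of rank $n-1$ has a primitive relator. These are not equivalent: your rank count gives you a quotient $\bar A$ with minimal number of generators equal to $\rk A-1$, but nothing in the count shows $\bar A$ is free (a rank-$m$ quotient of $F_m$ need not be free, e.g.\ $\ZZ^2$ as a quotient of $F_2$). The implication ``rank drops by one $\Rightarrow$ relator primitive'' is a strictly stronger fact than Magnus's freeness theorem; it is true and is a known result in one-relator/Nielsen-equivalence theory, but it is not a formal restatement of Magnus, and the Whitehead-graph route you hint at (minimal non-primitive words have connected, cut-vertex-free Whitehead graph, which forces the quotient to keep rank $n$) is indeed the right way to supply it. You do flag that this is ``where the substance of the theorem lies,'' which is an honest assessment; just replace ``equivalently'' with an acknowledgment that you are invoking the stronger rank version, or else first argue (which would need additional input) that $\bar A$ is free before citing Magnus.
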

\begin{theorem}[Swarup \cite{swarup}] \label{swarup} Suppose that a free group $F$ is an HNN entension $\FF=A*_B $ where $B\neq 1$ is cyclic. Then $A$ has a free product structure $A=A_0*A_1$ in such a way that one of the following symmetric alternatives hold.
\begin{enumerate}
\item $B\subset A_0$ and there exists $a\in A$ such that $t^{-1} B t= a^{-1} A_1 a$. Or,
\item $t^{-1} B t \subset A_0$ and there exists $a \in A$ such that $B=a^{-1} A_1 a$.
\end{enumerate}
\end{theorem}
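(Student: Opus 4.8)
The plan is to prove this by combining a homological reduction with Bass--Serre theory: one realizes the HNN splitting geometrically and uses that freeness of $\FF$ forces the resulting graph of spaces to collapse in a controlled way.

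Write the HNN extension concretely as $\FF=\langle A,t\mid t^{-1}ct=c'\rangle$, where $B=\langle c\rangle$ and $B'=\langle c'\rangle=\langle t^{-1}ct\rangle$ are the two associated nontrivial cyclic (hence infinite cyclic) subgroups of the free group $A$. First I would extract the easy necessary conditions. Computing the Euler characteristic of the underlying one-vertex, one-edge graph of groups gives $\chi(\FF)=\chi(A)-\chi(\ZZ)=\chi(A)$, so $A$ is finitely generated with $\rk(A)=\rk(\FF)$. Abelianizing the presentation gives $H_1(\FF)\cong\bigl(H_1(A)/\langle[c]-[c']\rangle\bigr)\oplus\ZZ$, so the hypothesis that $\FF$ is free forces $[c]-[c']$ to be a \emph{primitive} vector of $H_1(A)\cong\ZZ^{\rk(A)}$; in particular $c$ and $c'$ are distinct in homology. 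This already rules out $A\cong\ZZ$ (there $\FF$ would be a Baumslag--Solitar group $\mathrm{BS}(p,q)$ with $pq\neq 0$, which is never free, since $a^p\sim a^q$ forces $p=q$ in a free group); thus $\rk(A)\geq 2$ in every nonvacuous case.

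Next I would realize the splitting by the graph of spaces $X=R_A\cup\calA$, where $R_A$ is a finite rose with $\pi_1(R_A)=A$ and $\calA=S^1\times[0,1]$ is an annulus glued to $R_A$ along immersed loops carrying $c$ and $c'$, so that $\pi_1(X)\cong\FF$ and the core circle $S^1\times\{1/2\}$ carries $c$ (equivalently $c'$, up to conjugacy). Since the vertex and edge spaces are aspherical and the edge maps are $\pi_1$-injective, $X$ is a $K(\FF,1)$, and because $\FF$ is free, $X$ is homotopy equivalent to a graph. The heart of the argument is to upgrade this to a structural statement: by running Stallings folds on the $1$-skeleton of $X$ relative to the attaching curves (equivalently, by placing the sphere system dual to a free splitting of $\FF$ in minimal position with respect to $\calA$), one shows that after precomposing with a suitable automorphism of $A$ --- that is, after choosing the free basis of $A$ appropriately --- one of the two attaching loops, say the one carrying $c'$, traverses a single basis edge of $R_A$ with algebraic multiplicity $\pm 1$; equivalently $A=A_0*\langle w\rangle$ with $B'=\langle c'\rangle$ conjugate in $A$ to the cyclic free factor $\langle w\rangle$. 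This is the step where the hypothesis that $\FF$ is free is actually consumed, and it is the part I expect to be the main obstacle to write cleanly: one must exclude the ``inefficient'' configuration in which neither attaching loop is Whitehead-minimal compatibly with the other, so that neither $c$ nor $c'$ becomes conjugate to a primitive element of $A$ under any change of basis (the primitivity of $[c]-[c']$ is only a weak shadow of this). I would handle it by a relative peak-reduction argument --- a version of Whitehead's lemma applied to the pair $(c,c')$ carried along by the stable letter $t$ --- with the rank identity $\rk\FF=\rk A$ driving the induction.

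Finally, granted $A=A_0*\langle w\rangle$ with $c'=a^{-1}w^{\pm1}a$ for some $a\in A$, the Tietze substitution $s=ta^{-1}$ turns the defining relation into $s^{-1}cs=w^{\pm1}$; eliminating $w$ exhibits $\FF=\langle A_0,s\rangle=A_0*\langle s\rangle$, which both confirms freeness and shows that alternatives (1) and (2) of the statement are interchanged by the symmetry $t\leftrightarrow t^{-1}$ (which swaps the roles of $B$ and $B'$). A short final adjustment, using primitivity of $[c]-[c']$ together with $\rk A\geq 2$, lets one choose the complementary free factor $A_0$ so that the remaining cyclic subgroup lies literally inside it, yielding the stated form of the decomposition.
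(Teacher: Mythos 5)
The paper does not prove this statement: Theorem~\ref{swarup} is quoted verbatim from Swarup's paper \cite{swarup}, whose original argument is carried out with $3$--manifold techniques (handle reduction for essential annuli in a handlebody), and there is no internal proof here to compare your attempt against. Assessing the proposal on its own merits, your preliminary reductions are fine --- the Euler characteristic computation giving $\rk A = \rk \FF$, the abelianization argument forcing $[c]-[c']$ to be primitive in $H_1(A)$, and the exclusion of $A\cong\ZZ$ are all correct and routine. But there is a genuine gap at exactly the point you flag as ``the main obstacle to write cleanly'': the claim that, after precomposing with a suitable automorphism of $A$, one of the two attaching words becomes conjugate to a basis element of $A$. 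That claim \emph{is} Swarup's theorem; everything else in your write-up is bookkeeping around it. Gesturing at ``a relative peak-reduction argument --- a version of Whitehead's lemma applied to the pair $(c,c')$ carried along by the stable letter'' does not discharge it: ordinary Whitehead peak reduction minimizes a fixed conjugacy class (or tuple) and does not by itself let you simultaneously carry $c$, $c'$ and $t$ while forcing one of $c,c'$ to become a basis element --- which is precisely the compatibility obstruction you name. You would need to actually prove the relative statement (via Whitehead graphs adapted to the cyclic splitting, via Stallings folds on the relative presentation, or by Swarup's handle-reduction), not sketch that one exists.

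There is also a second, structural problem with the order of your final steps. You first obtain $A=A_0*\langle w\rangle$ with $c'$ conjugate to $w^{\pm1}$, then perform the Tietze substitution $s=ta$ (giving $s^{-1}cs=w^{\pm1}$) and ``eliminate $w$'' to exhibit $\FF=A_0*\langle s\rangle$, and only afterwards invoke a ``short final adjustment'' to put $c$ inside $A_0$. But the elimination of $w$ via $w=s^{-1}c^{\pm1}s$ is only a genuine Tietze elimination when $c$ does not itself involve $w$, i.e.\ when $c\in A_0$ --- otherwise the relation $w=s^{-1}c(A_0,w)^{\pm1}s$ is recursive, not an elimination, and the claimed free product decomposition of $\FF$ does not follow. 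So the hypothesis $c\in A_0$ (which is half the content of the conclusion, namely $B\subset A_0$) is being used before it is established, and the ``short final adjustment'' promising to arrange it from primitivity of $[c]-[c']$ plus $\rk A\geq 2$ is both unsupported and out of order. The homological primitivity of $[c]-[c']$ is, as you yourself say, only a weak shadow of the conclusion and does not yield that the complementary factor $A_0$ can be chosen to contain a conjugate of $c$.
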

where $t$ is the stable letter.

Finally, the following lemma gives the converse relation between a torus and $\ZZ$--splitting hence explains why we are interested in tori in $M$. The argument is due to Matt Clay.

\begin{lemma}\label{CR} Given a $\ZZ$--splitting $Z$ and an associated Dehn twist automorphism, there is a torus $\tau$ in $M$ unique up to homotopy such that $T_Z= T_\t$ where $T_Z$ and $T_\t$ are the corresponding Bass-Serre trees.
\end{lemma}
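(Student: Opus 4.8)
The plan is to establish the correspondence in the direction opposite to Lemma~\ref{tree}: given the $\ZZ$--splitting $Z$ we produce a tree $T_Z$ from Bass--Serre theory, and we must exhibit an embedded essential torus $\tau$ in $M$ whose Bass--Serre tree, built as in the previous subsection from the set of lifts $\widetilde\tau$, equals $T_Z$. First I would invoke Theorems~\ref{she} and~\ref{swarup} to put the splitting into a normal form compatible with the free product structure: in the amalgamated case $F_k = A *_{\langle c\rangle} B$, Shenitzer's theorem lets us assume (after possibly swapping $A$ and $B$) that $\langle c\rangle$ is a free factor of, say, $A$, so we may write $A = \langle c\rangle * A'$ and $F_k = (\langle c\rangle * A') *_{\langle c\rangle} B$; in the HNN case $F_k = A*_{\langle c\rangle}$ with stable letter $t$, Swarup's theorem gives $A = A_0 * A_1$ with $\langle c\rangle$ conjugate into $A_0$ and $t^{-1}\langle c\rangle t$ conjugate to a conjugate of $A_1$ (or the symmetric alternative). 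These normal forms identify the vertex groups with honest free factors of $F_k$ and identify the edge group $\langle c\rangle$ with a primitive cyclic subgroup, which is exactly the data one needs to realize the picture topologically in $M$.

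Next I would realize the splitting in $M = \sharp_k(S^2\times S^1)$ geometrically. Fixing a maximal sphere system $\Sigma$ and the resulting graph-of-groups / graph-of-spaces model, the free factor decomposition coming from the normal form above corresponds to a subsystem of spheres (or a partial sphere system) that cuts $M$ into pieces realizing the vertex groups, and the conjugacy class of the element $c$ generating the edge group is represented by an embedded loop $\gamma$ carried by the appropriate piece. In the amalgamated case one takes a tubular neighborhood of a sphere separating the two vertex pieces and pushes a meridian loop representing $c$ into it; more precisely, $\tau$ is obtained as the boundary of a regular neighborhood of (sphere $\cup$ loop) — an annulus-times-interval plumbing — giving a separating torus whose core is freely homotopic to $c$ and whose complementary pieces have fundamental groups $A$ and $B$. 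In the HNN case the loop $\gamma$ representing $c$ is non-separating (it records the stable letter $t$), and the torus is the boundary of a neighborhood of $\gamma$ together with an arc realizing $t$, producing a non-separating torus. In both cases the torus is essential: if it bounded a solid torus then the edge group would be trivial or the splitting would be trivial, contradicting the hypothesis that $Z$ is a genuine $\ZZ$--splitting.

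Then I would verify $T_Z = T_\tau$. The lifts $\widetilde\tau$ in $\widetilde M$ are indexed by cosets of $\langle c\rangle$ (equivalently by edges of $T_Z$), because the loop $\gamma$ lifts to a line stabilized exactly by the conjugate of $\langle c\rangle$ dictated by the coset, and the construction of $T_\tau$ from the complementary components of $\widetilde\tau$ recovers precisely the $F_k$--equivariant bipartite tree of the graph-of-groups decomposition — vertices are the complementary pieces (cosets of the vertex groups) and edges are the lifts of $\tau$ (cosets of $\langle c\rangle$), with the incidence relation matching the one in $T_Z$. One checks the $F_k$--equivariant simplicial isomorphism directly on vertex and edge stabilizers; since a tree with $F_k$--action is determined by its quotient graph of groups, and both quotients are the single edge of $Z$ with the same vertex and edge groups, the two trees coincide. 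Uniqueness of $\tau$ up to homotopy is then immediate from Lemma~\ref{tree}, which says the homotopy class of an essential torus is determined by its Bass--Serre tree: if $\tau'$ is another essential torus with $T_{\tau'} = T_Z = T_\tau$, then $\tau' \isotopic \tau$.

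The main obstacle I expect is the geometric realization step: turning the purely algebraic free-product normal form coming from Shenitzer's and Swarup's theorems into an honest embedded essential torus in $M$ with the correct core curve and complementary pieces, and being careful in the HNN case about which of the two symmetric alternatives of Theorem~\ref{swarup} occurs — in one alternative the edge group itself is a free factor, in the other only a conjugate of it is, and one must check that both still yield an embedded (necessarily non-separating) torus whose neighborhood realizes the stable letter correctly. Once the torus is produced, matching the trees is a formal Bass--Serre bookkeeping exercise and invoking Lemma~\ref{tree} for uniqueness is routine.
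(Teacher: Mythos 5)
Your construction of $\tau$ — take a sphere $S$ realizing the free splitting produced by Shenitzer's or Swarup's theorem, attach an embedded loop $\gamma$ representing the edge group generator, and let $\tau$ be a boundary component of a regular neighborhood of $S\cup\gamma$ — is exactly the paper's construction, and your Bass--Serre bookkeeping to check $T_Z=T_\tau$ is fine. The gap is in the uniqueness claim at the end: you say uniqueness "is then immediate from Lemma~\ref{tree}, which says the homotopy class of an essential torus is determined by its Bass--Serre tree." That is not what Lemma~\ref{tree} says. Lemma~\ref{tree} proves the implication \emph{homotopic tori $\Rightarrow$ same Bass--Serre tree}; uniqueness here needs the converse, \emph{same Bass--Serre tree $\Rightarrow$ homotopic tori}, which Lemma~\ref{tree} does not provide.

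In fact the converse (well-definedness of the construction) is the bulk of the paper's proof and is where the real work happens. Shenitzer's theorem does not produce a canonical free splitting: there are many complementary free factors $B_0, B_1$ with $\langle b\rangle * B_0 = \langle b\rangle * B_1 = B$, and the corresponding spheres $S_0, S_1$ need not be homotopic — only the resulting tori are. The paper handles this by reducing to elementary Nielsen moves (replacing a generator $x$ of $B_0$ by $xb$, etc.), citing the fact that the stabilizer of $b$ in $\Aut(B)$ is generated by Nielsen automorphisms fixing $b$, and then exhibiting an explicit homeomorphism of $M$ that slides $S_0$ along $\gamma$ to $S_1$ while leaving the torus $\tau_0$ fixed pointwise, so that a regular neighborhood of $S_0\cup\gamma$ is carried to one of $S_1\cup\gamma$. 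There is also the additional case $F=A_0*\langle b\rangle *B_0$ where $b$ can be absorbed into either side, handled by a separate argument inside a once-punctured $S^1\times S^2$ piece. None of this appears in your proposal; what you flagged as "the main obstacle" (the symmetric alternatives in Swarup's theorem) is a minor point compared to the ambiguity in the choice of complementary free factor, which is present already in the amalgamated case and is where the proof actually lives.
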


\begin{proof} In this proof we will build a homotopy class of a torus from a sphere and a loop. First we use theorems \ref{she} and
 \ref{swarup} that relate a $\ZZ$--splitting of $F_k$ to a free splitting of $F_k$. Then to relate the free splitting to a homotopy class of  a sphere we use a theorem originally due to Kneser (\cite{kneser}). This theorem is later developed by Grushko \cite{Grus}, and most recently by Stallings (\cite{STA1}) and these are the versions we will be referring to. We treat the amalgamated product and
  HNN-extension cases separately. Amalgamated case has schematic pictures Figure \ref{fig:slide} and Figure \ref{fig:slide2} associated to the proof.

  \medskip \noindent {\bf Case 1:} We first consider the case of an
  amalgamated free product $F_k = A*_{\la b \ra}B$.  By Shenitzer's
  Theorem \ref{she}, ${\la b \ra}$ is either a free factor of $A$ or a free factor of $B$. Hence, there is a free splitting $F = A*B_0$ where $B =
  \la b \ra* B_0$ or $F_k = A_0*B$ with $A=A_0*\la b \ra$. Let us assume the former, and let $S \subset M$ be an imbedded (separating)
  sphere representing this splitting.  We fix a basepoint $* \in M$
  and assume it lies on $S$.  As $b \in A$, there is an embedded loop
  $\gamma \subset M$ that represents $b \in F$ and only intersects $S$
  at $*$.  For small $\epsilon$, boundary of the closed
  $\epsilon$--neighborhood of $S \cup \gamma$ consists of two
  components: an imbedded sphere isotopic to $S$ and an imbedded
  essential torus $\tau_\gamma$.

 Every torus can be written as a sphere and a loop attached to it. Hence it is clear from the construction, that the splitting of $F$
  associated to $\tau_\gamma$ is the original
  splitting.  However, there are some choices made in the construction
  of $\tau_\gamma$ and it must be shown that different choices result
  in homotopic tori.  It is clear that changing $S$ or $\gamma$ in the construction by
  a homotopy results in a change of $\tau_\gamma$ by a homotopy.

  Now since Shenitzer's theorem \cite{shenitzer} gives many possible splittings differed by automorphisms of $B$ fixing $\la b \ra$ (Hence Nielsen automorphisms that fix $b$), we need to
  consider two different complementary free factors $B_0$ and $B_1$ of
  $A$ such that $\la b \ra *B_0 = \la b \ra *B_1 = B$ and show that
  the tori obtained after we add the loop to corresponding spheres are homotopic, even when the spheres themselves are not.
  For this, let $S_0$ and $S_1$ be the spheres representing the splittings $A*B_0$ and $A*B_1$
  respectively and $\tau_0$ and $\tau_1$ be the tori as constructed
  above using these spheres.  We assume that $\gamma$ intersects $S_0$
  only at the fix basepoint $* \in M$.

  We first treat the special case that $B_1$ is obtained from $B_0$ by
  replacing a generator $x \in B_0$ by $xb$.  Fix a basis for $F$
  consisting of a basis for $A$ and a basis for $B_0$ where $x$ is one
  of the generators for $B_0$.  This corresponds to a sphere system in
  $M$ which decomposes as $\Sigma_A \cup \Sigma_{B_0}$; the sphere
  $S_0$ separates the two sets $\Sigma_A$ and $\Sigma_{B_0}$.  In
  terms of these sphere systems, we can describe a homeomorphism that
  takes $S_0$ to (a sphere isotopic to) $S_1$.

  Denote by $\Sigma_\gamma$ the ordered set of spheres (all in
  $\Sigma_A$) pierced by $\gamma$ starting from the basepoint.  Cut
  $M$ open along the sphere $\beta$ corresponding to the generator $x$
  and via a homotopy push the boundary sphere $\beta^-$ through the
  spheres in $\Sigma_\gamma$ in order, dragging $S_0$ along.  After
  regluing $\beta^+$ and $\beta^-$, the image of $S_0$ is $S_1$ and
  the sphere $\beta$ now corresponds to $xb$.  By shrinking $\beta^-$
  and $S_0$, we can assume that homotopy is the identity on $\tau_0$
  and $\gamma$.  Thus, we have a homeomorphism taking $S_0$ to $S_1$,
  $S_0 \cup \gamma$ to $S_1 \cup \gamma$ and is the identity on
  $\tau_0$.  As a homeomorphism takes a regular neighborhood to a
  regular neighborhood, $\tau_0$ is homotopic to $\tau_1$.

To see a schematic picture of this homotopy we refer the reader to the Figure \ref{fig:slide}. In the first picture, the black sphere (an example of $S_0$) and a neighborhood of the red loop gives a torus ($\tau_0$). And this torus is homotopic to the torus obtained from the last picture by taking a neighborhood of the black sphere (now an example for $S_1$) and the red loop.

A similar argument works if we replace $x$ by $xb^{-1}$, $bx$ or $b^{-1}x$.

The general case now follows as we can transform $B_0$ to $B_1$ by a finite sequence of the above transformations plus changes of basis that do not affect the associated spheres.
Indeed, the subgroup of automorphisms of $B$ that fix $b \in B$ is generated by the Nielsen automorphisms that fix $b$( \cite{ar:Wade14}, Theorem 4.1).

\begin{figure}
\centering{\includegraphics[width=0.55\textwidth]{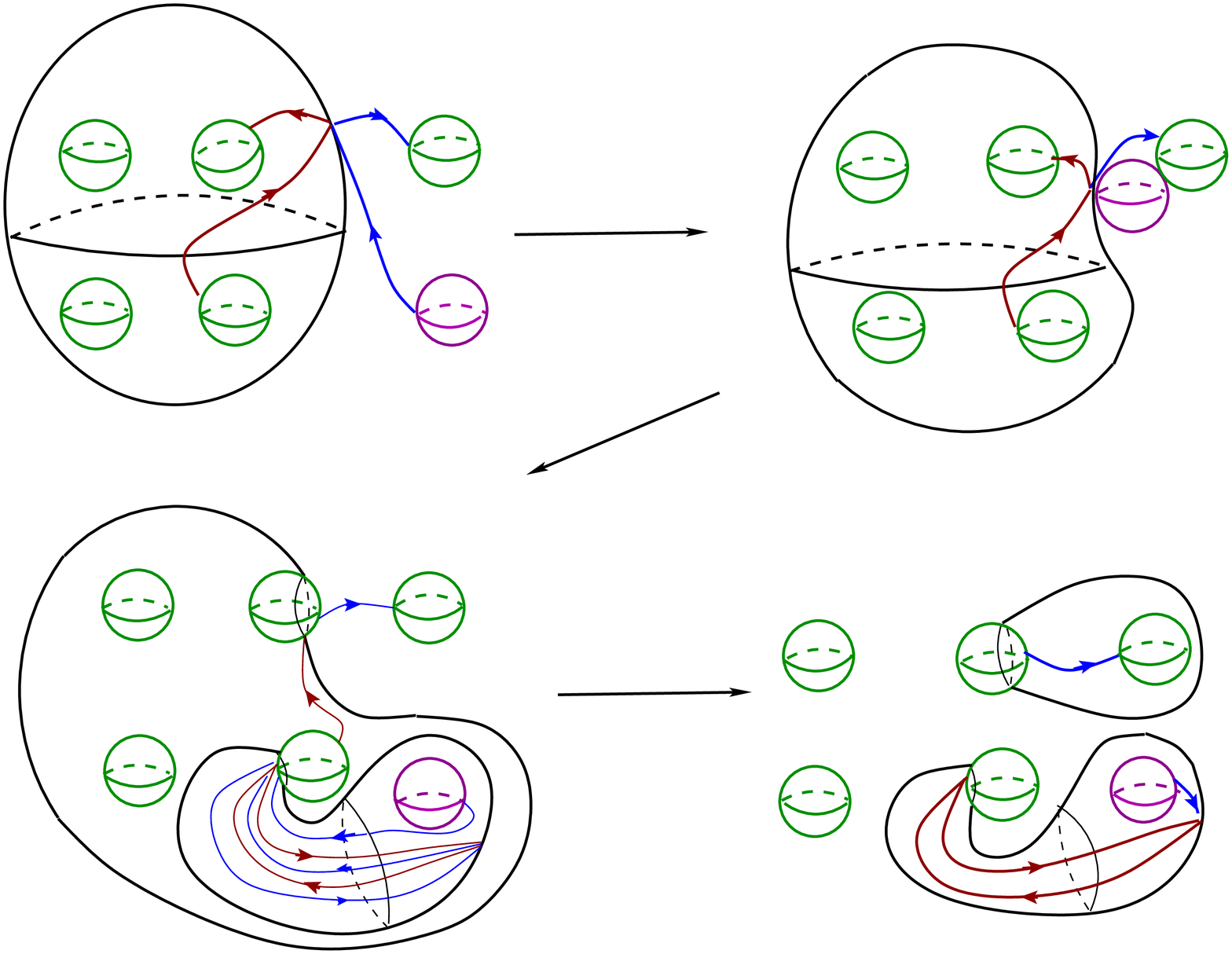}}
\caption{The homotopy which slides the pink sphere along the red loop $\gamma$ representing $b$  where $\pi_1(M)=\la a,b,c\ra$. In the first picture, a sphere (black) and the b loop(red) is given where the base point is on the sphere. c is depicted blue in the picture.}
\label{fig:slide}
\end{figure}

Finally, we need to consider the possibility that $F = A_0 * \la b \ra * B_0$ where $A = \la A_0,b \ra$ and $B = \la B_0,b \ra$.  Let $S_A$ and $S_B$ be the spheres representing the splittings $A*B_0$ and $A_0*B$ respectively, fix loops $\gamma_{A}$ and $\gamma_{B}$ representing $b$ and consider the neighborhoods  $s_{A}$  of $S_{A} \cup \gamma_{A}$ and $s_{B}$ of $S_{B} \cup \gamma_{B}$. Since each of these neighborhoods give a torus and a sphere, we have tori $\tau_{A}$ and $\tau_B$ that both represent the splitting $A*_{\la b \ra} B$.  In this case as a component of $M - (S_{A} \cup S_{B})$ is $S^1 \times S^2$ with two balls removed, it is easy to see that $\tau_A$ and $\tau_B$ are homotopic.  Indeed, let us model $S^1 \times S^2$ as the region between the spheres of radius 1 and 2 in $\RR^3$ after identifying the boundary spheres.  Remove a ball of radius $\frac{1}{4}$ at each of the points $(0,0,3/2)$ and $(0,0,-3/2)$.  For $\gamma_{A}$ we can choose the intersection with the positive $z$--axis; for $\gamma_{B}$ we can choose the intersection with the negative $z$--axis.  Then clearly the torus obtained from the intersection with the $xy$--plane is homotopic to both $\tau_A$ and $\tau_B$. For a simple example see Figure \ref{fig:slide2}.

\begin{figure}
\centering{\includegraphics[width=0.3\textwidth]{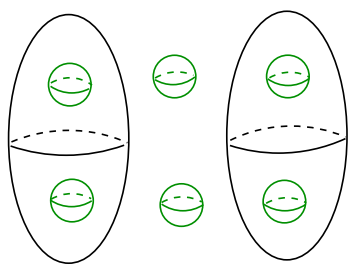}}
\caption{In this example $F_3=\la a\ra*\la b\ra * \la c \ra$ and $A=\la A_0,b\ra$ with $A_0=\la a \ra $ and $B=\la b, B_0\ra$ with $B_0=\la c\ra$. In the picture spheres $S_A$ and $S_B$ are seen in black.}
\label{fig:slide2}
\end{figure}

\medskip \noindent {\bf Case 2:} We now consider the case of an HNN-extension $F_k = A*_{\la b \ra}$.  By Swarup's Theorem~\cite{swarup}, there is a free factorization $A = A_0*\la t^{-1}bt \ra$ for some $t \in F_k$, such that $A_0$ is a co-rank 1 free factor of $F_k$ and such that $b \in A_0$.  Let $S \subset M$ be an embedded (non-separating) sphere representing the splitting $F_k = A_0*_{ \{ 1 \} }$.  We fix a basepoint $p \in M$ and assume it lies on $S$.  As $b \in A_0$, there is an embedded loop $\gamma\subset M$ that represents $b \in F$ and only intersects $S$ at $p$.  Further, both ends of $\gamma$ are on the same side of $S$ and so a neighborhood of $s = S \cup \gamma$ gives a torus.  Let $\tau$ be this neighborhood of $s$ and as in Case 1, it is clear that the splitting associated to $\t$ is the original splitting.

Given another torus $\tau' \subset M$ that represents the same splitting, we can compress $\tau'$ to a union of a sphere and a loop $s' = S' \cup \gamma'$ such that $S'$ represents a splitting of the form $A_{1} *_{\{1\}}$ where $A = A_{1} * \la t b t^{-1} \ra$.  Then as in Case 1, there is a sequence of transformations taking $A_{0}$ to $A_{1}$ that do not change the homotopy type of the corresponding torus.
\end{proof}

\section{geometric intersection and relative twisting using ends of $\widetilde{M}$}

\subsection{The intersection criterion and the relative twisting number}

The Guirardel core $\calC$ is a way of assigning a closed, connected CAT($0$) complex to a pair of splittings which counts the number of times the corresponding Bass-Serre trees intersect. Guirardel's version unifies several notions of  intersection number in the literature including the one given for two splittings of finitely generated groups given by Scott and Swarup (\cite{ScottSwarupInt}). For two $F_k$--trees $T_0$ and $T_1$ the core is roughly the main part of the diagonal action of the $F_k$ on $T_0\times T_1$. For the details we refer the reader to \cite{Gui:Core} and \cite{bestclay1}.

\begin{definition}Let $T$ be a tree and $p$ a point in it. A \textit{direction} is a connected component of $T-p$. Given two trees $T_0$ and $T_1$, a \textit{quadrant} is a product $\delta\times \delta^{'}$ of two directions $\delta \subset T_0$ and $\delta^{'}\subset T_1$.
\end{definition}

We fix a basepoint $\ast=(\ast_0, \ast_1)$ in $T_0\times T_1$ and we say that a quadrant $Q$ is \textit{heavy} if there exists a sequence $\{g_n\}$ in $F_k$ such that $g_{n}(\ast)\in Q $ for every $n$ and $d_{T_i}(\ast_i, g_n(\ast_i))\rightarrow \infty$ as $n\rightarrow \infty$. A quadrant is \textit{light} if it is not heavy.

\begin{definition}Let $T_0$ and  $T_1$ be to $F_k$--trees. The Guirardel core $\mathcal{C}$ defined  as
\[
  \mathcal{C}=\mathcal{C}(T_0\times T_1)= (T_0\times T_1)-\cup_IQ
\]
where $I$ is over all of the light quadrants.
\end{definition}

Let $p$ be a point in $T_0$. Then, $\mathcal{C}_p=\{x\in T_1\mid (p,x)\in  \mathcal{C}\}$ is a subtree of $T_1$ called the \textit{slice of the core} above the point $p$. The slice which is a subtree of $T_0$ is defined similarly.


Given two trees $T_0$ and $ T_1$,
we define the \textit{Guirardel intersection number} between them by:
\[
  i(T_0, T_1)= vol(\mathcal{C}/F_k)
\]
where the right hand side is given by the volume of the action of $F_k$ on the Guirardel core $\mathcal{C}(T_0\times T_1)$  for the product measure on $T_0\times T_1$. Note that for simplicial trees $T_0$ and  $T_1$ this volume is  the number of the $2$-cells in $\mathcal{C}/F_k$, which will be our case.



Given a maximal sphere system $\Sigma$ in $M$ and a homotopy class $\bf S$ of a sphere $S$, the intersection number between a sphere $\bf S$  and $\Sigma$ is the number $i(\bf S ,\Sigma)$ of components of intersection of a normal representative of $S$  with spheres of $\Sigma$ when the intersections are transversal (\cite{H1}). This is called the \textit{geometric} intersection number.

We need to understand how geometric intersection numbers between two sphere systems and the Guirardel intersection numbers between corresponding trees are related. Each sphere in $\widetilde{M}$ corresponds to an edge in the associated Bass--Serre tree and a geometric intersection between two spheres will give a square in product of the Bass--Serre trees. If the intersection is essential, this square is in the Guirardel core. By the definition of  Guirardel core, a square which is in the core is in a heavy quadrant. As a consequence there exists $4$ unbounded disjoint regions in $\widetilde{M}$ corresponding to such a square. On the other hand, each sphere $\widetilde S$ gives two disjoint sets $E^+(\widetilde S)$ and $E^-(\widetilde S)$ of ends of  $\widetilde{M}$. Thus whenever two spheres $\widetilde S_1$ and $\widetilde S_2$ intersect essentially in $\widetilde{M}$ there are four disjoint sets of ends $E^+(\widetilde S_1)\cap E^+(\widetilde S_2)$, $E^+(\widetilde S_1)\cap E^-(\widetilde S_2)$, $E^-(\widetilde S_1)\cap E^+(\widetilde S_2)$, $E^-(\widetilde S_1)\cap E^-(\widetilde S_2)$  of $\widetilde M$ in the complement of the intersection circle.

Finally we have the following definition which we will use in the next section.
\begin{definition}We will call the existence of four disjoint sets of ends of $\widetilde M$ in the complement of an  intersection circle between two spheres the \textit{intersection criterion} for that intersection circle.
\end{definition}

According to the discussion above, an intersection circle between two spheres is  essential if and only if we have intersection criterion satisfied  for that intersection circle.

Now we will define the \textit{relative twisting number} for two intersecting sphere systems $\Sigma_1$ and $\Sigma_2$.

Given an axis of an element in $\widetilde{M}$, there are two ends of $\widetilde{M}$ which are fixed by this axis.

\begin{definition}A sphere $\widetilde{S}$ is said to intersect an axis $\ba$ whenever the two ends of $\widetilde{M}$ determined by $\ba$ are separated by the two disjoint sets $E^+(\widetilde{S})$, $E^{-}(\widetilde{S})$ of ends corresponding to $\widetilde{S}$.
\end{definition}

Now we will define the relative twisting number between $\Sigma_1$ and $\Sigma_2$ relative to an element $a\in F_k$.
This number is meaningful when both sphere systems intersect an axis $ba$ of $a$ in $\widetilde M$. The definition of geometric relative twisting of \cite{CP2} is for two points in Outer space, which are simple sphere systems in our setting. For our purposes we will translate their definition to a one which is stated in terms of ends of $\widetilde {M}$.

We will start by the definition given in \cite{CP2}.

\begin{definition} Let $\Sigma_1$ and $\Sigma_2$ be two simple sphere systems in $M$ and  $T_{\Sigma_1}$ and $T_{\Sigma_2}$ the corresponding Bass--Serre trees. Let $e_{\widetilde S_1}$ and $e_{\widetilde S_2}$ be two edges in  $T_{\Sigma_1}$ and $T_{\Sigma_2}$ corresponding to spheres $S_1\in \Sigma_1$ and $S_2\in \Sigma_2 $, respectively.  Then, for an element $a\in F_k$, the relative twisting $tw_{a}(\Sigma_1, \Sigma_2$) of $\Sigma_1$ and $ \Sigma_2$ relative to $a$ is defined to be,
\[
  tw_a(\Sigma_1, \Sigma_2):=\max_{\mathclap{\substack{ e_{\widetilde S_1}\subset {T^a_{\Sigma_1}}\\
  e_{\widetilde S_2}\subset T^a_{\Sigma_2}}}} \,\,\{\,k \mid a^ke_{\widetilde S_1}\times e_{\widetilde S_2}\in \calC \,\, \text{and}\,\, e_{\widetilde S_1}\times e_{\widetilde S_2}\in \calC \}
\]
where $T^a_{\Sigma_1}$ and $T^a_{\Sigma_2}$ are the sets of edges of $T_{\Sigma_1}$ and $T_{\Sigma_2}$ respectively whose elements intersect a fixed axis $\ba$ of $a$.

\end{definition}

Using the fact that a geometric intersection between two spheres is a square in the Guirardel core and hence it gives a separation of ends of $\widetilde M$ into $4$ nonempty disjoint sets, we tailor this definition to the one below to suit our needs.

\begin{definition} For $i\in \{1,2\} $,  let $\widetilde S_i$ be two spheres and $E^{\mp}({\widetilde S_i})$ be the set of ends of $\widetilde M$ separated by these spheres. Assume that both spheres intersect an axis $\ba\in F_k$. Then the relative twisting $tw_{a}(\widetilde S_1, \widetilde S_2)$ of $ \widetilde S_1$ and $\widetilde S_2$ relative to $a$ is defined to be

\begin{equation*}
\begin{aligned}
tw_{a}(\widetilde S_1, \widetilde S_2):={} & \max \{k\in \mathbb{Z} \mid E^{\mp}({\widetilde S_i})\,\cap E^{\mp}({a^k\widetilde S_j})\neq\emptyset \,\,\text{whenever}\,E^{\mp}({\widetilde S_i})\,\cap\, E^{\mp}({\widetilde S_j})\neq\emptyset, \\
 &\,\{i,j\}\in \{1,2\}\}
\end{aligned}
\end{equation*}
\end{definition}


\section{Dehn twist along a torus: The Geometric Picture}

\subsection{Definition of a Dehn twist along a torus }

We will now give the definition of the Dehn twist homeomorphism about a torus in $M$, a description of the action of such a homeomorphism on spheres in $M$, and a description of the action on $F_k$.

To define a Dehn twist along an imbedded torus $\tau$, we will take a parametrized tubular  neighborhood of the torus in $M$.

\begin{figure}
\centering{\includegraphics[width=0.5\textwidth]{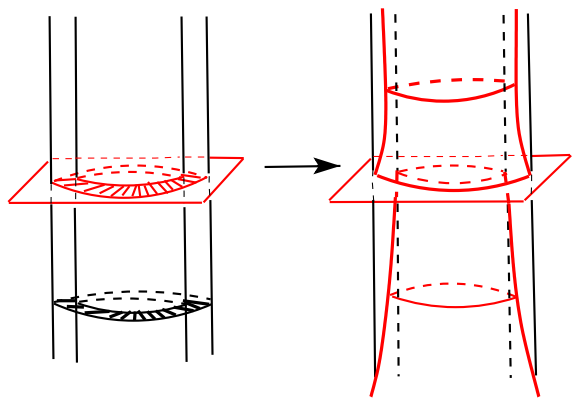}}
\caption{The image of the intersection annulus under a Dehn twist along the thick black torus.}
\label{fig:NDT1}
\end{figure}

\begin{definition}
\label{Def:dehntwist}
Let $\tau: \mathbb{R}/\mathbb{Z} \times \mathbb{R}/\mathbb{Z} \times [0,1] \rightarrow M$ be an imbedding such that $\tau(\{0\}\times \mathbb{R}/\mathbb{Z}\times \{0\})$ bounds a disk in $M$. Denote the associated torus $\tau(\mathbb{R}/\mathbb{Z}\times \mathbb{R}/\mathbb{Z}\times \{0\})$ with $\tau$. The \emph{geometric Dehn twist} $D_{\t}$ along the torus $\t$ is the homeomorphism of $M$ that is the identity on the complement of the image of the map $\tau$ and for which a point $p = \t(x,y,t)$ is sent to $\t(x+t,y,t)$.
\end{definition}
Here, the direction of the associated torus which bounds a disk in $M$ will be the \textit{meridional} direction and the other one will be called \textit{longitude} direction. For a geometric description of the twist, we refer the reader to Figure \ref{fig:NDT1}.

Now, an ambiguity might arise in the definition of a geometric Dehn twist when it comes to determining a longitude curve for the parametrization. But two such choices differ by a map $x \mapsto x+ny $ for some integer $n$ and a twist along the meridional direction. By work of Laudenbach (\cite{LAUD}) meridional direction does not give a non-trivial homeomorphism since twists along meridional direction correspond to twists along  $2$--spheres in $M$. It is known  that such mapping classes act trivially on $F_k$ hence they are in the kernel of the homomorphism $MCG(M)\rightarrow \Out(F_k)$ (\cite{McCHat}). Hence, the induced outer automorphism $D_{\t*}$ from the geometric Dehn twist $D_\t$ is independent of the parametrization of the neighborhood of the torus (image of the map $\tau$).

Now, assume that we have a loop $\xi$ intersecting a torus $\t$ transversely. Then the image $D_\t(\xi)$ of the loop under the geometric twist $D_\t$ is obtained as follows: we surger the loop at the intersection point and insert a loop $\beta^+$ or $\beta ^-$ representing a generator of $\pi_1(\t)$ in $\pi_1(M)$, depending on which side of the torus the intersection point is. Hence the induced automorphism conjugates $\xi$ with one of $\beta^+$ or $\beta^-$ and fixes the other. If $\t$ is non-separating, then the stable letter is multiplied by $\xi$.

This coincides with the action of $D_{\t*}$ on the corresponding splitting: When the splitting is amalgamated product of the form $A*_{\la c \ra} B$, $A$ is fixed whereas $B$ is conjugated by the generator of the fundamental group of torus $\tau$ in $M$. (Roles of $A$, $B$ might be changed.) When we have $A*_{\la tct^{-1} = c' \ra}$, $D_{\t*}$ fixes $A$ and $t$ is multiplied by $c$.

As a summary  we have,
\begin{lemma}\label{geoDT} Let $\t$ be  an imbedded torus and $D_\t$ the associated geometric Dehn twist. Then, $D_{\t *}= D_Z$ where $D_{\t *}$ is the Dehn twist automorphism induced by the homeomorphism $D_\t$ and $D_Z$ is the Dehn twist automorphism given by the $\ZZ$--splitting $Z$ associated to the torus $\t$.
\end{lemma}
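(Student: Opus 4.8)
The plan is to compare the two outer automorphisms by tracking how each acts on $F_k = \pi_1(M)$ via an explicit model for the torus $\tau$ and its Bass--Serre tree $T_\tau = T_Z$. By Lemma~\ref{CR} (Case~1 and Case~2 of its proof) the torus $\tau$ is, up to homotopy, the boundary of a regular neighborhood of $S \cup \gamma$, where $S$ is an embedded sphere realizing the associated free splitting and $\gamma$ is an embedded loop representing the edge-group generator $c \in F_k$ meeting $S$ once at the basepoint. I will read off the longitude direction of $\tau$ as (a conjugate of) $c$ and the meridional direction as the one bounding a disk --- exactly as stipulated in Definition~\ref{Def:dehntwist} and the paragraph after it. So the setup gives a parametrization in which $D_\tau$ acts by the $x \mapsto x+t$ shift supported in the tubular neighborhood of $\tau$, and the longitudinal curve is $c$.

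First I would treat the amalgamated case $F_k = A *_{\la c \ra} B$. Take a loop $\xi$ based at $*$; to compute $D_{\tau*}([\xi])$ use the surgery description given just before the lemma statement: at each transverse intersection of $\xi$ with $\tau$, the image loop is obtained by cutting $\xi$ and splicing in a copy of $\beta^{\pm}$, a generator of $\pi_1(\tau) \hookrightarrow \pi_1(M)$, with sign determined by the side of $\tau$. Since $\tau = \partial N(S\cup\gamma)$, the two sides of $\tau$ correspond exactly to the two sides of $S$, i.e. to the vertex groups $A$ and $B$; and $\pi_1(\tau) \to \pi_1(M)$ has image $\la c \ra$. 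Hence a generator $a\in A$ (lying, say, on the $A$-side, realized disjointly from $\tau$ after an isotopy) is fixed, while a generator $b\in B$ gets conjugated by $c^{\pm1}$; normalizing the parametrization (the sign ambiguity is absorbed by the conjugacy indeterminacy of the Dehn twist automorphism in the amalgamated case, noted after the definitions) this is precisely $b \mapsto cbc^{-1}$, $a\mapsto a$. That is the defining formula for $D_Z$, so $D_{\tau*}$ and $D_Z$ agree as outer automorphisms. For the HNN case $F_k = A*_{\la c\ra}$, the torus is $\partial N(S\cup\gamma)$ with $S$ non-separating: both ends of $\gamma$ lie on the same side of $S$, so $\gamma$ passes through $S$ on the side corresponding to the stable letter $t$. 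The surgery description then fixes $A$ and multiplies $t$ by $c$, matching $t\mapsto tc$, which is the definition of $D_Z$ in the HNN case. Finally I would invoke the paragraph preceding the lemma (citing Laudenbach \cite{LAUD} and \cite{McCHat}) to note that the residual ambiguities --- the choice of longitude up to $x\mapsto x+ny$ and meridional twists --- change $D_\tau$ only by mapping classes acting trivially on $F_k$, so $D_{\tau*}$ is well-defined and the identification $D_{\tau*} = D_Z$ is unambiguous.

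The main obstacle is the bookkeeping of the two sides of $\tau$ versus the two sides of $S$, i.e. making rigorous that the conjugating element spliced in by the surgery is genuinely the longitudinal generator $c$ (not some other element of $\pi_1(M)$ carried by $\tau$) and with the correct vertex group on each side. Concretely, one must check that after the isotopies used in Lemma~\ref{CR} to build $\tau$ from $S\cup\gamma$, a loop representing $a\in A$ can be pushed off $\tau$ entirely while a loop representing $b\in B$ necessarily crosses $\tau$ exactly once with the splice giving a $c$-conjugation; this is essentially the statement that the edge of $T_\tau$ stabilizes $\la c\ra$ and separates $A$ from $B$, which is built into $T_\tau = T_Z$, but it needs to be spelled out in terms of the neighborhood $N(S\cup\gamma)$. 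Once that correspondence of sides is pinned down, the rest is a direct comparison of formulas, and the well-definedness is handled by the already-cited kernel computation for $MCG(M)\to\Out(F_k)$.
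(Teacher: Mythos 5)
Your approach is essentially the same as the paper's, which does not give a formal proof of this lemma but presents it as a summary of the surgery description immediately preceding it (cut the loop at each transverse intersection with $\tau$, splice in $\beta^{\pm}$ representing the generator of $\pi_1(\tau)$, and observe this matches the defining formulas for $D_Z$ in the amalgamated and HNN cases; well-definedness comes from the Laudenbach/McCullough--Hatcher kernel computation, as you cite). Your proposal adds useful detail that the paper leaves implicit --- in particular, anchoring the surgery picture to the explicit model $\tau = \partial N(S\cup\gamma)$ built in Lemma~\ref{CR} and spelling out why the two sides of $\tau$ track the two vertex groups so that the splice really is conjugation by $c$ --- but this is a fleshing-out of the same argument rather than a different route.
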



From the lemmata \ref{geoDT} and \ref{tree}, we easily deduce the following.

\begin{proposition} Let $\t_1$ and $\t_2$ be two homotopic tori.
Then up to conjugacy  we have $D_{\t_1*} = D_{\t_2*}$  where $D_{\t_1*}$ and $D_{\t_2*}$ are the Dehn twist automorphisms induced by the geometric Dehn twists $D_{\t_1}$ and $D_{\t_2}$.
\end{proposition}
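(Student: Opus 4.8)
The plan is to deduce the statement directly from Lemma~\ref{tree} and Lemma~\ref{geoDT}, reducing it to the algebraic fact that an equivalence class of a $\ZZ$--splitting determines its Dehn twist automorphism up to conjugacy.

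First I would apply Lemma~\ref{tree}: since $\t_1$ and $\t_2$ are homotopic, their Bass--Serre trees agree, $T_{\t_1} = T_{\t_2}$, and hence the associated $\ZZ$--splittings $Z_1$ and $Z_2$ of $F_k$ are equivalent, i.e.\ they determine the same single--edge graph--of--groups decomposition of $F_k$ (the same action of $F_k$ on a tree). Next I would invoke the observation recorded in Section~2 that the Dehn twist automorphism attached to a $\ZZ$--splitting is well defined only up to conjugacy: in the amalgamated case $A*_{\la c\ra}B$ the two presentations $a\mapsto a,\ b\mapsto cbc^{-1}$ and $a\mapsto cac^{-1},\ b\mapsto b$ obtained by interchanging the roles of $A$ and $B$ represent the same outer automorphism, and the only remaining choice is the generator $c$ versus $c^{-1}$ of the edge group, which a transverse orientation on the torus pins down and which a homotopy of embedded tori preserves; the HNN case $A*_{\la c\ra}$ with $t\mapsto tc$ is analogous. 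Consequently $D_{Z_1}$ and $D_{Z_2}$ coincide in $\Out(F_k)$ up to conjugacy. Finally, Lemma~\ref{geoDT} gives $D_{\t_1*}=D_{Z_1}$ and $D_{\t_2*}=D_{Z_2}$, and combining the two identities yields $D_{\t_1*}=D_{\t_2*}$ up to conjugacy.

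The only real point to be careful about is the second step: one should check that the ambiguity in the definition of the Dehn twist automorphism associated to a $\ZZ$--splitting is \emph{precisely} conjugation (once the orientation has been fixed, which the homotopy respects), so that nothing more than conjugacy is lost in passing through the equivalence $Z_1\sim Z_2$. Modulo this bookkeeping, the proposition is an immediate corollary of the two cited lemmas, which is why it is stated as an easy deduction.
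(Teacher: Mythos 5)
Your proposal matches the paper's proof exactly: both deduce the proposition by applying Lemma~\ref{tree} (homotopic tori yield equivalent $\ZZ$--splittings), observing that a $\ZZ$--splitting determines its Dehn twist automorphism up to conjugacy, and then invoking Lemma~\ref{geoDT} to identify the induced outer automorphisms of the geometric Dehn twists. One small slip in the extra detail you supply for the middle step: the map $a\mapsto cac^{-1},\ b\mapsto b$ obtained by interchanging $A$ and $B$ is, up to an inner automorphism, the \emph{inverse} of $a\mapsto a,\ b\mapsto cbc^{-1}$ rather than the same outer class (the formula giving the same outer class is $a\mapsto c^{-1}ac,\ b\mapsto b$) — but since, as you note, the transverse orientation of the torus fixes the sign of the edge generator and is preserved under homotopy, this bookkeeping error does not undermine the argument.
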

\begin{proof}Since homotopic tori give equivalent splittings by Lemma \ref{tree}, the Dehn twist automorphisms are equal by the definition. Then by lemma \ref{geoDT} the corresponding Dehn twists induced by the geometric Dehn twists are equal.
\end{proof}




For our purposes, we need to find a lower bound on $tw_a(G, D^n_{T}(G))$ for a given simple sphere system $G$ and a $\mathbb{Z}$--splitting $T$. Now we have $D_T=D_{\t}$ where $\t$ is the essential imbedded torus given  by the $\mathbb{Z}$--splitting $T$ for which the Dehn twist is defined and  $a$ is the generator of the image of the fundamental group of $\tau$ in $\pi_1(M)$ under the map induced from the imbedding $\iota: \t \rightarrow M$. Now we take a maximal sphere system containing $G$ and homotope $\t$ to be normal with respect to this sphere system. Then $\t$ intersects the spheres of $\Sigma$ minimally, by \cite{F1}. Now we take a lift of the torus which has an axis a conjugate of $a$. The relative twisting number counts the number of iterates of a sphere which intersect image of another sphere under a Dehn twist along an axis. Hence we have,  

\[
 tw_a(G, D^n_{T}(G))=tw_{\t}(G, D^n_{\t}(G)).
\]

\section{Lower Bound on the Relative Twisting Number}
In this section we will prove that the relative twisting number of a simple sphere system and its Dehn twisted image has a lower bound, which is linear with respect to the power of the Dehn twist. We first have the following introductory lemma, recall that for a sphere system in $M$, \textit{simple} means that the complementary components in $M$ are simply connected. 

Given a sphere $S$ and a torus $\t$ an intersection circle of $S\cap \t$ which does not bound a disk in $\t$ is called a \textit{meridian} in $\tau$.
\begin{lemma}\label{pret}Given an imbedded essential torus  $\tau$ and  a simple sphere system $G$, there exists a sphere $\bS\in G$ such that at least one intersection between $\bS$ and $\tau$ is meridian in $\tau$.
\end{lemma}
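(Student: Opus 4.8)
The plan is to argue by contradiction: suppose that for every sphere $S \in G$, every intersection circle of $S \cap \tau$ bounds a disk in $\tau$ (equivalently, no intersection is a meridian). The goal is to show that this forces the image of $\pi_1(\tau)$ in $\pi_1(M) = F_k$ to be trivial — or more precisely, to force the core curve $a$ of $\tau$ to be carried entirely in a single complementary component of $G$, contradicting the hypothesis that $G$ is \emph{simple} (so that all complementary components are simply connected) together with the hypothesis that $\tau$ is \emph{essential} (so $a$ is a nontrivial element of $F_k$).

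First I would pass to the universal cover: fix a maximal sphere system $\Sigma \supset G$ and homotope $\tau$ into normal form with respect to $\Sigma$, as guaranteed by the existence theorem of \cite{F1}; since adding spheres to $G$ only increases intersections, it suffices to treat the case where $G = \Sigma$ is maximal, so every complementary component of $G$ in $M$ is a $3$-punctured $3$-sphere (in particular simply connected). Take a lift $\widetilde\tau = S^1 \times \RR$ of $\tau$ with axis a conjugate of $a$, and consider the intersection pattern $\widetilde\tau \cap \widetilde\Sigma$ in $\widetilde M$. The key observation is that each intersection circle, being homotopically trivial in $\widetilde\tau$ by assumption, bounds a disk $D \subset \widetilde\tau$; among all such circles choose one, $c$, which is \emph{innermost} on $\widetilde\tau$, so that $D$ meets no other sphere of $\widetilde\Sigma$. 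Then $D$ lies entirely in the closure of a single complementary component $\widetilde P$ of $\widetilde\Sigma$, with $\partial D = c$ lying on a lifted sphere $\widetilde S \in \widetilde\Sigma$. Normality of $\tau$ with respect to $\Sigma$ forces $c$ to also be inessential on $\widetilde S$ (otherwise the intersection number of $\tau$ with $S$ would not be minimal — one could surger along $D$ and reduce it, contradicting \cite{F1}); hence $c$ bounds a disk $D'$ on $\widetilde S$, and the sphere $D \cup D'$ bounds a ball in $\widetilde M$, which after an isotopy supported near $\widetilde P$ removes the circle $c$ from $\widetilde\tau \cap \widetilde\Sigma$ without changing the homotopy class of $\tau$ or increasing any intersection number. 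Iterating, I can eliminate \emph{all} intersection circles, so $\widetilde\tau$ is disjoint from $\widetilde\Sigma$; but then $\widetilde\tau$ lies in a single complementary piece $\widetilde P$, which is simply connected, forcing the axis of $\widetilde\tau$ — hence $a$ — to be trivial in $F_k$, contradicting that $\tau$ is essential.

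The main obstacle I anticipate is the innermost-disk surgery step: one must be careful that the surgered torus stays embedded and stays normal, and that the surgery genuinely reduces the total number of intersection components rather than trading one circle for several. This is where the precise structure of Hatcher's normal form and its torus analogue in \cite{F1} (each piece of $\tau$ in a complementary $3$-punctured $3$-sphere is a disk, cylinder, or pants) is essential — it guarantees that an innermost trivial intersection circle on $\tau$ cannot be, say, the boundary of a pants piece, so the reduction is legitimate. A secondary subtlety is the bookkeeping to ensure the homotopy of $\tau$ induced by these surgeries is itself normal at every stage, so that the conclusion "$\tau$ is essential" is not violated mid-process; again this follows from \cite{F1}, since a normal homotopy cannot create or destroy essential intersections spuriously. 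Once the surgery step is set up correctly, the contradiction with simplicity (equivalently, with $\tau$ essential) is immediate.
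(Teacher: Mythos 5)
The paper's proof is substantially simpler than what you attempt, and your version contains a genuine gap. The paper assumes all intersection circles bound disks on $\tau$, notes that the core (longitude) curve of $\tau$ can then be isotoped \emph{on $\tau$} off all of these inessential circles, and so lies in a single complementary component of $G$. Because $G$ is simple, that component is simply connected, forcing the core curve — which represents the nontrivial generator $a$ of $\pi_1(\tau)\to\pi_1(M)$ — to be null-homotopic, contradicting essentiality. There is no surgery on $\tau$ itself; only a $1$--dimensional object (the core curve) is moved, so the whole issue of whether a sphere bounds a ball never arises.

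Your argument instead tries to make the \emph{entire} torus disjoint from $\widetilde\Sigma$ by innermost-disk surgery, and the key step fails. You assert that the sphere $D\cup D'$ bounds a ball in $\widetilde M$, but $\widetilde M$ is not irreducible, and in fact for a normal disk piece $D$ this is typically false: a disk piece of a normal surface in a $3$-punctured $3$-sphere separates the other two boundary spheres, so $D\cup D'$ encircles one of the lifted spheres of $\widetilde\Sigma$ and bounds no ball on either side. (Parenthetically, the sentence ``normality forces $c$ to be inessential on $\widetilde S$'' is vacuous — every circle on a sphere bounds a disk — and does not supply the justification you need.) There is also a logical tension you do not resolve: if your isotopy really removed $c$ without increasing any other intersection, it would produce a representative with strictly smaller intersection number, contradicting that the normal form already achieves the minimum. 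Rather than exploiting that contradiction directly, the iteration is carried onward as if it terminates in a disjoint torus. Both problems disappear once you work with the core curve rather than the whole torus, which is exactly what the paper does.
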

\begin{proof} We first complete $G$ to a maximal sphere system and homotope $\t$ to be normal with respect to this maximal sphere system. Then $\t$ intersects minimally every sphere of $G$ it intersects (\cite{F1}). Assume that no isotopy class of spheres in $G$ intersect $\t$ in a way that the intersection is meridian in $\t$. On the other hand, since $\t$ is essential, the image of the fundamental group of $\tau$ in $\pi_1(M)$ is non-trivial. Hence, the core curve of $\t$ exists and by the assumption it must be contained in a complementary component of $G$ in $M$. But this is not possible since $G$ is simple.
\end{proof}

\begin{theorem}\label{twistlemma} Let $T$ be a $\mathbb{Z}$--splitting of the free group $F_k$, $\tau$ the associated torus and $D_{\tau}$ the Dehn twist along ${\tau}$. For $G \in CV_k$ and  $n\geq 2$,
\[
tw_{\tau} \big{(}G , D^n_{\tau}(G) \big{)} \geq n-1.
\]
\end{theorem}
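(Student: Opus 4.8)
The plan is to work entirely with the ends picture developed in Section~3 and to track how the Dehn twist $D_\tau$ acts on the end-partition associated to a single sphere. By Lemma~\ref{pret}, since $G$ is a simple sphere system, there is a sphere $\bS \in G$ that meets $\tau$ in at least one meridian circle. Complete $G$ to a maximal sphere system and homotope $\tau$ to normal form with respect to it, so that $\tau$ realizes its minimal intersection with every sphere of $G$ (using \cite{F1}); in particular the meridian intersection of $\bS$ with $\tau$ survives. Lift to $\widetilde M$ and choose a lift $\widetilde\tau$ of $\tau$ whose axis is a conjugate of $a$, the generator of $\iota_*\pi_1(\tau)$, together with a lift $\widetilde{\bS}$ of $\bS$ that crosses this axis $\ba$. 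This is exactly the configuration in which $tw_a(\widetilde{\bS},\widetilde{\bS})$ — equivalently $tw_\tau(G,D^n_\tau(G))$ after the identification established just before this section — is meaningful: because $\widetilde{\bS}\cap\widetilde\tau$ is a meridian, the two end-sets $E^+(\widetilde{\bS})$, $E^-(\widetilde{\bS})$ genuinely separate the two endpoints of $\ba$.

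The core computation is to understand $D^n_\tau(\widetilde{\bS})$ at the level of ends. Recall from the description of the twist on loops in Section~4: $D_\tau$ surgers anything crossing $\tau$ and splices in a copy of the core curve $\beta^\pm$; iterating $n$ times on the side of the meridian crossing inserts $\beta^n$, i.e. conjugation/translation by $a^n$ on the relevant side. Concretely, $D^n_\tau$ fixes the lift $\widetilde\tau$ and its axis $\ba$ setwise, acts as the identity outside the lifts of the tubular neighborhood, and on the component of $\widetilde M \setminus \widetilde\tau$ on the ``$+$'' side it agrees with translation by $a^n$ along $\ba$ composed with the identity near the axis. Hence the partition $E^\pm(D^n_\tau\widetilde{\bS})$ is obtained from $E^\pm(\widetilde{\bS})$ by applying this $a^n$-translation to the part lying on one side of $\ba$ while leaving the other side fixed. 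The point is then a bookkeeping one: for every $0 \le k \le n-1$ one checks that $a^k \widetilde{\bS}$ and $D^n_\tau\widetilde{\bS}$ still satisfy the intersection criterion, i.e. all four of the sets $E^{\mp}(a^k\widetilde{\bS})\cap E^{\mp}(D^n_\tau\widetilde{\bS})$ are nonempty, because $a^k$ for $k$ in this range slides the crossing point of $\widetilde{\bS}$ with $\ba$ to a position still inside the ``fundamental domain'' along $\ba$ cut out by the $a^n$-shift, so the four quadrants of ends are not collapsed. By the definition of $tw_a$ this yields $tw_a(\widetilde{\bS},\widetilde{\bS}) \ge n-1$, and therefore $tw_\tau(G,D^n_\tau(G)) \ge tw_a(\widetilde{\bS},D^n_\tau\widetilde{\bS}) \ge n-1$.

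I expect the main obstacle to be the last step: making precise, purely in terms of ends of $\widetilde M$, why the shift by $a^k$ for $k \le n-1$ does not destroy any of the four nonempty intersections of end-sets while a shift by $a^n$ or more would. This requires a careful analysis of how the endpoints of $\ba$ sit relative to $E^\pm(\widetilde{\bS})$ and $E^\pm(D^n_\tau\widetilde{\bS})$, using that $\widetilde{\bS}\cap\widetilde\tau$ is a single meridian circle (so $\widetilde{\bS}$ crosses $\ba$ exactly once) together with the fact that $D_\tau$ only modifies $\widetilde{\bS}$ near that crossing. Two secondary points need care: first, that after completing $G$ to a maximal system and normalizing $\tau$ the meridian produced by Lemma~\ref{pret} is still present — this is the content of the normal-form/minimality statement of \cite{F1}; second, that one may indeed take the \emph{same} sphere $\widetilde{\bS}$ on both sides of the pairing, which is legitimate since $tw_a$ is defined for pairs of (not necessarily distinct) spheres both crossing $\ba$. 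The linearity in $n$ is then immediate from the construction, since each successive application of $D_\tau$ contributes one more admissible power of $a$.
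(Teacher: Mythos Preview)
Your proposal is correct and follows essentially the same route as the paper: reduce to a single sphere $S\in G$ with a meridian intersection via Lemma~\ref{pret}, lift, observe that a lift of $D^n_\tau$ acts as the identity on one side of $\ttau_0$ and as translation by $a^n$ on the other, and then verify the intersection criterion between $\widetilde{D}^n_\tau(\widetilde S_0)$ and the translates $a^j\widetilde S_0$ for $j=1,\ldots,n$. The paper resolves exactly the step you flag as the main obstacle by introducing an auxiliary ray $\ell_0$ inside the fundamental domain $\Delta$ (between $\widetilde S_0$ and $a\widetilde S_0$) that crosses $\ttau_0$ once and joins an end in $E_{0,0}^-$ to one in $E_{0,0}^+$; since $\widetilde{D}^n_\tau$ fixes the $-$ endpoint and sends the $+$ endpoint to $a^n e_0^+$, the image ray $\widetilde{D}^n_\tau(\ell_0)$ witnesses directly that all four end-sets $E^\pm(\widetilde S_{0,j})\cap E^\pm(\widetilde{D}^n_\tau(\widetilde S_0))$ are nonempty for each $j$ in the range, which is precisely the intersection criterion.
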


\begin{figure}
\setlength{\unitlength}{0.01\linewidth}
\begin{picture}(100,45)
\put(10,0){\includegraphics[width=0.75\textwidth]{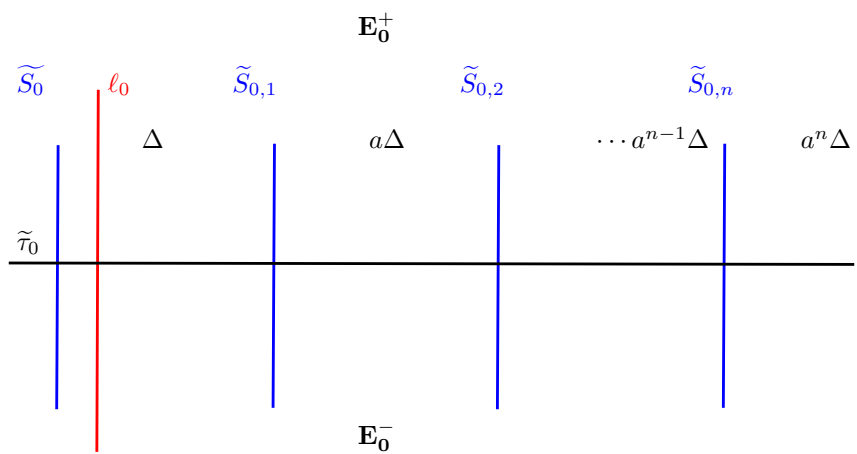}}

\put(41,37){$\mathbf{E^+_0}$}
\put(41,1){$\mathbf{E^-_0}$}
\put(11,32){$\color{blue}{\widetilde{S_0}}$}
\put(19,32){$\color{red}{{\ell_0}}$}
\put(11,18){$\widetilde\tau_0$}
\put(30,32){$\color{blue}{\widetilde S_{0,1}}$}
\put(50,32){$\color{blue}{\widetilde S_{0,2}}$}
\put(70,32){$\color{blue}{\widetilde S_{0,n}}$}
\put(22,27){$\Delta$}
\put(42,27){$a\Delta$}
\put(62,27){$\cdots a^{n-1}\Delta$}
\put(80,27){$a^n\Delta$}
\end{picture}
\caption{Fundamental domains, sets of ends separated by $\ttau_0$ and an arc $\ell_0$ connecting them. }
\label{fig:endtwist1}
\end{figure}

\begin{figure}
\setlength{\unitlength}{0.01\linewidth}
\begin{picture}(100,35)
\put(10,0){\includegraphics[width=0.75\textwidth]{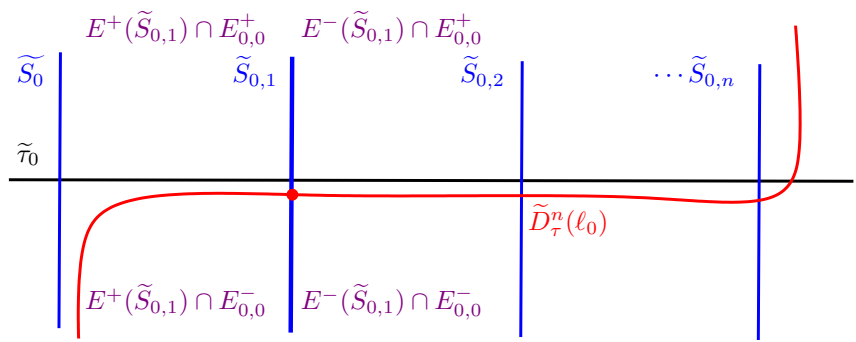}}
\put(56,10){$\color{red}{\widetilde D^n_{\tau}(\ell_0)}$}
\put(17,27){$\color{violet}E^+(\widetilde S_{0,1})\cap E_{0,0}^+$}
\put(36,27){$\color{violet}E^-(\widetilde S_{0,1})\cap E_{0,0}^+$}
\put(17,3){$\color{violet}E^+(\widetilde S_{0,1})\cap E_{0,0}^-$}
\put(36,3){$\color{violet}E^-(\widetilde S_{0,1})\cap E_{0,0}^-$}
\put(11,16){$\widetilde\tau_0$}
\put(11,23){$\color{blue}{\widetilde{S_0}}$}

\put(30,23){$\color{blue}{\widetilde S_{0,1}}$}
\put(50,23){$\color{blue}{\widetilde S_{0,2}}$}
\put(67,23){$\color{blue}{\cdots\widetilde S_{0,n}}$}

\end{picture}
\caption{The image of $\ell_0$ under Dehn twisting and sets of ends corresponding to the intersection $\widetilde D^n_{\tau}(\widetilde S_0)\cap \widetilde S_{0,1}$.}
\label{fig:endtwist3}
\end{figure}

\begin{proof} We will prove this theorem by taking $G$ as a simple sphere system instead of a marked metric graph. Let $\Sigma$ be a maximal sphere system completing $G$ and homotope $\tau$ to be the normal with respect to $\Sigma$.

Now, let $S\in G$ be such that $S$ and $\t$ intersect in a way that $\mu=S\cap \t$ is meridian in $\tau$. The existence of a meridional intersection circle is given by Lemma \ref{pret}. As before, let $D_{\tau}$ be the Dehn twist along the normal torus $\tau.$ More precisely, let $N(\tau)$ be a tubular neighborhood of $\tau$ in which $D_{\tau}$ is supported. Let $\ttau$ be the full preimage of $\tau$ in $\widetilde{M}$ and $\ttau_0$ a component, $N(\ttau)$ be the full preimage of $N(\tau)$ and  $N(\ttau_0)$ be the component containing $\ttau_0$. Let $\widetilde S$ be the full preimage of $S$, and $\widetilde{S_0}$ a component such that $\widetilde{S_0}\cap \ttau_0= \widetilde\mu_0$ , a lift of $\mu$. Let us call $a$ the generator of $\pi_1(\tau)$ corresponding to $\ttau_0$, hence we have a covering transformation $a: \widetilde{M} \rightarrow \widetilde{M}$ which stabilizes $\ttau_0$. Let $\Delta$ be the region between $\widetilde S_0$ and $a\widetilde S_0$ which is the fundamental domain of $\la a\ra$ on $\widetilde {M}$. Set $\widetilde S_{0,j}=a^j \widetilde S_0$. Then $a^j\Delta$ is the region bounded by $\widetilde S_{0,j}$ and $\widetilde S_{0, j+1}$.
\vspace{0.3cm}

Since $tw_{\tau} \big{(}G , D^n_{\tau}(G) \big{)} \geq tw_{\t} \big{(}S , D^n_{\t}(S) \big{)}$, it is sufficient to prove that

\[
tw_{\t} \big{(}S , D^n_{\t}(S) \big{)} \geq n-1.
\]

Let us denote by $E(\widetilde{M})$ the ends of $\widetilde{M}$. As it was discussed in Section 2.2, there is a pair of ends of $\widetilde{M}$ fixed by $a$ and $\ttau_0$ separates the remaining set of ends into two disjoint sets $E_0^{+}$ and $E_0^{-}$. Since $\ttau_0$ is separating in $\widetilde M$, there is a ray $\ell$ which connects an end $\sf{e^{+}}$$\in E_0^+$ to an end $\sf{e^{-}}$$\in E_0^-$, intersecting $\ttau_0$ only once. Observe that since $\t$ is essential, there is always such ray which is disjoint from $\ttau- \ttau_0$.

Let $X^+$ and $X^{-}$ be the components of $\widetilde{M}-N(\ttau)$ whose closures meet  $N(\ttau_0)$. Since $D_{\tau}$ is identity on ${M}-N(\tau)$ we choose a lift $\widetilde D_{\tau}$ which is the identity on $X^{-}$ and  a translation on $X^{+}$. Without loss of generality we may assume this is translation by $a$. Hence, $\widetilde D_{\tau}(\sf{e^{-}})= \sf{e^{-}}$ and $\widetilde D^{m}_{\tau}(\sf{e^{+}})= a^{m}\sf {e^{+}}$.

Let $E_{0,0}^- \subset E_0^- $ and $E_{0,0}^+ \subset E_0^+ $ be two disjoint sets of ends in $\Delta$. Now, as described above, in this fundamental domain there is a line $\ell_0$ which connects an end $\sf{e_0^{-}}$ in $E_{0,0}^-$ to an end $\sf{e_0^{+}}$ in $E_{0,0}^+$ intersecting $\ttau$ once in $\ttau_0$. (See the Figure \ref{fig:endtwist1}). Now, after $n$  times Dehn twisting along $\ttau$, the image ray $\widetilde D^{n}_{\tau}(\ell_0)$ will connect the point $\sf{e_0^{-}}$ in $E_{0,0}^-$ to the point $\widetilde D^{n}_{\tau}(\sf{e_0^{+}})=a^n \sf e_0^+={e_{n}^{+}}$ in $E_{0,{n-1}}^+= \widetilde D^n_{\tau}(E_{0,0}^+)$. (See the schematic picture Figure \ref{fig:endtwist3}).

On the other hand, let us denote by $E^+(\widetilde S_{0,s})$ and $E^-(\widetilde S_{0,s})$ the two disjoint sets of ends corresponding to the sphere $\widetilde S_{0,s}$,  for $s \in \{1,\cdots ,n \}$. Now, without loss of generality assume that  $\sf{e_0^{-}}$$\in E^-(\widetilde S_{0,1})$. Then, since the image ray $\widetilde D^{n}_{\tau}(\ell_0)$ still intersects $\ttau_0$ only once, and does not intersect neither $\widetilde S_{0}$ (this would create a bigon) nor any other lift of the torus, $\widetilde D^{n}_{\tau}(\sf{e_0^{+}})={e_{n}^{+}}$$\in E^+(\widetilde S_{0,1})$. So for $s=1$ we have $4$ disjoint, non empty sets of ends $E^+(\widetilde S_{0,1})\cap E_{0,0}^- $, $E^+(\widetilde S_{0,1})\cap E_{0,0}^+$, $E^-(\widetilde S_{0,1})\cap E_{0,0}^-$ and $E^-(\widetilde S_{0,1})\cap E_{0,0}^+ $. By intersection criterion,  this shows that $\widetilde D^n_{\tau}(\ell_0)$ intersects $\widetilde S_{0,1}$. Since $E^+(\widetilde S_{0,1})\subset E^+(\widetilde S_{0,n})$ and $E^-(\widetilde S_{0,n})\subset E^-(\widetilde S_{0,0})$ we similarly have the nonempty disjoint sets of ends $E^+(\widetilde S_{0,n})\cap E_{0,{n-1}}^- $, $E^+(\widetilde S_{0,n})\cap E_{0,{n-1}}^+$, $E^-(\widetilde S_{0,n})\cap E_{0,{n-1}}^-$ and $E^-(\widetilde S_{0,n})\cap E_{0,{n-1}}^+ $. Again by intersection criterion this means that $\widetilde D^n_{\tau}(\ell_0)$ intersects $\widetilde S_{0,n}$ as well. As a result, $\widetilde D^n_{\tau}(\ell_0)$ intersects all iterates $\widetilde S_{0,s},\,\, s \in \{1,\cdots ,n \}$ of $\widetilde S_0$.

Hence $\widetilde D^n_{\tau}(\widetilde S_0)$ intersects all $n-1$ iterates of $\widetilde{S_0}$. More precisely,
\[ \widetilde D^{n}_{\tau}(\widetilde S_0)\cap {\widetilde S_{0,j}}\neq\emptyset \,\,\text{for}\, j=1,\cdots, n,\,\,\, n\geq2,
\]

and thus we conclude that,
\[
tw_{\t} \big{(}S , D^n_{\t}(S) \big{)} \geq n-1.
\]

\end{proof}

\section{The Almost Fixed Set}

In this section we will prove the following theorem  which says that there is an upper bound on the diameter of the almost fixed set of a Dehn twist and this upper bound depends only on the rank of the free group.
\begin{diamtheorem}Let $T$ be a $\mathbb{Z}$--splitting of $F_k$ with $k >2$,  and $D_{T}$ denote the corresponding Dehn twist. Then, for all sufficiently large constants $C$, there exists a $C'=C'(C,k)$ such that the diameter of the almost fixed set
\[
F_C= \{x \in FF_k: \exists n \neq 0 \,\text{such that}\,\, d(x, D_{T}^n(x)) \leq C \} \]
corresponding to $ \langle D_{T} \rangle$ is bounded above by $C'$.
\end{diamtheorem}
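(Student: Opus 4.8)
The strategy is to combine the linear lower bound on the relative twisting number from Theorem~\ref{twistlemma} with the Clay--Pettet machinery relating large relative twisting to geodesics through the thin part of Outer space, and then push everything down to $FF_k$ via the Bestvina--Feighn projection. First I would fix a $\ZZ$--splitting $T$, its associated essential torus $\tau$ in $M$ given by Lemma~\ref{CR}, and the generator $a$ of the image of $\pi_1(\tau)$ in $\pi_1(M)=F_k$; note $\langle a\rangle$ is (coarsely) the rank-$1$ ``free factor'' at which the Dehn twist $D_T$ twists. Suppose $x,y\in F_C$; pick simple sphere systems $G_x,G_y\in CV_k$ with $\pi(G_x)$ close to $x$ and $\pi(G_y)$ close to $y$ (using that $\pi\from CV_k\to FF_k$ is coarsely well defined and coarsely surjective, \cite{BestHyp}). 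The point $x\in F_C$ means there is $n_x\neq 0$ with $d_{FF_k}(x,D_T^{n_x}x)\le C$, and similarly for $y$. The plan is to show: if $x$ and $y$ are both far from the coarse projection of $\langle a\rangle$-related data, then $|n_x|$ and $|n_y|$ are forced to be bounded in terms of $C$ and $k$ — and once the admissible twisting powers are bounded, being ``almost fixed'' collapses to being within a bounded neighborhood of a single point.

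The core estimate goes as follows. By Theorem~\ref{twistlemma}, $tw_\tau(G_x, D_T^{n}(G_x))\ge |n|-1$ for $|n|\ge 2$. On the other hand, a lemma of Clay--Pettet (\cite{CP2}) says that when the relative twisting number $tw_a(G,G')$ is large, there is a geodesic in $CV_k$ joining $G$ to $G'$ along which the curve $a$ (here: the core/longitude curve of $\tau$) becomes short — more precisely, the thin part $\Thin_a$ is entered, and the length of the connecting path outside $\Thin_a$, hence the ``visible'' $FF_k$-distance, is controlled. Projecting this folding/geodesic path to $FF_k$ using the Bestvina--Feighn lemma (\cite{BestHyp}) — folding paths project to unparametrized quasigeodesics — one gets that the $FF_k$-geodesic from $\pi(G_x)$ to $\pi(D_T^{n_x}G_x)$ passes within bounded distance of the coarse free-factor projection $\pi_a$ determined by $a$ (the vertex of $FF_k$ given by the smallest free factor carrying a large power of $a$, via a short core curve). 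But $d_{FF_k}(x,D_T^{n_x}x)\le C$ forces this geodesic to be short, hence $x$ itself is within $C+O(1)$ of $\pi_a$. The same holds for $y$. Therefore $d_{FF_k}(x,y)\le 2C + O(k)$, and the diameter of $F_C$ is bounded by a constant $C'=C'(C,k)$. (The role of ``$C$ sufficiently large'' is to guarantee, via $|n|-1\le tw_\tau$, that the relevant $|n_x|$ exceeds the universal threshold in the Clay--Pettet/BGIT-type input so that the short-marking conclusion applies; if $|n_x|$ is already below that threshold there is nothing to prove for that power, and one argues directly that the finitely many small powers of $D_T$ move points of $FF_k$ by a definite amount away from $\pi_a$ unless they are near $\pi_a$.)

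I would organize the write-up as: (1) set up $\tau$, $a$, and the coarse projection point $\pi_a\in FF_k$; (2) quote Theorem~\ref{twistlemma} to get $tw_\tau(G_x, D_T^{n}G_x)\ge |n|-1$; (3) quote the Clay--Pettet lemma to obtain, for $|n|$ past a universal threshold $n_0$, a connecting geodesic in $CV_k$ through $\Thin_a$; (4) project via Bestvina--Feighn to conclude the $FF_k$-geodesic from $x$ to $D_T^{n}x$ comes uniformly close to $\pi_a$; (5) combine with $d(x,D_T^nx)\le C$ to bound $d(x,\pi_a)$, and symmetrically $d(y,\pi_a)$; (6) conclude $\diam(F_C)\le 2C+O(k)=:C'$.

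The main obstacle is step (4): transferring the ``$a$ gets short along the $CV_k$-geodesic'' conclusion into a genuine ``the $FF_k$-geodesic fellow-travels a specific coarse projection point'' statement, \emph{without} having a subfactor projection available for the rank-$1$ factor $\langle a\rangle$ (as the paper emphasizes, subfactor projections are undefined for rank-one factors). The fix is to bypass the projection entirely: use that a graph in the $a$-thin part of $CV_k$ has a proper subgraph whose smallest-free-factor image in $FF_k$ is a fixed bounded-diameter set (independent of how short $a$ is), and that $D_T$ acts as the identity away from a neighborhood of $a$, so $D_T$ fixes that $FF_k$-vertex coarsely; this pins the relevant portion of the folding path (hence its $FF_k$-shadow) near a $D_T$-coarsely-fixed vertex. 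Then Bestvina--Feighn quasigeodesity of the shadow does the rest. The bookkeeping of constants — how large $C$ must be, how $n_0$ depends on $\delta_{FF_k}$ and the quasigeodesic constants, and absorbing the additive $4$ from the coarse well-definedness of $\pi$ — is routine but must be tracked so that $C'$ depends only on $C$ and $k$.
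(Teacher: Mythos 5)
Your proposal follows essentially the same route as the paper's proof: lower-bound $tw_\tau(G, D_T^n G)\geq n-1$ via Theorem~\ref{twistlemma}, apply Clay--Pettet (Theorem~\ref{clayp2}) to produce a folding path on which $a$ becomes short, project to $FF_k$ via the Bestvina--Feighn quasigeodesity of folding-path shadows, and combine the triangle inequality with $d(x, D_T^n x)\leq C$ to bound $d_{FF_k}(x,\alpha)$, where $\alpha$ is the smallest free factor containing $a$. The one step you flag as the ``main obstacle'' is handled in the paper by citing Lemma~\ref{L1} (Lemma~3.3 of \cite{BestHyp}) directly, which converts a short translation length of $a$ on a graph $G_t$ into a bound on $d_{FF_k}(\alpha,\pi(G_t))$; this replaces the more elaborate workaround you sketch (subgraphs plus $D_T$ acting as the identity away from $a$) with a single cited inequality, but the underlying idea is the same.
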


\subsection{Finding the folding line}
For $G_1$ and $G_2$ in $CV_k$, let $m(G_1, G_2)$ be the infimum of the set of maximal slopes of all \textit{change of markings} (a map linear on edges) $f: G_1\longrightarrow G_2$. Then we define a function
$d_L: CV_k\times CV_k \longrightarrow \mathbb{R}_{\geq 0}$ by
\[d_L(G_1, G_2)= \log m(G_1, G_2).
\]
Despite being non-symmetric, since this is its only failure to be distance,  $d_L$ is referred as the \textit{Lipschitz metric} on $CV_k$.

For an interval $I\subset \mathbb{R}$ the folding lines $\widetilde{g}: I \longrightarrow CV_k$ are the paths connecting any given two points in the interior of $CV_k$, obtained as follows:

For $G_1, G_2 \in CV_k$ let $f: G_1\longrightarrow G_2$ be a change of marking map whose Lipschitz constant realizes the maximal slope. We find a path based at $G_1$ which is contained in an open simplex of unprojectivized outer space and  parametrize it by arclength. Then we concatenate this path with another geodesic path outside the open simplex obtained by folding process. The resulting path  $g: [0, d_L(G_1, G_2)]\longrightarrow CV_k$ is a geodesic by Francaviglia and Martino (\cite{FM1}), which we will call \textit{folding line}.

For a given $F_k$--tree $\Gamma$ and an element $a\in F_k$, let $\ell_{\Gamma}(a)$ denote the minimal translation length of $a$ in $\Gamma$.
To prove the Theorem \ref{diam}, we use the following result from \cite{CP2}:
\begin{theorem}[\cite{CP2}]\label{clayp2} Suppose $G,G^{\prime} \in CV_k$ with $d = d_L(G,G^{\prime})$ such that
$tw_a(G,G^{\prime}) \geq n+2$ for some $a \in F_k$. Then there is a geodesic (folding line) $g: [0, d]\rightarrow
CV_k$ such that $g(0) = G$, $g(d) = G^{\prime}$ and for some $t \in [0, d]$, we
have $\ell_{g(t)}(a) \leq \frac{1}{n}$. In other words, $g([0, d])\cap CV_k^ {1/n}
\neq \emptyset$
\end{theorem}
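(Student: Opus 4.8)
The plan is to reconstruct the folding-path argument of Clay--Pettet \cite{CP2}. Fix an optimal change-of-markings map $f\colon G\to G'$ whose Lipschitz constant realizes $m(G,G')=e^{d}$, equip $G$ with the gate (illegal-turn) structure determined by $f$, and let $g\colon[0,d]\to CV_k$ be the induced folding line, so that $g(0)=G$, $g(d)=G'$, each $g(t)$ comes with a folding map $h_t\colon g(t)\to G'$, the lengths of loops are monotone non-increasing along the (unnormalized) folding semiflow, and $g$ is geodesic by Francaviglia--Martino \cite{FM1}. Let $\alpha$ be the immersed loop in $G$ representing the conjugacy class of $a$. Everything is a matter of tracking $\ell_{g(t)}(a)$ together with the legal structure of $\alpha$ along $g$.

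\emph{Step 1: reading the twisting hypothesis as an unwinding requirement on $f$.} Lifting to universal covers, let $A_G\subset\widetilde G$ and $A_{G'}\subset\widetilde{G'}$ be the axes of $a$. By the definition of $tw_a$ through the Guirardel core there are edges $e\subset A_G$, $e'\subset A_{G'}$ with $e\times e'\in\calC$ and $a^{n+2}e\times e'\in\calC$; by connectivity of the core the whole sub-segment $\sigma\subset A_G$ from $e$ to $a^{n+2}e$ --- of length $(n+2)\,\ell_G(a)$ --- consists of edges $a^{j}e$ each of which crosses $e'$, i.e.\ satisfies the intersection criterion (four nonempty end-sets) against $e'$. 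Using the nestedness of the end-partitions $E^{\pm}(a^{j}e)$ along the axis, one extracts a bi-infinite geodesic of $\widetilde G$ overlapping $A_G$ along $\sigma$ whose $f$-image tightens into $\widetilde{G'}$ so as to cross $A_{G'}$ in a segment of bounded size. The upshot I want is: in the gate structure of $f$, the loop $\alpha$ carries at least $n$ illegal turns \emph{in series} around the $a$-axis --- essentially one in each fundamental domain of $\langle a\rangle$ along $\sigma$ --- so that $f$ collapses $\sigma$ to a set of diameter $\lesssim\ell_{G'}(a)$. (The two extra units in the hypothesis $n+2$ are the slack absorbed by bounded cancellation and by the difference between edges ``on'' and edges ``crossing'' an axis.)

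\emph{Step 2: the forced excursion into the thin part.} As $t$ increases the illegal turns of $\alpha$ are folded off one at a time; let $\alpha_t$ be the tightened representative of $a$ in $g(t)$ and let $t^{\ast}\in[0,d]$ be the time at which the last of the $n$ series turns is resolved. On $[0,t^{\ast}]$ the unnormalized $a$-length is non-increasing, while the folding map $h_{t}\colon g(t)\to G'$ is still expanding on the relevant subgraph by a factor comparable to the number of unresolved series turns; combining these with bounded cancellation (the folding-semiflow version of the Bestvina--Handel estimates) shows that at $t^{\ast}$ the $(n+2)$-fold winding, once folded onto the $a$-loop, occupies a definite proportion of the total volume while $a$ itself has been shrunk by the corresponding factor, whence
\[
\ell_{g(t^{\ast})}(a)\;\le\;\frac1n\cdot\operatorname{vol}\bigl(g(t^{\ast})\bigr)\;=\;\frac1n ,
\]
which is exactly $g([0,d])\cap CV_k^{1/n}\neq\emptyset$.

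The main obstacle is precisely the passage between the two languages in Step~1 and the quantitative estimate in Step~2: turning the purely combinatorial count ``$tw_a(G,G')\ge n+2$'' of rectangles in $\calC$ into the metric statement that $f$ contracts an arc of length $(n+2)\ell_G(a)$ on the $a$-axis to bounded diameter, and then into the bound that folding that arc drives the normalized length of $a$ below $1/n$. The delicate points are (i) showing the $n$ relevant illegal turns of $\alpha$ are genuinely spread out along the axis rather than clustered --- this is where the monotone nesting of the $E^{\pm}(a^{j}e)$ is essential; (ii) choosing $f$ (optimal, and adapted to $\alpha$) so that the winding is unwound monotonically rather than being rewound partway, so that a single time $t^{\ast}$ works; and (iii) the bookkeeping between the volume-one representatives appearing in the statement and the unnormalized folding semiflow in which the monotonicity is natural. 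Granting (i)--(iii), the displayed inequality is a bounded-cancellation computation along $g$.
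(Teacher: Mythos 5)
First, a point of context: the paper does not prove this statement at all. Theorem \ref{clayp2} is imported verbatim from Clay--Pettet \cite{CP2} and used as a black box in the proof of Theorem \ref{diam}, so there is no in-paper argument to compare yours against; what you are really attempting is a reconstruction of the proof in \cite{CP2}. Judged on its own terms, the reconstruction has genuine gaps at exactly the two places you flag, and flagging them does not close them. In Step 1, the passage from $tw_a(G,G')\ge n+2$ to ``the optimal map $f$ has $n$ illegal turns in series along the axis of $a$'' is asserted, not proved, and it is far from clear that it holds for an arbitrary optimal $f$: the rectangles $a^{j}e\times e'$ in the Guirardel core record combinatorial crossing data of the two trees, and what they directly yield (via a candidate-loop argument, using that the slice of the core over $e'$ is a subtree of $\widetilde G$ containing the segment of the $a$-axis from $e$ to $a^{n+2}e$) are metric inequalities of the shape $m(G,G')\succeq n\,\ell_{G'}(a)$ and, symmetrically, $m(G',G)\succeq n\,\ell_{G}(a)$. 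Translating this into a statement about the gate structure of a particular optimal map is an extra step you have not supplied. In Step 2, the displayed inequality $\ell_{g(t^{*})}(a)\le \tfrac1n\operatorname{vol}(g(t^{*}))$ does not follow from anything established: along the normalized folding semiflow there is no monotonicity forcing the length of $a$ to dip below $1/n$ at the instant the last ``series'' turn resolves, since the remainder of the graph may be stretched at a comparable rate; ``occupies a definite proportion of the total volume'' is precisely the quantitative content of the theorem and cannot be waved through with bounded cancellation.

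The actual Clay--Pettet argument sidesteps both difficulties by not following a fixed optimal folding path at all --- note the theorem claims only that \emph{some} geodesic enters $CV_k^{1/n}$, which is a strong hint that the proof constructs a special one. Assuming $\ell_G(a)>1/n$ and $\ell_{G'}(a)>1/n$ (otherwise $t=0$ or $t=d$ works), one uses the two Lipschitz lower bounds above to produce an intermediate point $H$ with $\ell_H(a)\le 1/n$ satisfying $d_L(G,H)+d_L(H,G')\le d_L(G,G')$; the triangle inequality then forces equality, so the concatenation of geodesics $G\to H\to G'$ is itself a geodesic passing through the thin part, and by Francaviglia--Martino \cite{FM1} it may be realized as a folding line through (a point uniformly close to) $H$. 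If you wish to salvage your approach, items (i) and (ii) of your closing list must become actual lemmas with proofs; as written they restate the theorem rather than derive it.
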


\subsection{Converting short length to distance}

Let $\pi$ be the coarse projection $\pi:CV_k\rightarrow FF_k$.
\begin{lemma}[Lemma 3.3 \cite{BestHyp}]\label{L1} Let $a \in F_k$ be a simple class, $G$ a point in $CV_k$ so that the loop corresponding to $a$ in $G$ intersects some edge $\leq m$ times. Then,

\[ d_{FF_k}({\alpha},\pi(G))\leq 6m+13
\]
where ${\alpha}$ is the smallest free factor containing the conjugacy class of $a$.
\end{lemma}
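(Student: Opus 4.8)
The statement is the special case of Bestvina--Feighn's comparison between combinatorial complexity in $CV_k$ and distance in $FF_k$, and I would reconstruct its proof by induction, reducing everything to the case where the loop $\gamma$ representing $a$ does not fill $G$. \emph{Base case ($\gamma$ does not fill $G$):} let $\Gamma_\gamma\subseteq G$ be the union of the edges crossed by $\gamma$, together with their endpoints. Since $\gamma\co S^1\to G$ has connected image, $\Gamma_\gamma$ is a connected subgraph, and since it carries the nontrivial class $[a]$ it is not a tree. A connected subgraph of a connected graph carries a free factor --- extend a spanning tree of $\Gamma_\gamma$ to a spanning tree of $G$, so the non-tree edges of $\Gamma_\gamma$ extend to a basis of $F_k$ --- hence $\pi_1(\Gamma_\gamma)$ is a free factor containing $[a]$, so $\alpha\leq\pi_1(\Gamma_\gamma)$ and $d_{FF_k}(\alpha,[\pi_1(\Gamma_\gamma)])\leq 1$. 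If $\gamma$ does not fill, $\Gamma_\gamma$ is a proper subgraph containing a circle, so $[\pi_1(\Gamma_\gamma)]$ is an admissible value of $\pi(G)$; every admissible value lies within $4$ of it by the coarseness estimate $d_{FF_k}(\pi(\Gamma_1),\pi(\Gamma_2))\leq 4$ quoted above, whence $d_{FF_k}(\alpha,\pi(G))\leq 5\leq 6m+13$.

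\emph{Inductive step ($\gamma$ fills $G$):} after tightening (which does not increase any crossing number) I may assume $\gamma$ is immersed; then $m\geq 2$, since an immersed loop crossing every edge at most once is an embedded circuit and cannot fill a graph of negative Euler characteristic. Since $[a]$ is simple, $\alpha$ is a \emph{proper} free factor, and Whitehead theory then supplies a vertex $v$ of $G$ at which the Whitehead graph of $\gamma$ is disconnected or has a cut vertex. Performing the corresponding Whitehead move --- realized as an elementary homotopy equivalence $G\rightsquigarrow G'$: blow $v$ up along the partition into a small tree and recollapse --- on the light side of the cut produces $G'$ in which the loop $\gamma'$ representing $a$ crosses each edge at most $m$ times and has strictly smaller total combinatorial length $\sum_e \#(\gamma'\cap e)$, and iterating such moves eventually makes $\gamma'$ non-filling. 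Each such move displaces $\pi$ by a bounded amount: $G$ and $G'$ share the subgraph on the heavy side of the cut, whose fundamental group is a proper free factor visible in both, and chasing the two smallest-free-factor choices through this common subgraph gives $d_{FF_k}(\pi(G),\pi(G'))\leq 6$ (equivalently, one invokes Bestvina--Feighn's estimate that a single elementary move moves $\pi$ boundedly). Applying the base case at the end and summing the displacements over the controlled sequence of moves then yields $d_{FF_k}(\alpha,\pi(G))\leq 6m+13$.

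\emph{The main obstacle} is the quantitative version of the second half of the previous paragraph: one must organize the Whitehead moves so that (i) no crossing number ever exceeds $m$, (ii) the descent to a non-filling loop uses only $O(m)$ moves --- \emph{not} the naive $O(m\,|E(G)|)$ one gets from bounding total length crudely --- and (iii) each move displaces $\pi$ by a uniformly bounded amount; extracting the linear bound with a coefficient independent of $k$ is precisely the delicate bookkeeping of Bestvina--Feighn, which I would follow line by line. Everything else here --- the subgraph/free-factor correspondence and the coarse well-definedness of $\pi$ --- is routine and already available in the excerpt.
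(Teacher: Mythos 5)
The paper does not actually prove this lemma; it is imported verbatim as Lemma~3.3 of \cite{BestHyp}, so the comparison has to be with Bestvina--Feighn's argument rather than with anything in this source. Your base case is fine and is essentially the standard subgraph/free-factor observation. The problem is the inductive step, and it is a genuine gap rather than deferred bookkeeping. First, the hypothesis controls the crossing number of only \emph{one} edge $e$, not of every edge; your deduction ``$\gamma$ fills and is immersed, hence $m\ge 2$'' and, more importantly, your descent quantity $\sum_e \#(\gamma\cap e)$ both implicitly treat $m$ as a bound on all crossing numbers. Since the total combinatorial length of $\gamma$ is unbounded in terms of $m$, a Whitehead-move descent that strictly decreases total length cannot terminate in a number of steps controlled by $m$, so no amount of careful bookkeeping along that route produces a bound of the form $6m+13$ (nor one with coefficient independent of $k$). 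Second, the step ``each Whitehead move displaces $\pi$ by at most $6$'' is asserted, not proved, and it is exactly where the content of the lemma lives.

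The actual Bestvina--Feighn proof has a different shape: one fixes the distinguished edge $e$ with $\#(\alpha\cap e)\le m$ and inducts on that single crossing number, performing a \emph{surgery on the loop} rather than moves on the graph --- cut the edge path representing $\alpha$ at its passages through $e$ and reconnect to obtain a class $\alpha_1$ crossing $e$ strictly fewer times, then show via an auxiliary lemma that $d_{FF_k}(\alpha,\alpha_1)\le 6$ because the two classes are simultaneously carried in a controlled way. Iterating at most $m$ times reaches a class missing $e$, which lands in your base case and accounts for the additive $13$. So the mechanism you propose (Whitehead graph cut vertices and length reduction) is not the one the cited reference supplies, and your deferral ``I would follow Bestvina--Feighn line by line'' points at an argument that does not exist in the form you need. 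To repair the proposal you would need to replace the length descent by a descent on $\#(\gamma\cap e)$ and supply the uniform single-step displacement estimate.
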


Using this lemma we prove the following:

\begin{lemma}\label{L2} Let $\alpha$ be a free factor containing the conjugacy class of an element $a \in F_k$, $G$ a point in $CV_k$ and $\ell_{G}(a)\leq m$. Then, there is a  constant $B$ and a number $A$ depending only on the rank of the free group such that

\[
d_{FF_k}({\alpha},\pi(G))\leq Am+B \]

\end{lemma}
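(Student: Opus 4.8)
The plan is to deduce Lemma \ref{L2} from Lemma \ref{L1} by producing, inside $G$, a single edge that the tightened loop representing $a$ crosses at most $O(m)$ times, where the implied constant depends only on $k$. The subtlety is that $\ell_G(a)\le m$ does \emph{not} bound the combinatorial length of $a$ in $G$, since a very short edge can be traversed many times; in particular one cannot rescale $G$ to have all edges of equal length, as this may blow up $\ell_G(a)$. What \emph{is} true is that a long edge cannot be crossed too often, and a reduced graph of rank $k$ always has an edge of length at least $1/(3k-3)$.

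So the first step is to tighten the loop $\gamma$ representing $a$ in $G$ and split into two cases. \textbf{Case A:} some edge $e$ of $G$ is not crossed by $\gamma$. Then $\gamma$ crosses $e$ exactly $0\le m$ times, and Lemma \ref{L1} immediately gives $d_{FF_k}(\alpha,\pi(G))\le 13$. \textbf{Case B:} every edge of $G$ is crossed by $\gamma$. Then $G$ has no valence-one vertex (its pendant edge would be uncrossed), and after erasing valence-two vertices — which changes neither the volume, nor $\ell_G(a)$, nor the coarse projection $\pi(G)$ (up to the Bestvina--Feighn coarseness constant $4$), and leaves every edge crossed by $\gamma$ — the resulting graph $G'$ has $E\le 3k-3$ edges. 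Let $e^{\ast}$ be an edge of $G'$ of maximal length, so $\ell_{G'}(e^{\ast})\ge 1/E\ge 1/(3k-3)$. Writing $n_e$ for the number of times $\gamma$ crosses $e$ in $G'$, we get $n_{e^{\ast}}\,\ell_{G'}(e^{\ast})\le \sum_e n_e\,\ell_{G'}(e)=\ell_{G'}(a)\le m$, hence $n_{e^{\ast}}\le (3k-3)m$. Since $\gamma$ crosses $e^{\ast}$ (at most $(3k-3)m$ times), Lemma \ref{L1} yields $d_{FF_k}(\alpha,\pi(G'))\le 6(3k-3)m+13$, and therefore $d_{FF_k}(\alpha,\pi(G))\le 6(3k-3)m+17$.

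Combining the two cases, $d_{FF_k}(\alpha,\pi(G))\le (18k-18)m+17$, so the lemma holds with $A=18k-18$ and $B=17$, both depending only on $k$. This is a bookkeeping argument, and the only point requiring care is the case division itself: one must verify that the long edge $e^{\ast}$ is genuinely on the tightened loop before invoking Lemma \ref{L1}, which is precisely why Case A (no uncrossed edge) is separated out first, and one must check that passing from $G$ to its reduced form $G'$ does not corrupt either $\ell_G(a)$ or $\pi(G)$ in an uncontrolled way.
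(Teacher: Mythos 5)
Your proof is correct and takes essentially the same approach as the paper: pick the longest edge, bound its length from below by a constant depending only on $k$, deduce the tightened loop crosses it at most $O(m)$ times, and invoke Lemma~\ref{L1}. The case distinction you introduce is superfluous, though: Lemma~\ref{L1} applies to any edge crossed at most $m$ times, \emph{including} zero times, so the single-shot argument (if $e^\ast$ is the longest edge then $n_{e^\ast}\ell(e^\ast)\le\sum_e n_e\ell(e)=\ell_G(a)\le m$, hence $n_{e^\ast}\le m/\ell(e^\ast)$) covers the uncrossed case with no extra work, and points of $CV_k$ already have no valence-one or valence-two vertices, so the reduction to $G'$ is not needed either.
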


\begin{proof} Let $e$ be the edge of $G$ with the longest length.
Hence, $\ell(e)\geq 1/(3k+3)$. Then, $\alpha$ crosses $e$ less than $(3k+3)m$ times. Therefore, by Lemma \ref{L1}, $d_{FF_k}({\alpha},\pi(G))\leq 6(3k+3)m+13$. Here, $A= 6(3k+3)$ clearly depends only on the rank of the free group.
\end{proof}

\subsection{Proof of Theorem \ref{diam}}

\begin{proof}[Proof of Theorem 1.2] Given $x \in F_C$, let us assume that $G\in CV_k$ is a point which is projected to $x$. We will write $\pi(G)=x$. Let $\tau$ be the essential imbedded torus in $M$ corresponding to the given $\ZZ$--splitting $T$.

Let $n$ be such that $d_{FF_k}(G, D_T^n(G))\leq C$. Up to replacing $C$ by a constant we can assume that $n\geq 4$.
By Theorem \ref{twistlemma}, we have $tw_{\tau} \big{(}G , D^n_{\tau}(G) \big{)} \geq n-1$. Since $tw_{\tau} \big{(}G , D^n_{\tau}(G) \big{)}=tw_{a} \big{(}G , D^n_{T}(G) \big{)}$ where $a\in F_k$ represents the core curve of the torus $\tau$, we can use Clay-Pettet Theorem \ref{clayp2}. Thus there is a folding path, ${G_t: [0,d]\rightarrow CV_k}$ such that $G_0=G$, $G_{d}=D_{\t}^{n}(G)$ and $a$ gets short along the geodesic $\{G_t\}_t$; for some $t \in [0,d]$,  $\ell_{G_t}(a)\leq 1/(n-3)$.

Now, by \cite{BestHyp} and \cite{KaRa}, the projection $\pi(\{G_{t}\})$ of the folding path  $\{G_t\}_{t}$ onto $FF_k$ is a quasi-geodesic in $FF_k$ between $\pi(D^n_{\tau}(G))$ and $\pi(G)=x$. Let $\alpha$ be the smallest free factor containing $a$.

Then, since $n \geq 4$ we use Lemma \ref{L2} to deduce that
\begin{center}
$d_{FF_k}({\alpha}, \pi(G_t))\leq \frac{A}{n-3}+13$
\end{center}
where $A$ is  as it is given in the same lemma. Since $\pi(\{G_{t}\})$ is uniformly Hausdorff-close to a geodesic and $d_{FF_k}(\pi(G), \pi(D^n_{\tau}(G)))\leq C$, by triangle inequality we have
\begin{center}
$d_{FF_k}(\pi(G), {\alpha})\leq \frac{A}{n-3}+C+H+13\leq A+C+H+13$
\end{center}
where $H=H(k)$ is the distance between the geodesic and unparametrized quasi geodesic $\pi(\{G_{t}\})$.
Hence we have 
\[C'=2(A+C+H+13)\].

\end{proof}

\section{Constructing Fully Irreducibles}

In this section we will prove the main theorem of this paper.
\begin{twisttheorem} Let $T_1$ and $T_2$ be two $\mathbb{Z}$--splittings of the free group $F_k$ with rank $k>2$ and $\alpha_1$ and $\a_2$ be two corresponding free factors in the free factor complex $FF_k$ of the free group $F_k$. Let $D_{1}$ be a Dehn twist fixing $\alpha_1$ and $D_{2}$ a Dehn twist fixing $\alpha_2$, corresponding to $T_1$ and $T_2$, respectively. Then there exists a constant $N=N(k)$ such that whenever $d_{FF_k}(\alpha_1, \a_2) \geq N$,
\begin{enumerate}
\item $\langle D_{1}, D_{2}\rangle \simeq F_2$.
\item All elements of $\langle D_{1}, D_{2}\rangle$ which are not conjugate to the powers of $D_{1}, D_{2}$ in $\langle D_{1}, D_{2}\rangle$  are fully irreducible.
\end{enumerate}
\end{twisttheorem}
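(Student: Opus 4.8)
The plan is to deduce Theorem~\ref{twist} directly from the abstract ping-pong Theorem~\ref{gen} by verifying its hypotheses for the action of $\Out(F_k)$ on the free factor complex $FF_k$, with $\phi_1 = D_1$ and $\phi_2 = D_2$. First I would recall that $FF_k$ is $\delta$--hyperbolic for some $\delta = \delta(k)$ by \cite{BestHyp}, so there is a meaningful constant $100\delta$ to compare against; I would fix any $C > 100\delta$ that is also large enough to invoke Theorem~\ref{diam}. The next step is to identify the almost fixed sets: by definition $\calX_C(D_i) = F_C(D_i)$ in the notation of the introduction, and Theorem~\ref{diam} gives a constant $C' = C'(C,k)$ bounding the diameter of each $F_C(D_i)$. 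Thus both diameter hypotheses of Theorem~\ref{gen} hold with the \emph{same} $C'$, and $C_1 = C_1(\delta, C, C')$ emerges as a function of $k$ alone.

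\textbf{From the hypothesis $d_{FF_k}(\alpha_1,\alpha_2) \ge N$ to $d_{FF_k}(\calX_C(D_1),\calX_C(D_2)) \ge C_1$.} The key geometric point is that each $\alpha_i$ lies within bounded distance of the almost fixed set $\calX_C(D_i)$. Indeed, $D_i$ fixes the conjugacy class of the free factor $\alpha_i$ (this is how $D_i$ was chosen relative to $T_i$), so $\alpha_i \in \calX_C(D_i)$ outright, giving distance $0$. Combining this with the diameter bound $C'$ from Theorem~\ref{diam} and the triangle inequality, one gets
\[
d_{FF_k}(\alpha_1,\alpha_2) \;\le\; d_{FF_k}(\calX_C(D_1),\calX_C(D_2)) + 2C'.
\]
Hence setting $N := C_1 + 2C'$ ensures that $d_{FF_k}(\alpha_1,\alpha_2) \ge N$ forces $d_{FF_k}(\calX_C(D_1),\calX_C(D_2)) \ge C_1$, which is exactly the trigger hypothesis of Theorem~\ref{gen}. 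Note $N = N(k)$ since $\delta$, $C$, $C'$, $C_1$ all depend only on $k$.

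\textbf{Conclusion.} With the hypotheses of Theorem~\ref{gen} satisfied, part (1) gives $\langle D_1, D_2 \rangle \simeq F_2$ immediately. For part (2), Theorem~\ref{gen}(2) says every element $g \in \langle D_1, D_2\rangle$ that is not conjugate (within the free group $\langle D_1, D_2\rangle$) to a power of $D_1$ or $D_2$ acts loxodromically on $FF_k$. It then remains to translate ``acts loxodromically on $FF_k$'' into ``is fully irreducible'': this is precisely the characterization of Bestvina--Handel \cite{BestHyp} (combined with the fact that an outer automorphism is fully irreducible if and only if it acts loxodromically on the free factor complex). So any such $g$ is fully irreducible, completing the proof.

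\textbf{Main obstacle.} The only genuine work has already been front-loaded into Theorem~\ref{diam} (the bounded-diameter statement for almost fixed sets, whose proof relies on the relative twisting estimate of Theorem~\ref{twistlemma} and the Clay--Pettet folding-path Theorem~\ref{clayp2}) and into Theorem~\ref{gen} (the abstract elliptic-type ping-pong). Granting those, the present deduction is essentially bookkeeping: the subtle points to get right are (i) confirming $\alpha_i$ actually lies in $\calX_C(D_i)$ rather than merely near it — which holds because $D_i$ fixes $\alpha_i$, so any $n \ne 0$ witnesses membership with $d(\alpha_i, D_i^n(\alpha_i)) = 0 \le C$ — and (ii) checking that all constants genuinely depend only on $k$ and not on the particular splittings $T_1, T_2$, which follows since Theorems~\ref{diam} and \ref{gen} provide uniform constants.
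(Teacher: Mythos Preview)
Your proposal is correct and follows essentially the same route as the paper: fix $C>100\delta$, invoke Theorem~\ref{diam} to bound the diameters of the almost fixed sets by $C'$, note that $\alpha_i\in F_C(D_i)$ since $D_i$ fixes $\alpha_i$, and set $N=2C'+C_1$ so that Theorem~\ref{gen} applies and loxodromic action on $FF_k$ yields full irreducibility. The only cosmetic discrepancies are that in the paper $\calX_C(\phi)$ denotes the quasiconvex hull of the almost fixed set $F_C(\phi)$ (not the set itself), and the loxodromic-iff-fully-irreducible characterization is due to Bestvina--Feighn rather than Bestvina--Handel; neither affects the argument.
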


We will start with some basic definitions and lemmata, which are standard for $\delta$--hyperbolic spaces.
\newline

Let $(X,d)$ be a metric space. For $x,y,z \in X$, the \textit{Gromov product} $(y,z)_x$ is defined as
\[ (y,z)_x:=\frac12 (d(y,x)+d(z,x)-d(y,z)).
\]

If $(X,d)$ is a $\delta$-hyperbolic space, the initial segments of length of $(y,z)_x$ of any two geodesics $[x,y]$ and $[x,z]$ stay close to each other. Namely, they are in $2\delta$-neighborhoods of each other. Hence, the Gromov product measures for how long two geodesics stay close together. This will be the characterization of the $\delta$- hyperbolicity we will use in our work as definition of being $\delta$-- hyperbolic.

Also, in this case, the Gromov product  $(y,z)_x$ approximates the distance between $x$ and the geodesic $[y,z]$ within $2\delta$:
\[
(y,z)_x \leq d(x, [y,z])\leq (y,z)_x+ 2\delta
\]
\begin{definition}A path $\sigma: I \rightarrow X$ is called a $(\lambda, \epsilon)$--quasi geodesic if $\sigma$ is parametrized by arc-length and for any $s_1$, $s_2 \in I$ we have
\[
  |s_1-s_2|\leq \lambda d(\sigma(s_1), \sigma(s_2))+\epsilon
\]
If the restriction of $\sigma$ to any subsegment $[a,b] \subset I$ of length at most $L$ is a $(\lambda, \calL)$ quasigeodesic, then we call $\sigma$ an $\ell$--local  $(\lambda, \calL)$--quasigeodesic.
\end{definition}

Let $X$ be a geodesic metric space and $Y\subset X$. We say that $Y$ is $c$--quasiconvex if for all $y_1, y_2 \in Y$ the geodesic segment $[y_1, y_2]$ lies in the $c$-- neighborhood of $Y$.

For any $x\in X$ we call $p_x\in Y$ an \textit{$\epsilon$-quasi projection } of $x$ onto $Y$ if
\[
  d(x,p_x)\leq d(x,Y)+ \epsilon.
\]
In hyperbolic spaces, quasi projections onto quasiconvex sets are \textit{quasiunique}:

\begin{lemma}\label{quasiPro} Let $X$ be $\delta$--hyperbolic metric space and $Y \subset X$ $c$--quasiconvex. Let $x\in X$, and $p_x$ and $p_{x{'}}$ be two $\epsilon$--quasi projections of $x$ onto $Y$. Then,
\[
  d(p_x, p_{x'})\leq 2c+4\delta+ 2\epsilon. \]

\end{lemma}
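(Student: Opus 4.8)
The final statement to prove is Lemma~\ref{quasiPro}: quasi-uniqueness of quasi-projections onto a quasiconvex set in a $\delta$-hyperbolic space. Let me sketch a proof plan.

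The plan is to argue directly from the thin-triangles/Gromov-product characterization of $\delta$-hyperbolicity stated just above, using the fact that $Y$ is $c$-quasiconvex. First I would set $r = d(x,Y)$, so that $d(x,p_x) \le r+\epsilon$ and $d(x,p_{x'}) \le r+\epsilon$ by definition of an $\epsilon$-quasi projection. The goal is to bound $d(p_x,p_{x'})$, so I would consider the geodesic triangle with vertices $x$, $p_x$, $p_{x'}$, together with the geodesic $[p_x,p_{x'}]$, which by $c$-quasiconvexity of $Y$ stays within the $c$-neighborhood of $Y$.

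The key estimate is to control the Gromov product $(p_x,p_{x'})_x$. On the one hand, any point $m$ on the geodesic $[p_x,p_{x'}]$ lies within $c$ of some point of $Y$, hence $d(x,m) \ge d(x,Y) - c = r - c$; taking $m$ to be a point realizing $d(x,[p_x,p_{x'}])$ and using the Gromov-product approximation $(p_x,p_{x'})_x \ge d(x,[p_x,p_{x'}]) - 2\delta$ gives $(p_x,p_{x'})_x \ge r - c - 2\delta$. On the other hand, from the definition of the Gromov product, $(p_x,p_{x'})_x = \tfrac12\big(d(x,p_x) + d(x,p_{x'}) - d(p_x,p_{x'})\big) \le \tfrac12\big(2(r+\epsilon) - d(p_x,p_{x'})\big) = r + \epsilon - \tfrac12 d(p_x,p_{x'})$. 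Combining the two inequalities yields $r - c - 2\delta \le r + \epsilon - \tfrac12 d(p_x,p_{x'})$, i.e. $\tfrac12 d(p_x,p_{x'}) \le c + 2\delta + \epsilon$, so $d(p_x,p_{x'}) \le 2c + 4\delta + 2\epsilon$, which is exactly the claimed bound.

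I expect the only subtlety — really the main point requiring care rather than a genuine obstacle — is making sure the quasiconvexity of $Y$ is used correctly to get the lower bound $d(x,[p_x,p_{x'}]) \ge r - c$: one needs that every point of the geodesic segment $[p_x,p_{x'}]$ is within distance $c$ of $Y$ (not merely its endpoints), which is precisely the definition of $c$-quasiconvexity, and then that $d(x,y) \ge d(x,Y) \ge r$ for $y \in Y$. Everything else is a direct manipulation of the Gromov product and the inequality $(y,z)_x \le d(x,[y,z]) \le (y,z)_x + 2\delta$ recorded above. No BGIT, folding lines, or sphere-system machinery is needed here; this is purely a coarse-geometry lemma that will later be applied with $X = FF_k$ and $Y$ an almost-fixed set (after that set is shown quasiconvex).
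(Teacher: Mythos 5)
Your proof is correct. One should note, however, that the paper does not actually supply a proof of this lemma; it is stated as a standard fact about $\delta$-hyperbolic spaces and used without further argument. Your Gromov-product computation is a clean and standard derivation: set $r=d(x,Y)$, bound $d(x,[p_x,p_{x'}])\ge r-c$ from the quasiconvexity of $Y$ (every point of $[p_x,p_{x'}]$ lies within $c$ of $Y$, hence within $c$ of some $y\in Y$ at distance $\ge r$ from $x$), convert that to a lower bound $(p_x,p_{x'})_x\ge r-c-2\delta$ via the inequality $d(x,[y,z])\le (y,z)_x+2\delta$, and combine with the upper bound $(p_x,p_{x'})_x\le r+\epsilon-\tfrac12 d(p_x,p_{x'})$ coming from the definition of an $\epsilon$-quasi projection. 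All constants match the stated bound exactly, and the subtlety you flagged (needing quasiconvexity along the whole segment, not just at the endpoints) is handled correctly. No gaps.
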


For a $\delta$ hyperbolic geodesic $G$-space $X$, consider the almost fixed set $X_C(g)$ corresponding to a subgroup $\la g \ra $ for $g\in G$. Then, the \textit{quasi convex hull}  $\calX _C(g)$ of $X_C(g)$ is defined to be the union of all geodesics connecting any two points of $X_C(g)$. From now on we will work with quasi convex hulls of almost fixed sets.
The following is a standard for $\delta$- hyperbolic spaces.

\begin{lemma}[\cite{KWeid1} Lemma 3.9] $\calX_C(g)$ is  $g$-invariant  and $4\delta$-quasiconvex.
\end{lemma}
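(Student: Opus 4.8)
The plan is to prove the two assertions in turn, both by soft arguments: $g$-invariance comes directly from the fact that $g$ acts by isometries, and $4\delta$-quasiconvexity from two nested applications of the slimness of geodesic triangles.

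First I would verify that the almost fixed set $X_C(g)$ is itself $g$-invariant. If $x \in X_C(g)$, choose $n \neq 0$ with $d(x, g^n x) \leq C$; since $g$ is an isometry, $d(gx, g^n(gx)) = d(gx, g(g^n x)) = d(x, g^n x) \leq C$, so $gx \in X_C(g)$, and running the same computation with $g^{-1}$ in place of $g$ gives $g^{-1}x \in X_C(g)$. Hence $g X_C(g) = X_C(g)$. Since $\calX_C(g)$ is by definition the union of the geodesic segments $[u,v]$ over all pairs $u,v \in X_C(g)$, and $g$ carries each such segment to the geodesic segment $[gu,gv]$ with $gu,gv \in X_C(g)$, we get $g\calX_C(g) \subseteq \calX_C(g)$; applying this to $g^{-1}$ as well yields equality. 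That settles $g$-invariance.

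For $4\delta$-quasiconvexity, take $p,q \in \calX_C(g)$; I must show the geodesic $[p,q]$ lies in the $4\delta$-neighborhood of $\calX_C(g)$. By construction there are $a,b,c,d \in X_C(g)$ and geodesics with $p \in [a,b]$, $q \in [c,d]$, and every geodesic between two of the points $a,b,c,d$ lies in $\calX_C(g)$ (in particular the chosen $[a,b]$, $[c,d]$ and some $[a,c]$). The first step is to apply triangle-slimness to $\triangle(p,a,c)$, noting that $[p,a] \subseteq [a,b]$: with the form of $\delta$-hyperbolicity recorded in the excerpt, each side of a geodesic triangle lies in the $2\delta$-neighborhood of the union of the other two, so $[p,c]$ lies in the $2\delta$-neighborhood of $[a,b] \cup [a,c] \subseteq \calX_C(g)$. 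The second step applies slimness to $\triangle(p,c,q)$, where $[c,q] \subseteq [c,d]$: then $[p,q]$ lies in the $2\delta$-neighborhood of $[p,c] \cup [c,d]$. Composing the two estimates, $[p,q]$ lies in the $2\delta$-neighborhood of a $2\delta$-neighborhood of $\calX_C(g)$, hence in its $4\delta$-neighborhood, which is the claim.

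I do not anticipate a genuine obstacle; the only thing needing attention is bookkeeping of constants — one should confirm that the stated characterization of $\delta$-hyperbolicity really yields the $2\delta$ triangle-slimness used above, so that the two nested applications land on exactly $4\delta$ rather than a worse constant, after which the asserted bound follows.
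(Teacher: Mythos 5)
Your proof is correct, and it is the standard argument: the paper itself gives no proof of this statement, importing it verbatim as Lemma 3.9 of \cite{KWeid1}, so there is nothing in the text to compare against. Both halves of your argument ($g$-invariance of $X_C(g)$ from the isometric action plus the fact that $\calX_C(g)$ is the union of \emph{all} geodesics between points of $X_C(g)$, and $4\delta$-quasiconvexity from two nested applications of $2\delta$-slimness via the triangles $\triangle(p,a,c)$ and $\triangle(p,c,q)$) are sound; the only caveat is the one you already flagged, namely that the exact constant $4\delta$ depends on which normalization of $\delta$-hyperbolicity one fixes, which is harmless here since the constant is inherited from \cite{KWeid1} in any case.
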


The following lemma appears as Lemma 3.12 in \cite{KWeid1}:
\begin{lemma}\label{L3}
Let $(X,d)$ be a $\delta$--hyperbolic space and let $[x_p, x_q]$ be a geodesic segment in $X$. Let $p,q\in X$ be such that $x_p$ is a projection of $p$ on $[x_p, x_q]$ and that $x_q$ is a projection of $q$ on $[x_p, x_q]$. Then if $d(x_p,x_q)>100\delta$ , the path $[p,x_p]\cup [x_p, x_q]\cup [x_q, q]$ is a $(1, 30 \delta)$--quasigeodesic.
\end{lemma}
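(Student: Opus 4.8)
Write $A=d(p,x_p)$, $B=d(x_p,x_q)$, $C=d(x_q,q)$, and parametrize the concatenation $\gamma=[p,x_p]\cup[x_p,x_q]\cup[x_q,q]$ by arc length. Since $\gamma$ is a path, $d(\gamma(s_1),\gamma(s_2))\le|s_1-s_2|$ is automatic, so the whole content of the lemma is the reverse bound $|s_1-s_2|\le d(\gamma(s_1),\gamma(s_2))+30\delta$, i.e.\ \emph{the arc length between two points of $\gamma$ exceeds their distance by at most $30\delta$}. When the two points lie on a single one of the three constituent geodesics this is an equality, so only the ``mixed'' configurations matter, and the first thing I would do is reduce all of those to three endpoint estimates:
\[
 d(p,x_q)\ \ge\ A+B-4\delta,\qquad d(q,x_p)\ \ge\ C+B-4\delta,\qquad d(p,q)\ \ge\ A+B+C-30\delta .
\]
For instance, if $u\in[p,x_p]$ and $v\in[x_p,x_q]$, then $d(p,x_q)\le d(p,u)+d(u,v)+d(v,x_q)$; substituting $d(p,u)=A-d(u,x_p)$ and $d(v,x_q)=B-d(x_p,v)$ and using the first estimate gives $d(u,v)\ge d(u,x_p)+d(x_p,v)-4\delta$, which is precisely the arc length along $\gamma$ between $u$ and $v$ minus $4\delta$. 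The configuration $u\in[x_p,x_q]$, $v\in[x_q,q]$ is symmetric, and the configuration $u\in[p,x_p]$, $v\in[x_q,q]$ is handled in the same way from $d(p,q)\le d(p,u)+d(u,v)+d(v,q)$ and the third estimate, this time losing $30\delta$. So the whole lemma comes down to the three displayed inequalities.

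\textbf{The two ``$4\delta$'' estimates.} These follow from the nearest-point projection property: if $x_p$ is a closest point of $[x_p,x_q]$ to $p$, then $(p,z)_{x_p}\le 2\delta$ for every $z\in[x_p,x_q]$. Indeed, if $(p,z)_{x_p}=t$, then the points at distance $t$ from $x_p$ on $[x_p,p]$ and on the sub-geodesic of $[x_p,x_q]$ running from $x_p$ to $z$ lie within $2\delta$ of one another (the fellow-travelling form of $\delta$-hyperbolicity recorded before the statement), which produces a point of $[x_p,x_q]$ within $d(p,x_p)-t+2\delta$ of $p$; minimality of $d(p,x_p)$ then forces $t\le 2\delta$. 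Taking $z=x_q$ yields $d(p,x_q)=A+B-2(p,x_q)_{x_p}\ge A+B-4\delta$, and symmetrically $d(q,x_p)\ge C+B-4\delta$.

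\textbf{The main estimate $d(p,q)\ge A+B+C-30\delta$.} Here, and only here, the hypothesis $B>100\delta$ is used. Consider the geodesic quadrilateral with vertices $p,x_p,x_q,q$, which is uniformly slim. I would first show that every $w\in[x_p,x_q]$ lying at distance more than a bounded multiple of $\delta$ from both $x_p$ and $x_q$ is $O(\delta)$-close to $[p,q]$: if instead such a $w$ were close to $[p,x_p]$, then on the one hand $d(p,w)\ge d(p,[x_p,x_q])=A$, while projecting through the nearby point of $[p,x_p]$ gives $d(p,w)\le A+O(\delta)$; combining these with $d(p,w)\ge A+d(x_p,w)-4\delta$ (the projection property) bounds $d(x_p,w)$ by $O(\delta)$, a contradiction, and symmetrically $w$ cannot be close to $[x_q,q]$. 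Since $B>100\delta$, a sub-segment of $[x_p,x_q]$ of length $>B-O(\delta)$ is thus $O(\delta)$-close to $[p,q]$; pick reference points $w_1,w_2$ near its two ends and points $z_1,z_2\in[p,q]$ close to them. The delicate step is to pin down the \emph{order} of $z_1,z_2$ along $[p,q]$: one checks that $z_1$ (near the $x_p$-end) must separate $p$ from $z_2$, since otherwise $d(p,z_1)$ would be forced to exceed $A+O(\delta)$ by roughly $2B$, contradicting $d(p,z_1)\le A+O(\delta)$ — and it is exactly the inequality $B>100\delta$ that rules this out. Once the order is known, $d(p,q)=d(p,z_1)+d(z_1,z_2)+d(z_2,q)$, where $d(p,z_1)\ge A-O(\delta)$ and $d(z_2,q)\ge C-O(\delta)$ by the projection property and $d(z_1,z_2)\ge B-O(\delta)$ because $w_1,w_2$ sit near the ends of a long sub-segment of $[x_p,x_q]$. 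A careful accounting of these error terms (as carried out in \cite{KWeid1}) collapses to the constant $30\delta$.

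\textbf{Main obstacle.} The only real difficulty is the ordering step above: without a lower bound on $d(x_p,x_q)$ the path $\gamma$ can genuinely backtrack — e.g.\ when $p$ and $q$ lie on opposite sides of a very short $[x_p,x_q]$ — and then it is not a quasigeodesic at all, so the slimness and projection estimates must be combined in just the right way to convert $d(x_p,x_q)>100\delta$ into the correct cyclic order of $z_1,z_2$. Everything else is routine $\delta$-hyperbolic bookkeeping together with the triangle-inequality reduction of the first paragraph.
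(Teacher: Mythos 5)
The paper does not prove this lemma --- it is quoted verbatim from Kapovich--Weidmann \cite{KWeid1} (their Lemma 3.12) --- so there is no in-paper argument to compare against; your proof must stand on its own, and it does. The reduction to the three endpoint inequalities is exactly right, the two $4\delta$ estimates via the Gromov-product bound $(p,z)_{x_p}\le 2\delta$ for nearest-point projections are correct, and the main estimate goes through as sketched: taking $w_1,w_2$ on $[x_p,x_q]$ at distance about $7\delta$ from the endpoints, the exclusion argument shows they are $2\delta$-close to $[p,q]$, and the ordering step uses $B>100\delta$ precisely as you describe (a wrong order forces $d(p,z_1)\ge A+2B-O(\delta)$ against $d(p,z_1)\le A+O(\delta)$). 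Carrying out the bookkeeping with these choices yields $d(p,q)\ge A+B+C-16\delta$ with a $2\delta$-slim quadrilateral, comfortably inside the stated $30\delta$, so the only loosely written step (the final ``careful accounting'') does check out. This is essentially the standard argument and the one in \cite{KWeid1}.
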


In the proof of Theorem \ref{gen} we use the Lemma \ref{L3} which assumes that the projections onto almost fixed sets exist. However, given a geodesic metric space $X$ and $Y\subset X$
when $Y$ is not closed in $X$, the closest point projection onto $Y$ may not exist. To fix this we will use quasi-projections which exist quasi-uniquely when there is quasi convexity, by Lemma \ref{quasiPro}.

Now we will state and prove the following theorem which essentially proves the Theorem \ref{twist}.

\begin{gentheorem}Let $G$ be a group acting  on a $\delta$-hyperbolic metric space $X$ by isometries and $\phi_1, \phi_2 \in G$. Suppose $C>100 \delta$ and the almost fixed sets  $\calX_C(\phi_1)$ and $\calX_C(\phi_2)$ of $\langle \phi_1 \rangle$ and $ \langle \phi_2 \rangle$ respectively have diameters bounded above by a constant $C{'}$. Then there exists a constant $C_1$ such that, whenever  $d_X(\calX_C(\phi_1),\calX_C(\phi_2)) \geq C_1$,
\begin{enumerate}
\item $\langle \phi_1, \phi_2 \rangle \simeq F_2$ and,

\item every element of $\langle {\phi_1}, {\phi_2}\rangle $ which is not conjugate to the powers of $\phi_1, \phi_2$ in $\langle {\phi_1}, {\phi_2}\rangle $ acts loxodromically in $X$.
\end{enumerate}
\end{gentheorem}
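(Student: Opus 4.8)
The plan is to run a Ping-Pong argument of the ``elliptic type'' in the sense of Kapovich--Weidmann \cite{KWeid1}, using as ping-pong regions suitable neighborhoods of the two quasiconvex hulls $\calX_C(\phi_1)$ and $\calX_C(\phi_2)$. First I would set $Y_i = \calX_C(\phi_i)$, which by the earlier lemmas is $\phi_i$-invariant, $4\delta$-quasiconvex, and of diameter $\leq C'$; fix $\epsilon$-quasiprojections $p_i\colon X\to Y_i$ (quasi-unique by Lemma~\ref{quasiPro}) and basepoints $o_i\in Y_i$. The key quantitative input is this: if $g$ lies in the cyclic group $\langle\phi_i\rangle$ and is \emph{not} the identity, then $d(x,g\cdot x)>C$ for every $x$ with $p_i(x)$ close to $Y_i$ outside the bounded set $Y_i$; more precisely, for any $x\in X$ whose quasiprojection to $Y_i$ is pinned, $\phi_i^n$ with $n\ne 0$ moves the whole ``shadow'' of $x$ off $Y_i$ by a definite amount once one leaves the $C'$-bounded core. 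I would make this precise by combining the definition of $\calX_C(\phi_i)$ (points moved $\leq C$ by some power land in $X_C(\phi_i)$, hence within $C'$ of $o_i$) with Lemma~\ref{L3}: if $x$ projects to $p_i(x)\in Y_i$ and $y$ projects to $p_i(y)\in Y_i$ with $d(p_i(x),p_i(y))>100\delta$, then $[x,p_i(x)]\cup[p_i(x),p_i(y)]\cup[p_i(y),y]$ is a $(1,30\delta)$-quasigeodesic, so distances add up coarsely.

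The main step is to choose $C_1$ large enough (depending only on $\delta$, $C$, $C'$) so that the following holds: writing $N_i$ for the $(C_1/4)$-neighborhood of $Y_i$ in $X$, we have $N_1\cap N_2=\emptyset$ (using the hypothesis $d_X(Y_1,Y_2)\geq C_1$), and for every nontrivial $g\in\langle\phi_i\rangle$ one has $g\cdot(X\setminus N_i)\subseteq N_i$. Granting these, the standard table-tennis lemma for two sub\emph{groups} (as opposed to two elements) gives that $\langle\phi_1,\phi_2\rangle$ is the free product $\langle\phi_1\rangle * \langle\phi_2\rangle$; since each $\phi_i$ has infinite order in $G$ (if some $\phi_i$ had finite order it would fix a point and its almost fixed set would be all of $X$, forcing unbounded diameter, contradiction), each factor is infinite cyclic, so the product is $\ZZ*\ZZ\cong F_2$. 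This proves (1). For the inclusion $g\cdot(X\setminus N_i)\subseteq N_i$ I would argue by contradiction: if $x\notin N_i$ and $\phi_i^n x\notin N_i$ with $n\ne0$, then the quasiprojections $p_i(x)$ and $p_i(\phi_i^n x)=\phi_i^n p_i(x)$ are two points of $Y_i$ at distance $\leq C'$, while the concatenated path through them from $x$ to $\phi_i^n x$ is a $(1,30\delta)$-quasigeodesic; but then $d(x,\phi_i^n x)$ is bounded in terms of $C'$ and $\delta$, so if $C>C'+30\delta+\text{(const)}$ we would get $x\in X_C(\phi_i)\subset Y_i$, contradicting $x\notin N_i$.

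For (2), take $w\in\langle\phi_1,\phi_2\rangle$ not conjugate into $\langle\phi_1\rangle$ or $\langle\phi_2\rangle$. By the structure of free products, after conjugation $w$ is cyclically reduced of syllable length $\geq 2$, i.e.\ $w=\phi_{i_1}^{n_1}\cdots\phi_{i_m}^{n_m}$ with $i_1\ne i_m$, all $n_j\ne 0$. The ping-pong dynamics then show that $w$ maps $N_{i_m}$ into $N_{i_1}$ and, iterating $w$, the points $o_{i_m}, w\cdot o_{i_m}, w^2\cdot o_{i_m},\dots$ march off through alternating regions with each step adding at least a definite amount $\kappa=\kappa(\delta,C_1)>0$ to the distance traveled, using again Lemma~\ref{L3} to add up the lengths of the quasigeodesic segments that cross the ``gap'' of width $\geq C_1$ between $N_1$ and $N_2$. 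Hence $n\mapsto d(o_{i_m}, w^n\cdot o_{i_m})$ grows linearly, which is exactly the statement that $w$ acts loxodromically on $X$. I expect the genuine technical obstacle to be the bookkeeping in this last estimate: verifying that the Kapovich--Weidmann conditions apply even though our ``ping-pong sets'' are only quasiconvex rather than convex and the projections are only quasi-unique, so that one must everywhere carry the additive errors $4\delta$, $30\delta$, $\epsilon$, and $C'$ and check that a single choice of $C_1$ (and the already-assumed $C>100\delta$) dominates all of them simultaneously.
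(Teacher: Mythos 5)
Your proposal rests on a classical nesting ping-pong in which, for every nontrivial $g\in\langle\phi_i\rangle$, one has $g\cdot(X\setminus N_i)\subseteq N_i$. That inclusion cannot hold. By the quoted Lemma 3.9 of \cite{KWeid1}, $\calX_C(\phi_i)$ is $\phi_i$-invariant, and $\phi_i^n$ is an isometry of $X$; hence $\phi_i^n$ preserves the metric neighborhood $N_i$ exactly, so $\phi_i^n(X\setminus N_i)=X\setminus N_i$, which is \emph{disjoint} from $N_i$ --- the opposite of what classical nesting requires. There is no attracting region for an elliptic element. Your justification of the inclusion also runs backwards: from the broken path $[x,p_i(x)]\cup[p_i(x),\phi_i^n p_i(x)]\cup[\phi_i^n p_i(x),\phi_i^n x]$ being a $(1,30\delta)$-quasigeodesic (when the middle leg exceeds $100\delta$), one gets $d(x,\phi_i^n x)\geq 2d(x,Y_i)-O(\delta+C')$, which is \emph{unbounded} as $x$ leaves $N_i$. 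The two long outer legs dominate; you do not obtain a bound by $C'+O(\delta)$, and so no contradiction, and no inclusion.

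The paper uses the other branch of the Kapovich--Weidmann dichotomy, the ``elliptic-type'' ping-pong, which dispenses with nesting regions. It picks $p_i\in\calX_C(\phi_i)$ realizing $d(\calX_C(\phi_1),\calX_C(\phi_2))$, proves $d(p_1,gp_1)\geq C>100\delta$ for nontrivial $g\in\langle\phi_1\rangle$ by a minimality argument, and applies Lemma~\ref{L3} to make $[p_2,p_1]\cup[p_1,gp_1]\cup g[p_1,p_2]$ a $(1,30\delta)$-quasigeodesic. For a general alternating word $\omega$, the concatenation of translates of the bridge $[p_1,p_2]$ with segments inside the almost-fixed sets is then an $\ell$-local $(1,30\delta)$-quasigeodesic; setting $C_1=\max\{100\delta,\ell\}$ and invoking the local-to-global principle upgrades it to a global quasigeodesic. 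This yields $d(\omega x,x)$ growing like the syllable length of $\omega$, which gives both freeness and loxodromicity in one step, without any set inclusions. If you want to repair your argument, this concatenation-of-quasigeodesics route (rather than classical nesting) is the way to do it.
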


\begin{proof}

Let $p_{1}\in \calX_C(\phi_1)$ and $p_{2} \in \calX_C(\phi_2)$ be two points such that $d_X(p_1,p_2)=d_X(\calX_C(\phi_1),\calX_C(\phi_2))$.
To prove the theorem we will pick a random word $\omega$  and construct a ping pong argument involving the sets $\calX_C(\phi_1)$ and $\calX_C(\phi_2)$. The goal is to show that the iterates of $ [p_{1}, p_{2}]$ under $\omega$ give a local quasigeodesic, hence a quasigeodesic.
Without loss of generality, for $g \in\langle\phi_1 \rangle $, we will start with proving that the path  $[p_{2}, p_{1}] \cup [p_{1}, gp_{1}] \cup g[p_{1}, p_{2}]$ is a quasigeodesic.

 Let $\pi(p_{2})$ and $\pi(gp_{2})$ be quasi-projections of the points $p_{2}$ and $gp_{2}$ on the geodesic segment $[p_{1}, gp_{1}]$. Then, $p_1$ and $\pi(p_2)$ are both $4\delta$--quasi projections. This is true for $\pi(gp_{2})$ and $gp_1$ also. Since the difference is negligible we will assume $p_1$ and $gp_1$ to be closest point projections.

Now we prove that $d(p_{1},gp_{1})\geq C$. To see this suppose that $d_X(p_{1},gp_{1})<C$. We take a point $x$ on the geodesic segment $[p_1,p_2]$ such that $d(x,p_1)< \epsilon$ where $\epsilon=\displaystyle\frac {C- d(p_{1},gp_{1})}{2}$. Then,
\[
d(x, gx)\leq 2\epsilon+ d(p_{1},gp_{1})=C.
\]
which is a contradiction since $p_1$ is the closest point of $\calX_C(\phi_1)$ to $\calX_C(\phi_2)$. Since the claim is proved we apply Lemma \ref{L3} to conclude that the path $[p_{2}, p_{1}] \cup [p_{1}, gp_{1}] \cup g[p_{1}, p_{2}]$ is a quasigeodesic.

\begin{figure}
\setlength{\unitlength}{0.01\linewidth}
\begin{picture}(100,45)
\put(20,0){\includegraphics[width=0.75\textwidth]{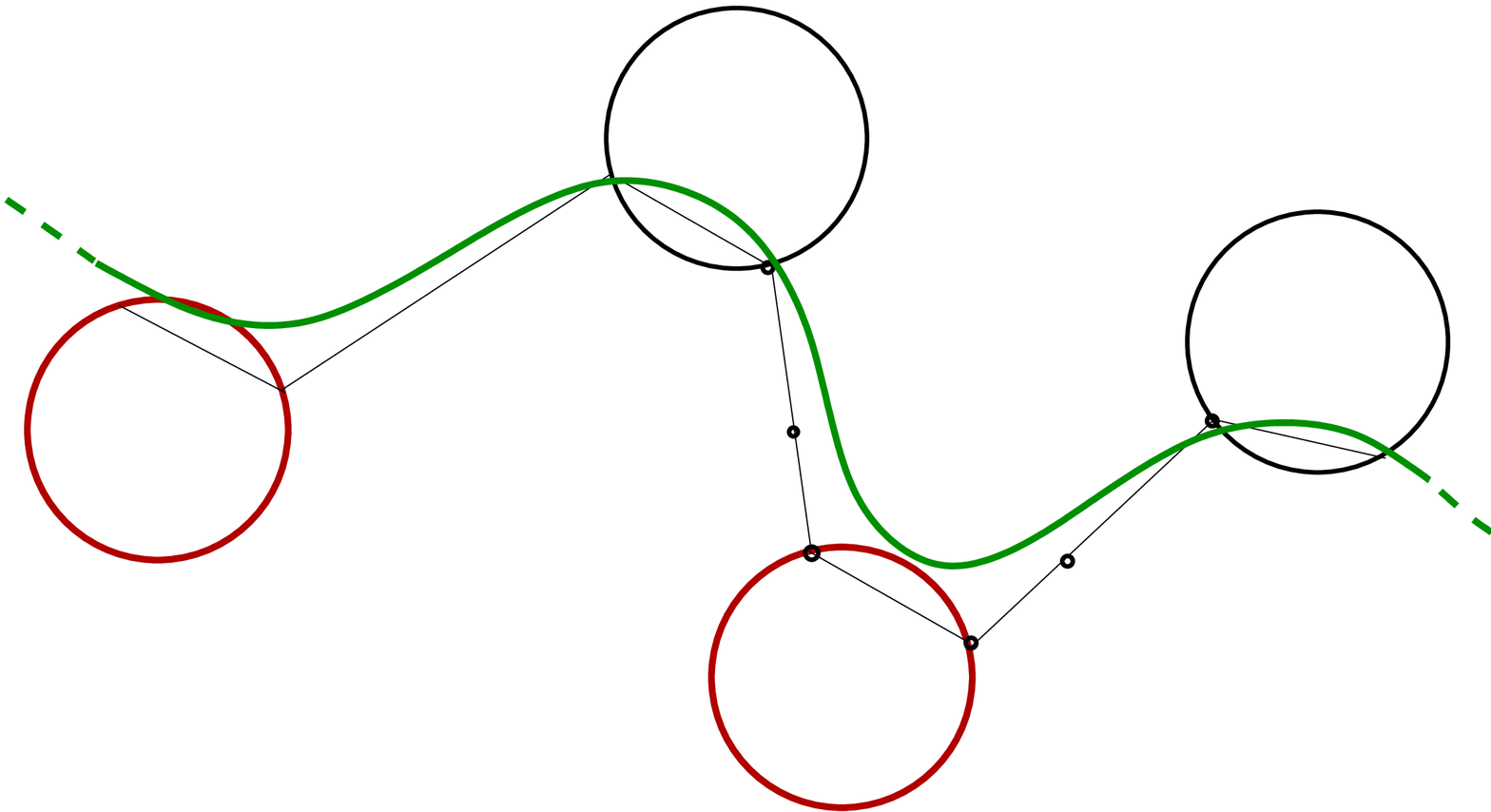}}
\put(54,12){$gp_{1}$}
\put(61,27){$gp_{2}$}
\put(76,20){$p_{2}$}
\put(71,8){$p_{1}$}
\put(75,12){$x$}
\put(57,18){$gx$}
\put(9,17){$\phi_2 (\calX_C({\phi_1}))$}
\put(78,31){$\calX_C({\phi_2})$}
\put(46,5){$\calX_C({\phi_1})$}
\put(42,40){$\phi_1(\calX_C({\phi_2}))$}

\end{picture}
\caption{The ping pong sets and a quasigeodesic between iterated points.}
\label{fig:pp1}
\end{figure}

Now let $p_{1}\in \calX_C(\phi_1)$, $p_{2}\in \calX_C(\phi_2)$ be two points as above and we take a geodesic between them. Such a geodesic, which is called a \textit{bridge} is not unique, however it is \textit{almost unique}, when two sets are sufficiently far apart ([Lemma 5.2, \cite{KWeid1}]). Hence assume that,
\[
d(p_{1}, p_{2})\geq C
.\] 
Since $C>100\delta$ this is sufficient to have a quasiunique bridge. 
Now we take a word $\omega=\phi_2^{m_l}\phi_1^{s_l} \, \cdots \, \phi_2^{m_1}\phi_1^{s_1}$ and consider the iterates of the quasiunique bridge $[p_1, p_2]$ under $\omega$. 

It is known that by the hyperbolicity of $X$, given $(\lambda, \calL)$, there exists $\ell>0$ such that a $\ell$--local $(\lambda,\calL)$--quasigeodesic is a $(\lambda', \calL')$--quasigeodesic where $\lambda'=\lambda'(\lambda, \calL, \ell)$, $L'=L'(\lambda, L, \ell)$. Since we have $(1,30\delta)$--quasigeodesic pieces, we have such $\ell=\ell(30 \delta)$. Hence we let 
 \[ d(p_{1}, p_{2})\geq C_1:=\max\{100 \delta, \ell\}.
 \]
 
 Thus the path $\gamma:= [p_{1}, p_{2}]\cup [p_1, \phi_1^{s_1}p_1]\cup [\phi_1^{s_1}p_{1}, \phi_1^{s_1}p_{2}]\cup [\phi_1^{s_1}p_{2}, \phi_2^{m_1}\phi_1^{s_1}p_{2}]\cup \cdots \cup [\omega{p_1}, \omega{p_2}]$ is a $\ell$--local  $(1,100\delta)$--quasigeodesic, which is a quasigeodesic.

In particular, for any word $\omega$ in $\langle \phi_1, \phi_2 \rangle$ we have $d(\omega(x),x)\geq |\omega|$ where $|\omega|$ denotes the syllable length, up to conjugation. Now, it follows that $\langle \phi_1, \phi_2 \rangle$ is free. Since the path which is obtained by iterating any segment between two almost fixed sets under $\omega$ is a quasigeodesic, $\omega$ is loxodromic.

\end{proof}
\begin{proof}[Proof of Theorem 1.1]
Consider the action of $\Out(F_k)$ on the free factor complex $FF_k$, which is known to be hyperbolic. Then for a sufficiently large constant $C=C(k)$, there exists a $C'=C'(C)$ such that the diameter  of the almost fixed set of a Dehn twist is bounded above by  $C'$, by Theorem \ref{diam}.
Let $C_1$ be the constant from the Theorem \ref{gen}.

Now, assume that $D_{1}$ and $D_{2}$ are the Dehn twists so that $d_{FF_k}(\alpha_1, \alpha_2)\geq 2C{'}+C_1$ where $\alpha_1$ and $\alpha_2$ are the projections of the given $\ZZ$-splittings $T_1$ and $T_2$ to $FF_k$, respectively. Then, since from Theorem \ref{diam} we have $\diam\{F_C\}\leq A+C+H+13 =C'$,
\[ d(F_C(D_1), F_C(D_2)) \geq C_1. \]
Hence, Theorem \ref{gen} applies to $\la D_{1},D_{2}\ra$ with $N= 2 \diam\{F_C\} +C_1 = 2C'+C_1$. It is clear that $N=N(k)$. As a conclusion, since loxodromically acting elements in the free factor complex are fully irreducible, every element from the group $\la D_{1},D_{2}\ra$ is either conjugate to powers of the twists or fully irreducible.
\end{proof}

\section{Constructing atoroidal fully irreducibles}
In this part we prove the following theorem which produces \textit{atoroidal} fully irreducibles. Recall  $FF_k^{(1)}$ is the 1-skeleton of the free factor complex (free factor graph) and that $\sigma: FF_k^{(1)} \rightarrow \calP_k$ is a coarse surjective map and that both graphs have the same vertex sets.

\begin{atotheorem}Let $T_1$ and $T_2$ be two $\mathbb{Z}$--splittings of $F_k$ with $k>2$ with corresponding free factors $\alpha_1$ and $\alpha_2$ and let  $D_{1}$ and $D_{2}$ be two Dehn twists corresponding to $T_1$ and $T_2$, respectively.  Then there exists a constant $N_2=N_2(k)$ such that whenever $d_{\calP_k}(\sigma(\alpha_1), \sigma(\alpha_2)) \geq N_2$ , $\langle D_{_1}, D_{_2}\rangle \simeq F_2$ and all elements from this group which are not conjugate to the powers of $D_{1}, D_{2}$ in $\langle D_{1}, D_{2}\rangle$  are atoroidal fully irreducible.
\end{atotheorem}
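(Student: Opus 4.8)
The plan is to re-run the ping--pong machinery behind Theorem~\ref{twist}, but using the action of $\Out(F_k)$ on the intersection graph $\calP_k$ in place of its action on $FF_k$. Since $\calP_k$ is $\delta$--hyperbolic (\cite{MannTh}), the scheme of the proof of Theorem~\ref{twist} is available verbatim once two things are in hand: a $\calP_k$--analogue of Theorem~\ref{diam} (bounded almost fixed sets of Dehn twists in $\calP_k$), and the fact that, among fully irreducibles, acting loxodromically on $\calP_k$ is equivalent to being atoroidal. Throughout, $\sigma$ denotes the coarsely Lipschitz, $\Out(F_k)$--equivariant composite $FF_k\to FB_k\to\calP_k$ coming from the quasi-isometry of \cite{KaRa} followed by the Lipschitz collapse onto $\calP_k$.

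The first and main step is the $\calP_k$--version of Theorem~\ref{diam}: if $T$ is a $\ZZ$--splitting with core element $c$, associated torus $\tau$, and Dehn twist $D_T=D_\tau$ fixing the free factor $\alpha$, then the almost fixed set $\calX_C(D_T)=\{x\in\calP_k:\exists\,n\neq 0,\ d_{\calP_k}(x,D_T^{\,n}x)\le C\}$ has diameter bounded by some $C_2'=C_2'(C,k)$. This is \emph{not} a formal consequence of Theorem~\ref{diam}: $\sigma$ is only coarsely Lipschitz, so $\calP_k$--distances are only coarsely dominated by $FF_k$--distances, and a priori the $\calP_k$ almost fixed set is strictly larger than the $FF_k$ one; the proof of Theorem~\ref{diam} must be rerun inside $\calP_k$. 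Given $x\in\calX_C(D_T)$, pick $G\in CV_k$ with $\sigma\pi(G)$ coarsely equal to $x$ and $n\neq 0$ with $d_{\calP_k}(x,D_T^{\,n}x)\le C$; enlarging $C$ by a bounded factor we may assume $n\ge 4$. By Theorem~\ref{twistlemma}, $tw_c(G,D_\tau^{\,n}(G))\ge n-1$, so the Clay--Pettet Theorem~\ref{clayp2} gives a folding path from $G$ to $D_T^{\,n}(G)$ along which $\ell(c)\le 1/(n-3)$ at some $G_t$. Here I would invoke (as in \cite{BestHyp}, carried over to $\calP_k$ in \cite{MannTh}) that $\sigma\circ\pi$ sends folding paths to uniform unparametrized quasigeodesics in $\calP_k$; this is the one point where more than elementary coarse geometry is required, and it is what makes the rest of the argument go through. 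Lemma~\ref{L2} bounds $d_{FF_k}(\alpha,\pi(G_t))$ in terms of $1/(n-3)$ and $k$ (note $\langle c\rangle$ is a free factor by Theorems~\ref{she}--\ref{swarup}, so $\alpha$ really is the free factor fixed by $D_T$), and coarse Lipschitzness of $\sigma$ converts this into a bound on $d_{\calP_k}(\sigma(\alpha),\sigma\pi(G_t))$. Since the quasigeodesic $\sigma\pi(\{G_t\})$ has endpoints at $\calP_k$--distance $\le C$ and passes uniformly close to $\sigma(\alpha)$, hyperbolicity of $\calP_k$ forces $x$ into a bounded neighbourhood of $\sigma(\alpha)$; hence $\calX_C(D_T)$ has bounded diameter.

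Granting this, I would conclude just as in the proof of Theorem~\ref{twist}. Apply Theorem~\ref{gen} to $\langle D_1,D_2\rangle$ acting on $\calP_k$: the sets $\calX_C(D_1),\calX_C(D_2)$ have diameter $\le C_2'$ by the previous step, and each contains $\sigma(\alpha_i)$, which $D_i$ fixes up to bounded error (by equivariance of $\sigma$ and $D_i\cdot\alpha_i=\alpha_i$), hence $\sigma(\alpha_i)\in\calX_C(D_i)$. Therefore $d_{\calP_k}(\calX_C(D_1),\calX_C(D_2))\ge d_{\calP_k}(\sigma(\alpha_1),\sigma(\alpha_2))-2C_2'$, so choosing $N_2\ge 2C_2'+C_1$ (with $C_1$ the constant of Theorem~\ref{gen} for $\calP_k$), and also $N_2$ large enough that coarse Lipschitzness of $\sigma$ forces $d_{FF_k}(\alpha_1,\alpha_2)\ge N$ (the constant of Theorem~\ref{twist}), both theorems apply: $\langle D_1,D_2\rangle\simeq F_2$, every element $\omega$ not conjugate into $\langle D_1\rangle$ or $\langle D_2\rangle$ is fully irreducible (from $FF_k$, via Theorem~\ref{twist}), and each such $\omega$ acts loxodromically on $\calP_k$ (from $\calP_k$, via Theorem~\ref{gen}).

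Finally, a fully irreducible automorphism is atoroidal if and only if it is non-geometric (\cite{BH1}), and a geometric fully irreducible, being induced by a pseudo-Anosov of a once-punctured surface, preserves the boundary conjugacy class and acts with bounded orbits on $\calP_k$; equivalently, by \cite{MannTh}, a fully irreducible acts loxodromically on $\calP_k$ exactly when it is atoroidal. Since each $\omega$ above is fully irreducible and acts loxodromically on $\calP_k$, it is an atoroidal fully irreducible, which is the desired conclusion. The main obstacle is the first step: erecting the $\calP_k$ analogues of the folding-path and bounded-geodesic-image apparatus, in particular the quasigeodesic property of projected folding paths in $\calP_k$; once that is granted, the ping--pong and the bookkeeping of constants are a transcription of the $FF_k$ case.
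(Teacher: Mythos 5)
Your proposal reproduces the paper's argument: rerun the diameter bound for almost fixed sets inside $\calP_k$ rather than $FF_k$ (using Theorem~\ref{twistlemma}, Theorem~\ref{clayp2}, the $\calP_k$-version of Lemma~\ref{L2} via the coarsely Lipschitz map $\sigma$, and the fact from \cite{MannTh} that projected folding paths are reparametrized quasigeodesics in $\calP_k$), apply Theorem~\ref{gen} to get $\langle D_1,D_2\rangle\simeq F_2$ with every non-peripheral element loxodromic on $\calP_k$, and invoke \cite{MannTh} to identify loxodromics on $\calP_k$ with atoroidal fully irreducibles. This is precisely the paper's route; the only difference is cosmetic. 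You correctly emphasize that the $\calP_k$ almost fixed set is not the $\sigma$-image of the $FF_k$ one and must be bounded from scratch (the paper does this implicitly), and you add a redundant safety net by also invoking Theorem~\ref{twist} to deduce full irreducibility from the $FF_k$ action --- unnecessary, since \cite{MannTh} already gives that a loxodromic element on $\calP_k$ is an atoroidal fully irreducible in one step, which is exactly how the paper concludes.
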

\begin{proof} To see that $\langle D_{1}, D_{2}\rangle \simeq F_2$ we use the Theorem \ref{gen}. To do that, we need to show that given a $\mathbb{Z}$--splitting $T$ and a constant $C$ there is another constant depending only on $C$ which bounds from above the diameter of the almost fixed set $\calP_C$ of a Dehn twist corresponding to $T$.

Let $x$ be a point in $\calP_C$ and $\sigma\pi(G)=x$ for some $G\in CV_k$. Then, as before, we use the Theorem \ref{clayp2} of Clay and Pettet to obtain a folding line $\{G_t\}_t$ between $G$ and $D^n_{\tau}(G)$ along which $a$ is short, where $a$ is the generator of the fundamental group of the torus $\tau$ in $M$. Then, since distances in the intersection graph are shorter, the Bestvina Feighn lemma \ref{L1} is valid,
\[ d_{\calP_k}(\sigma\alpha, \sigma\pi(G))\leq d_{FF_k}({\alpha},\pi(G))\leq 6m+13.
\]
Hence, we can convert the short length to distance in the intersection graph. Thus there exists constants A and B such that

\[ d_{\calP_k}({\alpha},\pi(G_t))\leq Am+B
\]

where $G_t$ is the point along which $a$ is short. Then the rest of the proof follows the same since
the image of the folding line in  $\calP_k$ is  a quasigeodesic and it is Hausdorff close to a geodesic by \cite{KaRa} and \cite{MannTh}. Hence diameter of $\calP_C$ is uniformly bounded above by a constant, and  Theorem \ref{gen} applies to $\la D_{1},D_{2}\ra$ with $N_2= 2 \diam\{\calP_C\} +C_1$.

Since in $\calP_k$ loxodromically acting automorphisms are atoroidal fully irreducible (\cite{MannTh}), an element of $\langle D_{1}, D_{2}\rangle$ which is not conjugate to the powers of $D_1, D_{2}$ in $\langle D_{1}, D_{2}\rangle$ is an \textit{atoroidal} fully irreducible.

\end{proof}

\bibliographystyle{alpha}
\bibliography{main}
\end{document}